\theoremstyle{plain}
\newtheorem{theorem}{Theorem}
\newtheorem{lemma}{Lemma}
\newtheorem{corollary}{Corollary}
\newtheorem{proposition}{Proposition}
\theoremstyle{definition}
\newtheorem{definition}{Definition}
\newtheorem{example}{Example}
\theoremstyle{remark}
\newtheorem{remark}{Remark}
\DeclareMathOperator{\trace}{Tr}
\DeclareMathOperator{\co}{co}
\DeclareMathOperator{\interior}{int}
\DeclareMathOperator{\dist}{dist}
\DeclareMathOperator{\diag}{diag}
\DeclareMathOperator*{\minimize}{minimize}
\DeclareMathOperator*{\maximize}{maximize}
\author{M.V. Dolgopolik\footnote{Institute for Problems in Mechanical Engineering of the Russian Academy of Sciences,
Saint Petersburg, Russia}
\footnote{This work was performed in IPME RAS and supported by the Russian Science Foundation (Grant No. 20-71-10032).}}
\title{DC Semidefinite Programming and Cone Constrained DC Optimization: Theory and Local Search Methods}
\begin{document}

\maketitle

\begin{abstract}
In this paper, we study possible extensions of the main ideas and methods of constrained DC optimization to
the case of nonlinear semidefinite programming problems and more general nonlinear and nonsmooth cone constrained
optimization problems. 

In the first part of the paper, we analyse two different approaches to the definition of DC matrix-valued functions
(namely, order-theoretic and componentwise), study some properties of convex and DC matrix-valued mappings and
demonstrate how to compute DC decompositions of some nonlinear semidefinite constraints appearing in applications. We
also compute a DC decomposition of the maximal eigenvalue of a DC matrix-valued function. This DC decomposition can be
used to reformulate DC semidefinite constraints as DC inequality constrains. Finally, we study local optimality
conditions for general cone constrained DC optimization problems.

The second part of the paper is devoted to a detailed convergence analysis of two extensions of the well-known DCA
method for solving DC (Difference of Convex functions) optimization problems to the case of general cone constrained DC
optimization problems. We study the global convergence of the DCA for cone constrained problems and present a 
comprehensive analysis of a version of the DCA utilizing exact penalty functions. In particular, we study the exactness
property of the penalized convex subproblems and provide two types of sufficient conditions for the convergence of the
exact penalty method to a feasible and critical point of a cone constrained DC optimization problem from an infeasible
starting point. In the numerical section of this work, the exact penalty DCA is applied to the problem of computing
compressed modes for variational problems and the sphere packing problem on Grassmannian. 
\end{abstract}

\section{Introduction}

Starting with the pioneering works of Hiriart-Urruty \cite{HiriartUrruty85,HiriartUrruty89}, Pham Dinh and Souad
\cite{PhamDinh1986}, Strekalovsky \cite{Strekalovsky87}, Tuy \cite{Tuy86}, and many others in the 1980s, 
DC (Difference of Convex functions) programming has been an active area of research in nonlinear nonconvex
optimization. One of the main features of DC optimization problems is the fact that one can derive constructive global
optimality conditions \cite{Tuy2003,HiriartUrruty98,DurHorstLocatelli,Strekalovsky98,Zhang2013} and develop
deterministic global optimization methods
\cite{Tuy_book,HorstThoai,LeThiPhamDinh2002,Ferrer,Strekalovsky2021,Strekalovsky_Conf}
for this class of problems. Local search methods for minimizing DC functions have also attracted a considerable
attention of researchers (see \cite{TorBagKar,GaudiosoBagirov,JokiBagirov2020,Strekalovsky_Collect} and the references
therein). 

Perhaps, the most efficient and well-known numerical method for DC optimization problems is the so-called DCA, 
originally presented by Pham Dinh and Souad in \cite{PhamDinh1986} and later on thoroughly investigated in the works of
Le Thi and Pham Dinh et al. 
\cite{PhamDinhLeThi96,PhamDinhLeThi98,LeThiPhamDinh2014,LeThiPhamDinh2014b,LeThiPhamDinh2018} (a particular version of
the DCA is sometimes called \textit{the concave-convex/convex-concave procedure} \cite{Yuille,Lanckreit}). Some closely
related local search methods were studied in the works of de Oliveira et al.
\cite{AckooijDeOliveira2019,deOliveiraTcheou,deOliveira2019,AckooijDeOliveira2019b,deOliveira2021}.
For a detailed survey on DC programming, DCA, and their applications see
\cite{PhamDinhLeThi97,LeThiPhamDinh2005,LeThiDinh2018}. A comprehensive literature review of the DCA, the convex-concave
procedure, and other related optimization methods can be found in \cite{LippBoyd}.

Cone constrained optimization is one the central areas of constrained optimization, since it provides a unified setting
for many different problems appearing in applications. Standard equality and inequality constrained problems,
semidefinite programming problems \cite{KocvaraStingl,Stingl,Todd2001}, second order cone programming problems
\cite{AlizadehGoldfarb}, semi-infinite programming problems \cite{ReemtsenRuckmann,GobernaLopez}, and many other
particular problems (see, e.g. \cite{BoydVandenberghe,NesterovNemirovski,BenTalNemirovski}) can be formulated as general
cone constrained optimization problems. 

A detailed theoretical analysis of smooth and nonsmooth cone constrained optimization problems was presented in
\cite{BonnansShapiro,MordukhovichNghia,Tung,ZhenYang2007,Kanzi2011,Gadhi}. Optimization methods for solving various
\textit{convex} cone constrained optimization problems can be found in
\cite{BoydVandenberghe,NesterovNemirovski,BenTalNemirovski}, while algorithms for solving various classes of smooth
nonconvex cone constrained optimization problems were developed, e.g. in
\cite{KocvaraStingl,Stingl,YamashitaYabe,KatoFukushima,YamashitaYabe2015,CanelasCarrasco} (see also the references 
therein).

Despite the abundance of publications on cone constrained optimization and (usually inequality) constrained DC
optimization problems, very little attention has been paid to extensions of the main results and methods of DC
optimization to the case of problems with cone constraints. Even in the comprehensive survey paper \cite{LeThiDinh2018},
only unconstrained and inequality constrained DC optimization problems are discussed.

The convex-concave procedure and the penalty convex-concave procedure for solving cone constrained DC optimization
problems were proposed by Lipp and Boyd in \cite{LippBoyd}, where an application of these methods to multi-matrix
principal component analysis was presented. However, to the best of the author's knowledge, a convergence analysis of
these methods remains an open problem. An application of the DCA to bilinear and quadratic matrix inequality feasibility
problems was considered by Niu and Dinh \cite{NiuDinh2014}. Finally, optimality conditions for DC semi-infinite
programming problems were studied in the recent paper \cite{CorreaLopezPerezAros}.

The main goal of this paper is to fill in the gap and extend some of the main results and methods of inequality
constrained DC optimization (such as the DCA) to the case of DC optimization problems with DC cone constraints,
particularly, DC semidefinite programming problems. The motivation behind this extension is connected to the fact that
the DC optimization approach allows one to develop general methods for solving \textit{nonsmooth} cone constrained
optimization problems, as well as extend global DC optimization methods to the case of such problems. Furthermore, 
the nonlocal nature of the DCA (the method uses global majorants of the objective function and constraints) in some
cases allows this algorithm to find better local solutions than traditional optimization methods. This peculiarity makes
the DCA a potentially appealing alternative to existing methods for solving cone constrained optimization problems 
(see \cite{LippBoyd} and Section~\ref{sect:NumerExperiments} for some promising results of numerical experiments).

In the first part of the paper, we present a detailed discussion of two different approaches to the definition of DC
matrix-valued mappings: order-theoretic and componentwise. We obtain several useful properties of convex and DC
matrix-valued functions, prove that any DC (in the order-theoretic sense) matrix-valued map is necessarily componentwise
DC, and demonstrate how one can compute DC decompositions of several nonlinear matrix-valued functions appearing in
applications. We also construct a DC decomposition of the maximal eigenvalue of a componentwise DC matrix-valued
mapping. This result allows one to easily extend all ideas and methods of inequality constrained DC optimization to
the case of DC optimization problems with componentwise DC semidefinite constraints. Finally, we also derive local
optimality conditions for general cone constrained DC optimization problems in several different forms.

In the second part of the paper, we study convergence of the DCA for solving cone constrained DC optimization problems
and its exact penalty version from \cite{LippBoyd}. Namely, we provide sufficient conditions for the convergence of both
methods to a critical point of the problem under consideration. We also study the exact penalty property of the
penalized convex subproblems of the second method and obtain two types of sufficient conditions for the convergence of
this method to a feasible critical point of a cone constrained DC optimization problem from an infeasible starting
point. In the end of the paper, we apply the exact penalty DCA to the problem of computing compressed modes for
variational problems \cite{OzolinsLaiCaflischOsher} and the sphere packing problem on Grassmannian
\cite{AbsilHosseini,DirrHelmkeLageman}, that has applications to multi-antenna channel communications
\cite{GoharyDavidson,ZhengTse}. We present some interesting results of numerical experiments that support our
theoretical observations on penalty updating rules and the overall performance of the method. For an interesting
application of the algorithms discussed in this paper to multi-matrix principal component analysis see \cite{LippBoyd}.

The paper is organized as follows. Order-theoretic and componentwise approaches to DC matrix valued functions are 
studied in Section~\ref{sect:DC_MatrixValued}, while a DC structure of the maximal eigenvalue of a nonlinear
matrix-valued mapping is discussed in Section~\ref{sect:MaximalEigenvalue}. Section~\ref{sect:ConeConstrainedDC} is
devoted to the derivation of local optimality conditions for general cone constrained DC optimization problems.
A convergence analysis of the DCA for cone constrained DC optimization problems is presented in Section~\ref{sect:DCA},
while a detailed analysis of the penalty convex-concave procedure (exact penal\-ty DCA) from \cite{LippBoyd} is given in
Section~\ref{sect:PenalizedCCP}. Some results of numerical experiments are contained in
Section~\ref{sect:NumerExperiments}. Finally, a primal-dual version of the exact penalty DCA is briefly discussed in the
appendix.

\section{Two Approaches to DC Matrix-Valued Functions}
\label{sect:DC_MatrixValued}

Denote by $\mathbb{S}^{\ell}$ the space of all real symmetric matrices of order $\ell \in \mathbb{N}$, and let
$\preceq$ be the L\"{o}ewner partial order on $\mathbb{S}^{\ell}$, i.e. $A \preceq B$ for some matrices 
$A, B \in \mathbb{S}^{\ell}$ if and only if the matrix $B - A$ is positive semidefinite. Nonlinear semidefinite
optimization is concerned with problems of minimizing functions subject to constraints of the form $F(x) \preceq 0$,
where $F \colon \mathbb{R}^d \to \mathbb{S}^{\ell}$ is a given nonlinear mapping. To extend the main ideas and results
of DC optimization to the case of nonlinear semidefinite programming problems, first one must introduce a suitable
definition of a DC matrix-valued mapping $F$. There are two possible approaches to this definition: order-theoretic and
componentwise. Let us discuss and compare these approaches.

Recall that the matrix-valued function $F$ is called \textit{convex} (see, e.g. \cite[Sect.~5.3.2]{BonnansShapiro}
and \cite[Sect.~3.6.2]{BoydVandenberghe}), if 
\[
  F(\alpha x_1 + (1 - \alpha) x_2) \preceq \alpha F(x_1) + (1 - \alpha) F(x_2)
  \quad \forall x_1, x_2 \in \mathbb{R}^d, \: \alpha \in [0, 1].
\]
Therefore it is natural to call the function $F$ DC (\textit{Difference-of-Convex}), if there exist convex mappings
$G, H \colon \mathbb{R}^d \to \mathbb{S}^{\ell}$ such that $F = G - H$. Any such representation of the function $F$
(or, equivalently, any such pair of functions $(G, H)$) is called a \textit{DC decomposition} of $F$. 

The definition of matrix-valued DC mapping given above has several disadvantages. Firstly, the convexity of
matrix-valued functions is much harder to verify than the convexity of real-valued functions. Many matrix-valued
mappings that might seem to be convex judging by the experience with the real-valued case are, in actuality, nonconvex.
In particular, the convexity of each component $F_{ij}(\cdot)$ of $F$ is not sufficient to ensure the matrix convexity
of $F$.

\begin{example} \label{ex:NonconvexMatrixValuedFunc}
Let $d = 1$, $\ell = 2$, and $F(x) = \left( \begin{smallmatrix} 1 & x^2 \\ x^2 & 1 \end{smallmatrix} \right)$. Then for
$x_1 = 1$ and $x_2 = -1$ one has
\[
  \alpha F(x_1) + (1 - \alpha) F(x_2) - F(\alpha x_1 + (1 - \alpha) x_2)
  = \left( \begin{smallmatrix} 0 & 1 - (2\alpha - 1)^2 \\ 1 - (2\alpha - 1)^2 & 0 \end{smallmatrix} \right).
\]
This matrix is \textit{not} positive semidefinite for any $\alpha \in (0, 1)$, which implies that the map $F$ is
nonconvex.
\end{example}

Secondly, recall that the set $\mathbb{S}^{\ell}$ equipped with the L\"{o}ewner partial order is \textit{not} a vector
lattice, since by Kadison's theorem \cite{Kadison} the least upper bound (the supremum) of two matrices in 
the L\"{o}ewner order exists if and only if these matrices are comparable. Therefore, many standard results and
techniques from convex analysis do not admit a direct extension to the case of matrix convexity (cf. the general theory
of convex vector-valued maps \cite{Papageorgiou,Thera,KusraevKutateladze}, in which the assumption on the completeness
of partial order is often indispensable). For example, in most cases the supremum of two convex matrix-valued functions
is not correctly defined.

Nevertheless, there are some similarities between matrix-valued DC mappings and their real-valued counterparts. In
particular, one can construct a DC decomposition of a twice continuously differentiable matrix-valued map with bounded
Hessian in the same way one can construct DC decomposition of a twice continuously differentiable real-valued function.

Let $I_{\ell}$ be the identity matrix of order $\ell$. Denote by $| \cdot |$ the Euclidean norm, by 
$\langle \cdot, \cdot \rangle$ the inner product in $\mathbb{R}^k$, and by $\| A \|_F = \sqrt{\trace(A^T A)}$ the
Frobenius norm of a real matrix $A$, where $\trace(\cdot)$ is the trace of a square matrix.

\begin{theorem} \label{thrm:DCDecompMatrixValued}
Let a map $F \colon \mathbb{R}^d \to \mathbb{S}^{\ell}$ be twice continuously differentiable and suppose that
there exists $M > 0$ such that $\| \nabla^2 F_{ij}(x) \|_F \le M$ for all $i, j \in \{ 1, \ldots, \ell \}$. Then 
the mapping $F$ is DC and for any $\mu \ge \ell M$ both pairs $(G_k, H_k)$, $k \in \{ 1, 2 \}$, with
\[
  G_1(x) = F(x) + \frac{\mu}{2} |x|^2 I_{\ell}, \quad 
  H_1(x) = \frac{\mu}{2} |x|^2 I_{\ell}, \quad \forall x \in \mathbb{R}^d,
\]
and
\[
  G_2(x) = \frac{\mu}{2} |x|^2 I_{\ell}, \quad
  H_2(x) = \frac{\mu}{2} |x|^2 I_{\ell} - F(x) \quad \forall x \in \mathbb{R}^d,
\]
are DC decompositions of $F$.
\end{theorem}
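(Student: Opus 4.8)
The plan is to verify directly that both $G$ and $H$ are convex matrix-valued functions in the Löwner sense; since $F = G - H$, this immediately yields that $F$ is DC with the stated decomposition. The convexity of $H(x) = \frac{\mu}{2}|x|^2 I_{\ell}$ is the easy part: for any $x_1, x_2 \in \mathbb{R}^d$ and $\alpha \in [0,1]$ the scalar inequality $|\alpha x_1 + (1-\alpha)x_2|^2 \le \alpha|x_1|^2 + (1-\alpha)|x_2|^2$ holds by convexity of the squared Euclidean norm, and multiplying it by the positive semidefinite matrix $\frac{\mu}{2}I_{\ell}$ preserves the Löwner order, which gives $H(\alpha x_1 + (1-\alpha)x_2) \preceq \alpha H(x_1) + (1-\alpha)H(x_2)$.

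The substance of the argument is the convexity of $G$. First I would reduce matrix convexity to the convexity of a family of scalar functions: straight from the definition, $G$ is convex if and only if for every $v \in \mathbb{R}^{\ell}$ the real-valued function $\phi_v(x) := \langle v, G(x) v \rangle$ is convex on $\mathbb{R}^d$, since the defining Löwner inequality for $G$ is equivalent, for all $x_1, x_2, \alpha$ simultaneously, to the scalar inequality obtained by pairing both sides with $v$. Writing $G$ componentwise gives $\phi_v(x) = \sum_{i,j=1}^{\ell} v_i v_j F_{ij}(x) + \frac{\mu}{2}|v|^2 |x|^2$, which is twice continuously differentiable, so it suffices to show that its Hessian $\nabla^2 \phi_v(x) = \sum_{i,j} v_i v_j \nabla^2 F_{ij}(x) + \mu |v|^2 I_d$ is positive semidefinite for every $x$ (here $I_d$ denotes the $d \times d$ identity).

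The key estimate is as follows. Fix $x$ and a unit vector $h \in \mathbb{R}^d$, and form the symmetric $\ell \times \ell$ matrix $A = A(x,h)$ with entries $a_{ij} = \langle h, \nabla^2 F_{ij}(x) h \rangle$ (symmetry follows from $F_{ij} = F_{ji}$). Then $\langle h, \nabla^2 \phi_v(x) h \rangle = \langle v, A v \rangle + \mu|v|^2$, and I would bound $|\langle v, A v \rangle| \le \|A\|_F |v|^2$ using that the spectral norm is dominated by the Frobenius norm. Since each entry satisfies $|a_{ij}| \le \|\nabla^2 F_{ij}(x)\|_F \le M$ (the operator norm of a Hessian evaluated against the unit vector $h$ is at most its Frobenius norm), the matrix $A$ obeys $\|A\|_F = (\sum_{i,j} a_{ij}^2)^{1/2} \le \ell M$. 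Consequently $\langle h, \nabla^2 \phi_v(x) h \rangle \ge (\mu - \ell M)|v|^2 \ge 0$ whenever $\mu \ge \ell M$, so $\nabla^2 \phi_v(x) \succeq 0$. Hence every $\phi_v$ is convex, $G$ is convex, and $F = G - H$ is the asserted DC decomposition.

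The main obstacle is obtaining exactly the sharp constant $\ell M$. A direct entrywise bound gives $|\langle v, A v \rangle| \le \sum_{i,j}|v_i||v_j|\,M = M\|v\|_1^2$, which reaches the same threshold only after invoking $\|v\|_1^2 \le \ell |v|^2$; the Frobenius-norm estimate $\|A\|_F \le \ell M$ delivers the constant $\ell$ more transparently. Either way, the condition $\mu \ge \ell M$ is precisely what is required to dominate the (possibly indefinite) curvature of $F$ encoded in $A$, which is why it appears as the threshold in the statement.
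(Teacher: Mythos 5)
Your proof is correct and follows essentially the same route as the paper: both reduce Löwner convexity of $G$ to convexity of the scalar functions $x \mapsto \langle v, G(x)v\rangle$, apply the Hessian criterion, and show the curvature term $\sum_{i,j} v_i v_j \langle h, \nabla^2 F_{ij}(x) h\rangle$ is bounded below by $-\ell M |v|^2$ for unit $h$. The only difference is cosmetic: you package this term as $\langle v, A v\rangle$ and use $\|A\|_2 \le \|A\|_F \le \ell M$, whereas the paper bounds it entrywise via Cauchy--Schwarz and $|v_i v_j| \le \tfrac{1}{2}(v_i^2 + v_j^2)$, both yielding the same threshold $\mu \ge \ell M$.
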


\begin{proof}
Observe that by the definitions of matrix convexity and the L\"{o}ewner partial order, a mapping 
$G \colon \mathbb{R}^d \to \mathbb{S}^{\ell}$ is convex if and only if for any $z \in \mathbb{R}^{\ell}$ one has
\[
  \langle z, \Big( \alpha G(x_1) + (1 - \alpha) G(x_2) - G(\alpha x_1 + (1 - \alpha) x_2) \Big) z \rangle \ge 0
\]
for all $x_1, x_2 \in \mathbb{R}^d$ and $\alpha \in [0, 1]$ or, equivalently, 
\[
  \langle z, G(\alpha x_1 + (1 - \alpha) x_2) z \rangle \le
  \alpha \langle z, G(x_1) z \rangle + (1 - \alpha) \langle z, G(x_2) z \rangle.
\]
Therefore, a map $G \colon \mathbb{R}^d \to \mathbb{S}^{\ell}$ is convex if and only if for any 
$z \in \mathbb{R}^{\ell}$ the real-valued function $G_z(\cdot) = \langle z, G(\cdot) z \rangle$ is convex. Consequently,
in the case when $G$ is twice continuously differentiable, this function is convex if and only if for any $z$ the
Hessian of the function $G_z$ is positive semidefinite, i.e. for all $x \in \mathbb{R}^d$ and $z \in \mathbb{R}^{\ell}$
the matrix
\[
  \nabla^2 G_z(x) = \sum_{i, j = 1}^{\ell} z_i z_j \nabla^2 G_{ij}(x)
\]
is positive semidefinite. 

Let us now turn to the proof of the theorem. Denote $G(x) = F(x) + \frac{\mu}{2} |x|^2 I_{\ell}$ and 
$H(x) = \frac{\mu}{2} |x|^2 I_{\ell}$. Let us check that the mappings $G$ and $H$ are convex, provided 
$\mu \ge \ell M$. Then one can conclude that $F$ is a DC function and the pair $(G_1, H_1)$ from the formulation of 
the theorem is a DC decomposition of $F$. The fact that the pair $(G_2, H_2)$ is also a DC decomposition of $F$ can be
proved in a similar way.

Indeed, for any $x \in \mathbb{R}^d$ and $v, z \in \mathbb{R}^{\ell}$ one has
\begin{equation} \label{eq:HessianFplusSquaredNorm}
  \langle v, \nabla^2 G_z(x) v \rangle = \sum_{i, j = 1}^{\ell} z_i z_j \langle v, \nabla^2 F_{ij}(x) v \rangle
  + \mu \sum_{i = 1}^{\ell} z_i^2 |v|^2
\end{equation}
Let us estimate the first term on the right-hand side of this equality. Indeed, applying the obvious inequality 
$2|z_i z_j| \le z_i^2 + z_j^2$, one gets 
\begin{align*}
  \sum_{i, j = 1}^{\ell} z_i z_j \langle v, \nabla^2 F_{ij}(x) v \rangle &\ge
  - \sum_{i, j = 1}^{\ell} |z_i z_j| \big| \langle v, \nabla^2 F_{ij}(x) v \rangle \big|
  \\
  &\ge - \frac{1}{2} \sum_{i, j = 1}^{\ell} (z_i^2 + z_j^2) \big| \langle v, \nabla^2 F_{ij}(x) v \rangle \big|.
\end{align*}
By the Cauchy-Bunyakovsky-Schwarz inequality and the fact that the Frobenius norm is compatible with the Euclidean norm
one has
\[
  \big| \langle v, \nabla^2 F_{ij}(x) v \rangle \big| \le |v| \Big| \nabla^2 F_{ij}(x) v \Big|
  \le \| \nabla^2 F_{ij}(x) \|_F |v|^2,
\]
which implies that
\begin{align*}
  \sum_{i, j = 1}^{\ell} z_i z_j \langle v, \nabla^2 F_{ij}(x) v \rangle
  &\ge - \frac{|v|^2}{2} \sum_{i, j = 1}^{\ell} (z_i^2 + z_j^2) \| \nabla^2 F_{ij}(x) \|_F
  \\
  &= - \frac{|v|^2}{2} \sum_{i = 1}^{\ell} 2 z_i^2 \Big( \sum_{j = 1}^{\ell} \| \nabla^2 F_{ij}(x) \|_F \Big),
\end{align*}
where the last equality follows from the fact that the matrix $F(x)$ is by definition symmetric. Combining this
inequality with \eqref{eq:HessianFplusSquaredNorm}, one finally obtains that
\[
  \langle v, \nabla^2 G_z(x) v \rangle \ge 
 |v|^2 \sum_{i = 1}^{\ell} \left( \mu - \sum_{j = 1}^{\ell} \| \nabla^2 F_{ij}(x) \| \right) z_i^2.
\]
Hence for any $x \in \mathbb{R}^d$, $z \in \mathbb{R}^{\ell}$, and $\mu \ge \ell M$ one has
\[
  \langle v, \nabla^2 G_z(x) v \rangle \ge 0 \quad \forall v \in \mathbb{R}^{\ell},
\]
that is, the Hessian $\nabla^2 G_z(x)$ is positive semidefinite. Thus, one can conclude that the matrix-valued mapping
$G(x) = F(x) + \frac{\mu}{2} |x|^2 I_{\ell}$ is convex. The convexity of $H$ can be readily verified directly.
\end{proof}

The difficulties connected with the use of matrix convexity motivate us to consider a different approach to the
definition of DC matrix-valued mappings.

\begin{definition}
A function $F \colon \mathbb{R}^d \to \mathbb{S}^{\ell}$ is called \textit{componentwise convex}, if each component
$F_{ij}(\cdot)$, $i, j \in \{ 1, \ldots, \ell \}$, is convex. The function $F$ is called \textit{componentwise DC}, if
there exist componentwise convex functions $G, H \colon \mathbb{R}^d \to \mathbb{S}^{\ell}$ such that $F = G - H$. Any
such representation of $F$ (or, equivalently, any such pair of functions $(G, H)$) is called 
\textit{a componentwise DC decomposition} of $F$.
\end{definition}

Many properties of real-valued DC functions can be easily extended to the case of componentwise DC matrix-valued 
mappings. For example, a linear combination of componentwise DC mappings is obviously componentwise DC. With the use
of the well-known results of Hartman \cite{Hartman}, one can easily see that the Hadamard and the Kronecker products of
componentwise DC matrix-valued mappings are componentwise DC, etc.

Let us point out some connections between convex/DC and componentwise convex/DC matrix-valued mappings. As
Example~\ref{ex:NonconvexMatrixValuedFunc} demonstrates, componentwise convex matrix-valued functions need not be
convex. On the other hand, from the fact that for any convex matrix-valued map $F$, the real-valued function 
$\langle z, F(\cdot) z \rangle$ is convex for all $z \in \mathbb{R}^{\ell}$ it follows that all diagonal components
$F_{ii}(\cdot)$ of a convex matrix-valued map $F$ must be convex (put $z = e_i$ for every vector $e_i$ from the
canonical basis of $\mathbb{R}^{\ell}$). However, non-diagonal components of $F$ need not be convex.

\begin{example}
Let $d = 1$, $\ell = 2$, and 
$F(x) = \left(\begin{smallmatrix} 0.5 x^2 & \sin x \\ \sin x & 0.5 x^2 \end{smallmatrix}\right)$. Then for all
$z \in \mathbb{R}^2$ and $x \in \mathbb{R}$ one has
\begin{align*}
  \sum_{i, j = 1}^2 z_i z_j \nabla^2 F_{ij}(x) = z_1^2 - 2 (\sin x) z_1 z_2 + z_2^2 
  &\ge z_1^2 - 2 |z_1| |z_2| + z_2^2 
  \\
  &= (|z_1| - |z_2|)^2 \ge 0.
\end{align*}
Consequently, the function $F$ is convex by \cite[Proposition~5.72, part~(ii)]{BonnansShapiro}, despite the fact that
non-diagonal elements of $F$ are nonconvex.
\end{example}

Although non-diagonal elements of a convex matrix-valued mapping $F$ might be nonconvex, they cannot be too `wild',
e.g. discontinuous. Namely, the following result holds true.

\begin{theorem} \label{thrm:NonDiagElements}
Let a map $F \colon \mathbb{R}^d \to \mathbb{S}^{\ell}$ be convex. Then for all $i, j \in \{ 1, \ldots, \ell \}$, 
$i \ne j$, the function $F_{ij}$ is DC and, therefore, Lipschitz continuous on any bounded set and twice differentiable
almost everywhere.
\end{theorem}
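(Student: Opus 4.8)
The plan is to exploit the characterization, established in the proof of Theorem~\ref{thrm:DCDecompMatrixValued}, that a matrix-valued function is convex if and only if the real-valued function $\langle z, F(\cdot) z \rangle$ is convex for every $z \in \mathbb{R}^{\ell}$. Specializing $z$ to suitable elements of the canonical basis should isolate the off-diagonal component $F_{ij}$ as an explicit difference of convex functions, after which the regularity claims will follow from the classical theory of real-valued DC functions.

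First I would record that, taking $z = e_i$, each diagonal component $F_{ii}(\cdot) = \langle e_i, F(\cdot) e_i \rangle$ is convex, and likewise $F_{jj}$ is convex. Next, for fixed $i \ne j$ I would apply the characterization with the ``polarization'' choice $z = e_i + e_j$. Since $F(x)$ is symmetric, one has $F_{ij} = F_{ji}$, and therefore
\[
  \langle e_i + e_j, F(\cdot)(e_i + e_j) \rangle = F_{ii}(\cdot) + F_{jj}(\cdot) + 2 F_{ij}(\cdot),
\]
where the left-hand side is convex by the characterization. Rearranging yields
\[
  2 F_{ij}(\cdot) = \langle e_i + e_j, F(\cdot)(e_i + e_j) \rangle - \big( F_{ii}(\cdot) + F_{jj}(\cdot) \big),
\]
which exhibits $2 F_{ij}$, and hence $F_{ij}$, as a difference of two convex functions. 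Thus $F_{ij}$ is DC, with convex summands $G = \frac{1}{2} \langle e_i + e_j, F(\cdot)(e_i + e_j) \rangle$ and $H = \frac{1}{2}( F_{ii} + F_{jj} )$.

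It then remains to deduce the regularity statements, which require no argument specific to the matrix setting. A function expressed as a difference of two finite convex functions on $\mathbb{R}^d$ is locally Lipschitz, since finite convex functions on $\mathbb{R}^d$ are locally Lipschitz, and hence $F_{ij}$ is Lipschitz continuous on every bounded set. Likewise, by Alexandrov's theorem each convex summand is twice differentiable almost everywhere, so their difference $F_{ij}$ is twice differentiable almost everywhere as well.

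I do not anticipate a genuine obstacle: the whole argument turns on the single observation that the choice $z = e_i + e_j$ converts the diagonal convexity supplied by matrix convexity into a DC representation of the off-diagonal entry. The only point deserving care is the bookkeeping ensuring that the cross terms combine into exactly $2 F_{ij}$, which relies on the symmetry $F_{ij} = F_{ji}$; everything after that is a direct appeal to standard properties of real-valued DC functions.
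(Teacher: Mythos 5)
Your proof is correct, and it is structurally simpler than the one in the paper, although both rest on the same two ingredients: the characterization of matrix convexity via convexity of $x \mapsto \langle z, F(x) z \rangle$ for every fixed $z$ (recorded in the proof of Theorem~\ref{thrm:DCDecompMatrixValued}), and a polarization choice of $z$ that exposes the off-diagonal entry. The paper proves the statement by induction on $\ell$: its base case $\ell = 2$ uses $z = (1,1)^T$, which is exactly your $e_i + e_j$ specialized to $i = 1$, $j = 2$; the inductive step then passes to the two principal $\ell \times \ell$ submatrices (obtained by taking $z$ with last, respectively first, coordinate equal to zero) to conclude that all entries are DC except $F_{1 (\ell + 1)}$, and finally uses $z = (1, \ldots, 1)^T$ together with closedness of the class of DC functions under differences to handle that remaining entry. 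Your observation that one may take $z = e_i + e_j$ directly in $\mathbb{R}^{\ell}$ eliminates the induction entirely: for any pair $i \ne j$ you obtain, in one line,
\[
  F_{ij} = \tfrac{1}{2} \langle e_i + e_j, F(\cdot)(e_i + e_j) \rangle - \tfrac{1}{2} \big( F_{ii} + F_{jj} \big),
\]
an explicit DC decomposition whose convex parts are written in closed form, whereas the paper's inductive argument certifies that $F_{1(\ell+1)}$ is DC only by accumulating differences of DC functions along the induction, so the resulting decomposition, while extractable, is more convoluted. The regularity conclusions (Lipschitz continuity on bounded sets and almost-everywhere twice differentiability) are handled identically in both arguments, by appeal to the classical properties of finite convex functions on $\mathbb{R}^d$. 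In short: same key lemma and same polarization trick, but your direct choice of the polarization vector renders the paper's induction unnecessary.
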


\begin{proof}
We prove the theorem by induction in $\ell$. The case $\ell = 1$ is trivial. Let us prove the case $\ell = 2$ in order
to highlight the main idea of the proof.

As was noted above, the function $\langle z, F(\cdot) z \rangle$ is convex for all $z \in \mathbb{R}^{\ell}$, which, in
particular, implies that the functions $F_{11}(\cdot)$ and $F_{22}(\cdot)$ are convex. For the vector $z = (1, 1)^T$ one
obtains that the function
\[
  F_z(x) = \langle z, F(x) z \rangle = F_{11}(x) + 2 F_{12}(x) + F_{22}(x), \quad x \in \mathbb{R}^d
\]
is convex as well. Therefore the function
\[
  F_{12}(x) = F_{21}(x) = \frac{1}{2} F_z(x) - \frac{1}{2}(F_{11}(x) + F_{22}(x))
\]
is DC, which completes the proof of the case $\ell = 2$.

\textit{Inductive step.} Suppose that the theorem is valid for some $\ell \in \mathbb{N}$. Let us prove it for 
$\ell + 1$. The function $F_z(\cdot) = \langle z, F(\cdot) z \rangle$ is convex for all $z \in \mathbb{R}^{\ell + 1}$.
Putting $z = (z_1, \ldots, z_{\ell}, 0)^T$ and $z = (0, z_2, \ldots, z_{\ell + 1})^T$ for any $z_i \in \mathbb{R}$, 
$i \in \{ 1, \ldots, \ell + 1 \}$ one obtains that the matrix-valued mappings
\[
  G(x) = \begin{pmatrix} F_{11}(x) & \dots & F_{1 \ell}(x) \\ \vdots & \vdots & \vdots \\ F_{\ell 1}(x) & \dots &
  F_{\ell \ell}(x) \end{pmatrix}, \quad
  H(x) = \begin{pmatrix} F_{22}(x) & \dots & F_{2 (\ell + 1)}(x) \\ \vdots & \vdots & \vdots \\ 
  F_{(\ell + 1) 2}(x) & \dots & F_{(\ell + 1) (\ell + 1)}(x) \end{pmatrix}
\]
are convex. Therefore, by the induction hypothesis all functions $F_{ij}$, $i, j \in \{ 1, \ldots, \ell + 1 \}$ are DC,
except for $F_{1 (\ell + 1)}$ (or, equivalently, $F_{(\ell + 1) 1}$, since $F(x)$ is by definition a symmetric matrix).

For $z = (1, \ldots, 1)^T$ one gets that the function
\[
  F_z(x) = \sum_{i, j = 1}^{\ell + 1} F_{ij}(x), \quad x \in \mathbb{R}^d
\]
is convex, which obviously implies that the function $F_{1 (\ell + 1)}$ is DC.

Finally, taking into account the fact that finite-valued convex functions are Lipschitz continuous on bounded sets
\cite[Thm.~10.4]{Rockafellar} and twice differentiable almost everywhere by the Busemann-Feller-Aleksandrov theorem
(see, e.g. \cite{BianchiColesantiPucci}), one can conclude that for all $i, j \in \{ 1, \ldots, \ell \}$ 
the functions $F_{ij}$ are Lipschitz continuous on bounded sets and twice differentiable almost everywhere.
\end{proof}

As simple corollaries to the previous theorem we obtain straightforward extensions of some well-known results for
real-valued convex function to the matrix-valued case.

\begin{corollary}
Let a map $F \colon \mathbb{R}^d \to \mathbb{S}^{\ell}$ be convex. Then $F$ is Lipschitz continuous on bounded
sets, i.e. for any bounded set $K \subset \mathbb{R}^d$ there exists $L > 0$ such that
$\| F(x_1) - F(x_2) \|_F \le L |x_1 - x_2|$ for all $x_1, x_2 \in K$.
\end{corollary}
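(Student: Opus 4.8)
The plan is to reduce the matrix statement to the componentwise Lipschitz continuity of the entries of $F$ and then reassemble the estimates through the Frobenius norm. First I would observe that every entry $F_{ij}$ is Lipschitz continuous on the bounded set $K$, distinguishing two cases. For the off-diagonal entries ($i \ne j$) this is precisely the conclusion of Theorem~\ref{thrm:NonDiagElements}, which asserts that such $F_{ij}$ are DC and hence Lipschitz continuous on any bounded set. For the diagonal entries I would invoke the observation made just before that theorem: taking $z = e_i$ in the characterization $\langle z, F(\cdot) z \rangle$ convex shows that each $F_{ii}$ is a finite real-valued convex function on $\mathbb{R}^d$, and a convex function that is finite everywhere on $\mathbb{R}^d$ is locally Lipschitz, hence Lipschitz on any bounded set.

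Thus for every pair $(i, j)$ there is a constant $L_{ij} > 0$ with $|F_{ij}(x_1) - F_{ij}(x_2)| \le L_{ij} |x_1 - x_2|$ for all $x_1, x_2 \in K$. The remaining step is purely algebraic: expanding the squared Frobenius norm entry by entry gives
\[
  \| F(x_1) - F(x_2) \|_F^2 = \sum_{i, j = 1}^{\ell} |F_{ij}(x_1) - F_{ij}(x_2)|^2
  \le \Big( \sum_{i, j = 1}^{\ell} L_{ij}^2 \Big) |x_1 - x_2|^2,
\]
so the corollary holds with $L = \big( \sum_{i, j = 1}^{\ell} L_{ij}^2 \big)^{1/2}$.

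There is essentially no real obstacle here, as all of the analytic work was already carried out in Theorem~\ref{thrm:NonDiagElements}; the only additional ingredient is the classical local Lipschitz property of finite convex functions on $\mathbb{R}^d$. The one point that warrants a moment's care is to treat the diagonal entries separately from the off-diagonal ones, since Theorem~\ref{thrm:NonDiagElements} explicitly restricts to $i \ne j$: the diagonal case is supplied by plain convexity rather than by a DC decomposition.
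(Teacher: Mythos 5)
Your proof is correct and matches the argument the paper intends: the corollary is stated without proof precisely because it follows from Theorem~\ref{thrm:NonDiagElements} (off-diagonal entries are DC, hence Lipschitz on bounded sets) together with the convexity of the diagonal entries and the entrywise expansion of the Frobenius norm, which is exactly your reduction. Your explicit handling of the diagonal entries via the local Lipschitz property of finite convex functions is the right way to cover the case that Theorem~\ref{thrm:NonDiagElements} excludes.
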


\begin{corollary}[Busemann-Feller-Aleksandrov theorem for matrix-va\-lued functions]
Let a map $F \colon \mathbb{R}^d \to \mathbb{S}^{\ell}$ be convex. Then $F$ is twice differentiable almost everywhere.
\end{corollary}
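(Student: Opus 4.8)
The plan is to reduce the matrix-valued statement to the classical scalar Aleksandrov-Busemann-Feller theorem by working componentwise. The crucial observation is already supplied by Theorem~\ref{thrm:NonDiagElements} together with the remark preceding it: every component $F_{ij}$ of a convex matrix-valued function $F$ is DC. Indeed, the diagonal components $F_{ii}$ are convex (hence trivially DC, as follows from putting $z = e_i$), while the off-diagonal components $F_{ij}$, $i \ne j$, were shown to be DC in Theorem~\ref{thrm:NonDiagElements}. Since twice differentiability of a matrix-valued function at a point is, by definition, the simultaneous twice differentiability of all its scalar components at that point, it suffices to show that each $F_{ij}$ is twice differentiable almost everywhere and then to intersect the corresponding full-measure sets.

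First I would invoke the scalar Aleksandrov-Busemann-Feller theorem: any convex function $g \colon \mathbb{R}^d \to \mathbb{R}$ admits a second-order Taylor expansion, and is therefore twice differentiable, at almost every point. Writing each component as a difference $F_{ij} = g_{ij} - h_{ij}$ of two convex functions, I would apply this result to $g_{ij}$ and $h_{ij}$ separately, obtaining full-measure sets $\Omega_{ij}^g$ and $\Omega_{ij}^h$ on which $g_{ij}$ and $h_{ij}$, respectively, are twice differentiable. On the intersection $\Omega_{ij}^g \cap \Omega_{ij}^h$, which again has full measure, the difference $F_{ij}$ is twice differentiable.

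Finally, because there are only finitely many components (namely $\ell^2$, or $\ell(\ell + 1)/2$ by symmetry), the set
\[
  \Omega = \bigcap_{i, j = 1}^{\ell} \Big( \Omega_{ij}^g \cap \Omega_{ij}^h \Big)
\]
is a finite intersection of sets of full Lebesgue measure and hence itself has full measure, its complement being a null set. At every point of $\Omega$ all components of $F$ are twice differentiable, so $F$ is twice differentiable there, which proves that $F$ is twice differentiable almost everywhere.

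Since the argument is a direct assembly of previously established facts, I do not anticipate a genuine obstacle. The only points that warrant care are the bookkeeping with null sets — ensuring twice differentiability of the DC components is obtained on a common full-measure set through a finite intersection — and making explicit that twice differentiability of the matrix-valued $F$ is precisely componentwise twice differentiability, so that passing from the components back to $F$ requires no additional work.
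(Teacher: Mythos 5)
Your proposal is correct and follows essentially the same route as the paper: Theorem~\ref{thrm:NonDiagElements} already establishes that every component $F_{ij}$ is DC (hence twice differentiable almost everywhere by the scalar Aleksandrov--Busemann--Feller theorem applied to each convex part), and the corollary follows by intersecting the finitely many full-measure sets, exactly as you argue. The paper treats this as an immediate consequence and gives no separate proof, so your explicit bookkeeping with the null sets is simply a spelled-out version of the intended argument.
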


\begin{remark}
Note that the statement of Theorem~\ref{thrm:NonDiagElements} is obviously true for locally convex (i.e. convex in a
neighbourhood of every point) matrix-valued mappings defined on not necessarily convex sets. Therefore, the previous
corollary remains true in this case as well. Namely, every locally convex map $F \colon U \to \mathbb{S}^{\ell}$ defined
on an open set $U \subset \mathbb{R}^d$ is twice differentiable almost everywhere on $U$.
\end{remark}

Since the difference of two real-valued DC functions is a DC function, Theorem~\ref{thrm:NonDiagElements} also allows
one to point out a direct connection between DC and componentwise DC matrix-valued mappings.

\begin{corollary}
Any DC map $F \colon \mathbb{R}^d \to \mathbb{S}^{\ell}$ is componentwise DC.
\end{corollary}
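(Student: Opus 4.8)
The plan is to reduce the corollary to a statement about the individual entries and then invoke Theorem~\ref{thrm:NonDiagElements}. Write $F = G - H$ with $G, H \colon \mathbb{R}^d \to \mathbb{S}^{\ell}$ convex matrix-valued functions. The first step I would take is to record that \emph{every} component of a convex matrix-valued function is a real-valued DC function. Indeed, the diagonal components $G_{ii}$ and $H_{ii}$ are in fact convex (as observed before Theorem~\ref{thrm:NonDiagElements}, by taking $z = e_i$ in the characterization of matrix convexity), hence trivially DC, while the off-diagonal components are DC by Theorem~\ref{thrm:NonDiagElements} itself. Consequently each $G_{ij}$ and each $H_{ij}$ is a real-valued DC function.

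Next, since the class of real-valued DC functions is closed under subtraction, the component $F_{ij} = G_{ij} - H_{ij}$ is DC for every pair $(i,j)$. This already establishes that each entry of $F$ is DC; it then remains only to repackage these entrywise decompositions into a single pair of componentwise convex matrix-valued functions realizing a decomposition $F = \widetilde{G} - \widetilde{H}$.

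To do so, for each entry I would fix a real-valued DC decomposition $F_{ij} = g_{ij} - h_{ij}$ with $g_{ij}, h_{ij}$ convex. The only point requiring a bit of care — and the closest thing to an obstacle in an otherwise immediate argument — is respecting the symmetry constraint $\widetilde{G}, \widetilde{H} \in \mathbb{S}^{\ell}$: since $F_{ij} = F_{ji}$, I would simply reuse the same decomposition on the transposed entry, setting $g_{ji} = g_{ij}$ and $h_{ji} = h_{ij}$ for $i \ne j$. Defining $\widetilde{G} = (g_{ij})$ and $\widetilde{H} = (h_{ij})$ then yields symmetric, componentwise convex functions with $F = \widetilde{G} - \widetilde{H}$, which is exactly a componentwise DC decomposition of $F$. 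Therefore $F$ is componentwise DC.
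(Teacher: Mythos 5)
Your proof is correct and follows essentially the same route as the paper: the corollary is obtained exactly by combining Theorem~\ref{thrm:NonDiagElements} (off-diagonal entries of the convex parts are DC, diagonal entries are convex) with the fact that differences of real-valued DC functions are DC. The only addition is your explicit repackaging of the entrywise decompositions into symmetric componentwise convex matrices, a detail the paper leaves implicit but which you handle correctly.
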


Since the definition of DC function provides a lot of flexibility (namely, there are infinitely many DC decompositions
of a given function), it seems reasonable to assume that despite some drawbacks of matrix convexity the class of
matrix-valued DC mappings is sufficiently rich. In particular, one might ask whether the class of matrix valued DC
functions coincides with the class of componentwise DC functions or there are some componentwise DC mappings that are
not DC (a characterization of such functions would provide a deep insight into the structure of DC matrix-valued
mappins). Another interesting question is whether the matrix DC property is preserved under standard operations, such
as the Hadamard/Kronecker product and inversion. Arguing in the same way as in the proof of
Theorem~\ref{thrm:DCDecompMatrixValued}, one can easily check that for twice continuously differentiable matrix-valued
mappings the answer to this question is positive, provided one considers \textit{locally} DC functions. However, it is
unclear whether the classes of locally and globally DC mappings coincide in the matrix-valued case (for
componentwise DC functions this statement is true due to the celebrated result of Hartman \cite{Hartman}). 

In the end of this section, let us present several simple examples of DC semidefinite constraints appearing in
applications and their DC decompositions. These examples, in particular, demonstrate some benefits of using
matrix-valued DC mappingss in comparison with componentwise DC mappings.

\begin{example}[Quadratic/Bilinear Constraints]
Suppose that
\begin{equation} \label{eq:QuadraticMatrixFunc}
  F(x) = C + \sum_{i = 1}^d x_i B_i + \sum_{i, j = 1}^d x_i x_j A_{ij}
\end{equation}
for some matrices $C, B_i, A_{ij} \in \mathbb{S}^{\ell}$. In particular, one can suppose that the map $F(x)$ is
bilinear/biaffine, that is,
\[
  F(x, y) = A_{00} + \sum_{i = 1}^d x_i A_{i0} + \sum_{j = 1}^m y_j A_{0j} + 
  \sum_{i = 1}^d \sum_{j = 1}^m x_i y_j A_{ij},
  \quad \forall x \in \mathbb{R}^d, \: y \in \mathbb{R}^m
\]
for some matrices $A_{ij} \in \mathbb{S}^{\ell}$. Such nonlinear matrix constraints appear in problems of simultaneous
stabilisation of single-input single-output linear systems by one fixed controller of a given order
\cite{HenrionTarbouriech,Stingl}, robust gain-scheduling and some decentralized control problems 
\cite{GohSafonov95,GohSafonov96}, problems of maximizing the minimal eigenfrequency of a given structure \cite{Stingl},
etc.

By Theorem~\ref{thrm:DCDecompMatrixValued} the map $F$ of the form \eqref{eq:QuadraticMatrixFunc} is DC and for
any $\mu \ge \ell M$, where
\[
  M^2 = \max_{s, k \in \{ 1, \ldots, \ell \}} \sum_{i, j = 1}^d [A_{ij}]_{sk}^2,
\] 
the pair
\[
  G(x) = C + \sum_{i = 1}^d x_i B_i + \sum_{i, j = 1}^d x_i x_j A_{ij} + \frac{\mu}{2} |x|^2 I_{\ell}, \quad
  H(x) = \frac{\mu}{2} |x|^2 I_{\ell}
\]
is a DC decomposition of $F$. Note that to compute a componentwise DC decomposition of $F$ one would have to compute DC
decompositions of $\ell^2$ quadratic functions of the form
\[
  \sum_{i, j = 1}^d [A_{ij}]_{sk} x_i x_j, \quad s, k \in \{ 1, \ldots, \ell \}.
\]
Moreover, in the general case the mapping $H$ (the concave part) from a componentwise DC decomposition of $F$ would not
be diagonal.

It should be noted that a different DC decomposition of the mapping $F$ can be constructed. Namely, as was shown in
\cite[Example~5.74]{BonnansShapiro}, a matrix-valued map $F$ of the form \eqref{eq:QuadraticMatrixFunc} is convex,
if the $\ell d \times \ell d$ block matrix $A = (A_{ij})_{i, j = 1}^d$ is positive semidefinite (note that replacing, if
necessary, $A$ with $0.5(A + A^T)$, one can suppose that the block matrix $A$ is symmetric). Therefore, if a
decomposition $A = A_+ + A_-$ of the matrix $A$ onto positive semidefinite and negative semidefinite parts is known,
one can define
\[
  G(x) = C + \sum_{i = 1}^d x_i B_i + \sum_{i, j = 1}^d x_i x_j (A_+)_{ij}, \quad
  H(x) = -\sum_{i, j = 1}^d x_i x_j (A_-)_{ij}
\]

Such DC decomposition can be used, if the block matrix $A$ has a relatively simple structure, e.g. when only the
diagonal blocks $A_{ii}$ are nonzero.
\end{example}

\begin{example}[Bilinear/Biaffine Matrix Constraints]
Consider the map
\[
  R(X_1, X_2, X_3) = \begin{bmatrix} X_1 & (A + B X_2 C)X_3 \\ X_3 (A + B X_2 C)^T & X_3 \end{bmatrix}
\]
for all $X_1, X_3 \in \mathbb{S}^{\ell}$, $X_2 \in \mathbb{R}^{m \times m}$, and for some matrices 
$A \in \mathbb{R}^{\ell \times \ell}$, $B \in \mathbb{R}^{\ell \times m}$, and $C \in \mathbb{R}^{m \times \ell}$.
Nonlinear semidefinite constraints involving such mappings $R$ (or similar ones) appear, e.g. in optimal
$\mathcal{H}_2$/$\mathcal{H}_{\infty}$-static output feedback problems \cite{Stingl,Compleib}.

To apply the results presented in this section to the mapping $R$, define 
$d = 0.5 \ell(\ell + 1) + m^2 + 0.5 \ell(\ell + 1)$ (here we used the fact that a matrix $X \in \mathbb{S}^{\ell}$ is
defined by $\ell(\ell + 1) / 2$ variables). For any $x \in \mathbb{R}^d$ let $(X_1, X_2, X_3)$ be the corresponding
triplet of matrices from $\mathbb{S}^{\ell} \times \mathbb{R}^{m \times m} \times \mathbb{S}^{\ell}$, and let
$F(x) = R(X_1, X_2, X_3)$. 

By Theorem~\ref{thrm:DCDecompMatrixValued} the map $F$ is DC and for any $\mu \ge \ell M$, where
\[
  M^2 = \max_{i \in \{ 1, \ldots, \ell \}} 
  \sum_{k_1 = 1}^m \sum_{k_2 = 1}^m \sum_{k_3 = 1}^{\ell} \big( B_{ik_1} C_{k_2 k_3} \big)^2,
\]
the pair
\[
  G(x) = F(x) + \frac{\mu}{2} \big( \| X_2 \|_F^2 + \| X_3 \|_F^2 \big) I_{2\ell},
  \quad
  H(x) = \frac{\mu}{2} \big( \| X_2 \|_F^2 + \| X_3 \|_F^2 \big) I_{2\ell}
\]
is a DC decomposition of $F$. 
\end{example}

\begin{example}[The Stiefel Manifold/Orthogonality Constraint] \label{ex:StiefelManifold}
Let $d = m \times \ell$ for some $m \in \mathbb{N}$, i.e. $x$ is a real matrix of order $m \times \ell$, which we denote
by $X$. Consider the equality constraint
\begin{equation} \label{eq:SteifelManifold}
  X^T X = I_{\ell},
\end{equation}
which is known as the Stiefel manifold or orthogonality constraint appearing in many applications
\cite{EdelmanThomas,Manton,AbsilMahony,LippBoyd}.

Following Lipp and Boyd \cite{LippBoyd}, we rewrite equality constraint \eqref{eq:SteifelManifold} as two matrix
inequality constraints:
\[
  G(X) = X^T X - I_{\ell} \preceq 0, \quad H(X) = I_{\ell} - X^T X \preceq 0.
\]
Let, as above, $G_z(X) = \langle z, G(X) z \rangle$. Observe that for any $X_1, X_2 \in \mathbb{R}^{m \times \ell}$ and
$\alpha \in [0, 1]$ one has
\begin{align*}
  \alpha G_z(X_1) &+ (1 - \alpha) G_z(X_2) - G_z(\alpha X_1 + (1 - \alpha) X_2) 
  \\
  &= (\alpha - \alpha^2) \langle z, X_1^T X_1 z \rangle 
  + \big( (1 - \alpha) - (1 - \alpha)^2 \big) \langle z, X_2^T X_2 z \rangle  
  \\
  &- \alpha (1 - \alpha) \langle z, (X_1^T X_2 + X_2^T X_1) z \rangle 
  \\
  &= \alpha (1 - \alpha) \Big( |X_1 z|^2 + |X_2 z|^2 - 2 \langle X_1 z, X_2 z \rangle \Big) 
  \\
  &= \alpha (1 - \alpha) \big| X_1 z - X_2 z \big|^2 \ge 0.
\end{align*}
Consequently, the function $G_z$ is convex for any $z \in \mathbb{R}^{\ell}$, which implies that the functions $G$ and
$-H$ are matrix convex. Thus, equality constraint \eqref{eq:SteifelManifold} can be rewritten as two DC semidefinite
constraints. It should be noted that although this transformation is degenerate (we rewrite an equality constraint as
two inequality constraints), numerical experiments reported in \cite{LippBoyd} demonstrate the effectiveness of an
optimization method based on such transformation.
\end{example}

\section{DC Structure of the Maximal Eigenvalue Function}
\label{sect:MaximalEigenvalue}

Since there is no obvious connection between componentwise convexity and the L\"{o}ewner partial order/matrix convexity,
componentwise DC matrix-valued mappings cannot be utilised directly in the abstract setting of nonlinear semi\-definite
programming problems. Instead, it is natural to apply componentwise DC property to a reformulation of such problems,
in which the semidefinite constraint $F(x) \preceq 0$ is replaced by the equivalent inequality constraint 
$\lambda_{\max}(F(x)) \le 0$, where $\lambda_{\max}(A)$ is the maximal eigenvalue of a symmetric matrix $A$. 

Our aim is to show that for componentwise DC mappings $F$ the inequality constraint $\lambda_{\max}(F(x)) \le 0$ is
also DC, and one can compute a DC decomposition of the maximal eigenvalue function $\lambda_{\max}(F(\cdot))$, if 
a componentwise DC decomposition of the map $F$ is known. With the use of this result one can easily extend
standard results and algorithms from the theory of DC constrained DC optimization problems to the case of 
DC semidefinite programming problems.

\begin{theorem} \label{thrm:MaxEigenvalue_DC}
Let $F \colon \mathbb{R}^d \to \mathbb{S}^{\ell}$ be a componentwise DC mapping and $F_{ij} = G_{ij} - H_{ij}$ be a DC
decomposition of each component of $F$, $i, j \in \{ 1, \ldots, \ell \}$. Then the function $\lambda_{\max}(F(\cdot))$
is DC and the pair $(g, h)$ with
\begin{equation} \label{eq:MaxEigenvalue_DCDecom}
\begin{split}
  g(x) &= \max_{|v| \le 1} \sum_{i, j = 1}^{\ell} \Big( (v_i v_j + 1) G_{ij}(x) + (1 - v_i v_j) H_{ij}(x) \Big), \\
  h(x) &= \sum_{i, j = 1}^{\ell} \Big( G_{ij}(x) + H_{ij}(x) \Big)
\end{split}
\end{equation}
for all $x \in \mathbb{R}^d$ is a DC decomposition of the function $\lambda_{\max}(F(\cdot))$.
\end{theorem}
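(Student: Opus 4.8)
The plan is to reduce the statement to the Rayleigh--Ritz variational characterization of the largest eigenvalue,
\[
  \lambda_{\max}(A) = \max_{|v| = 1} \langle v, A v \rangle = \max_{|v| = 1} \sum_{i, j = 1}^{\ell} v_i v_j A_{ij},
  \qquad A \in \mathbb{S}^{\ell},
\]
and then to verify separately that the pair $(g, h)$ in \eqref{eq:MaxEigenvalue_DCDecom} reproduces this maximum and consists of convex functions. I would split the argument into two independent parts: an algebraic identity establishing $\lambda_{\max}(F(x)) = g(x) - h(x)$, and a convexity check for $g$ and $h$.

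For the algebraic identity I would fix $x$ and separate the $v$-dependent and $v$-independent terms inside $g$. Since $(v_i v_j + 1) G_{ij} + (1 - v_i v_j) H_{ij} = v_i v_j (G_{ij} - H_{ij}) + (G_{ij} + H_{ij}) = v_i v_j F_{ij}(x) + (G_{ij} + H_{ij})$, summing over $i, j$ turns the bracket into $\langle v, F(x) v \rangle + h(x)$, where the constant $h(x) = \sum_{i,j}(G_{ij} + H_{ij})$ factors out of the maximization. Hence $g(x) - h(x) = \max_v \langle v, F(x) v \rangle$, and it remains to identify this maximum with $\lambda_{\max}(F(x))$ via the variational formula.

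For the convexity of $g$ I would use that a pointwise supremum of convex functions is convex, with compactness of the index set ensuring the supremum is finite and attained. For each fixed $v$ the inner function $x \mapsto \sum_{i,j} [(v_i v_j + 1) G_{ij}(x) + (1 - v_i v_j) H_{ij}(x)]$ is a nonnegative linear combination of the convex components $G_{ij}$ and $H_{ij}$. The nonnegativity of the coefficients is exactly where the shift by $1$ is needed: by the bound $|v_i v_j| \le \tfrac{1}{2}(v_i^2 + v_j^2) \le \tfrac{1}{2} |v|^2 \le \tfrac{1}{2}$ for $|v| \le 1$ (the same inequality used in the proof of Theorem~\ref{thrm:DCDecompMatrixValued}), one gets $v_i v_j + 1 \ge \tfrac{1}{2} > 0$ and $1 - v_i v_j \ge \tfrac{1}{2} > 0$. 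Thus each inner function is convex, so $g$ is convex; convexity of $h$ is immediate, being a sum of the convex functions $G_{ij}$ and $H_{ij}$.

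The delicate point I expect to be the crux is the passage from $\max_v \langle v, F(x) v \rangle$ to $\lambda_{\max}(F(x))$. If the maximization runs over the full unit ball $|v| \le 1$, the quadratic form is maximized at $v = 0$ whenever $F(x)$ is negative definite, so that quantity equals the positive part $\max\{0, \lambda_{\max}(F(x))\}$ rather than $\lambda_{\max}(F(x))$ itself. To obtain the stated identity $g - h = \lambda_{\max}(F(\cdot))$ one must therefore take the maximum over the unit sphere $|v| = 1$, where the Rayleigh--Ritz formula applies directly; I would make this restriction explicit and note that the same coefficient bounds still hold on the sphere, so the convexity argument is unaffected and the compactness of the sphere preserves attainment of the maximum. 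Once the sphere is used, the algebraic identity, the convexity of $g$ and $h$, and the variational formula combine to give the claim.
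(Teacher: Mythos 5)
Your proof is correct, and it follows the same overall strategy as the paper's: regroup the bracket as $\langle v, F(x) v \rangle + h(x)$, pull the $v$-independent term $h(x)$ out of the maximization, and obtain convexity of $g$ as a supremum of nonnegative combinations of the convex functions $G_{ij}$ and $H_{ij}$, the nonnegativity of the coefficients $v_i v_j + 1$ and $1 - v_i v_j$ coming from $|v_i v_j| \le \frac{1}{2}(v_i^2 + v_j^2) \le \frac{1}{2}$. The point you flag as the crux, however, is not a defect of your argument but of the paper's: the paper's proof opens with the claim that $\lambda_{\max}(F(x)) = \max_{|v| \le 1} \langle v, F(x) v \rangle$, calling it ``well-known and easy to check''. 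That identity fails whenever $F(x)$ is negative definite: the maximum over the ball is then attained at $v = 0$ and equals $0$, so in general $\max_{|v| \le 1} \langle v, F(x) v \rangle = \max\{0, \lambda_{\max}(F(x))\}$. With $g$ defined over the ball as in \eqref{eq:MaxEigenvalue_DCDecom}, one therefore gets $g - h = \max\{0, \lambda_{\max}(F(\cdot))\}$ rather than $\lambda_{\max}(F(\cdot))$ --- and negative definite values of $F(x)$ are precisely the strictly feasible points of the constraint $F(x) \preceq 0$, so the discrepancy is not vacuous.

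Your repair --- taking the maximum over the unit sphere $|v| = 1$ --- is the right one: the Rayleigh--Ritz formula holds there, the coefficient bounds $v_i v_j + 1 \ge \frac{1}{2}$ and $1 - v_i v_j \ge \frac{1}{2}$ persist, and the supremum of convex functions over the compact sphere is still convex and attained, so the corrected pair $(g, h)$ is a genuine DC decomposition of $\lambda_{\max}(F(\cdot))$. The correction also rescues the subsequent Proposition~\ref{prp:MaxEigenvalue_DCQuasidiff}: with the sphere, the argmax set consists exactly of unit eigenvectors corresponding to $\lambda_{\max}(F(x))$, as that proposition asserts, whereas with the ball it would be $\{0\}$ whenever $F(x) \prec 0$. (One caveat worth recording: if the only goal is to reformulate the constraint $F(x) \preceq 0$, the ball version is harmless, since $\max\{0, \lambda_{\max}\} \le 0$ iff $\lambda_{\max} \le 0$; but the theorem as stated, and any use of $\lambda_{\max}(F(\cdot))$ as an objective or penalty term, requires the sphere.)
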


\begin{proof}
Fix any $x \in \mathbb{R}^d$. As is well-known and easy to check, the following equality holds true:
\[
  \lambda_{\max}(F(x)) = \max_{|v| \le 1} \langle v, F(x) v \rangle 
  = \max_{|v| \le 1} \sum_{i, j = 1}^{\ell} v_i v_j F_{ij}(x).
\]
Adding and subtracting $G_{ij}(x) + H_{ij}(x)$ for all $i, j \in \{ 1, \ldots, \ell \}$ and taking into account 
the equality $F_{ij}(x) = G_{ij}(x) - H_{ij}(x)$, one obtains that
\begin{align*}
  \lambda_{\max}(F(x)) &= \max_{|v| \le 1} 
  \sum_{i, j = 1}^{\ell} \Big( (v_i v_j + 1) G_{ij}(x) + (1 - v_i v_j) H_{ij}(x) \Big) \\
  &- \sum_{i, j = 1}^{\ell} \Big( G_{ij}(x) + H_{ij}(x) \Big) =: g(x) - h(x).
\end{align*}
The function $h$ is obviously convex as the sum of convex functions. Moreover, note that $v_i v_j + 1 \ge 0$ and
$1 - v_i v_j \ge 0$ for all $|v| \le 1$. Therefore, the function $g$ is also convex as the maximum of the family of
convex functions
\[
  (v_i v_j + 1) G_{ij}(x) + (1 - v_i v_j) H_{ij}(x), \quad |v| \le 1.
\]
Thus, the function $\lambda_{\max}(F(\cdot))$ is DC and the pair $(g, h)$ defined in \eqref{eq:MaxEigenvalue_DCDecom}
is a DC decomposition of this function.
\end{proof}

\begin{remark}
Let us make an almost trivial, yet useful observation. By definition $g(x) = \lambda_{\max}(F(x)) + h(x)$. Therefore,
there is no need to directly compute the maximum in the definition of $g$ in order to compute $g(x)$. One simply has to
to find the maximal eigenvalue of the matrix $F(x)$ and then add $h(x)$.
\end{remark}

For the sake of completeness, let us point out explicit formulae for the subdifferentials of the convex functions $g$
and $h$ from the theorem above. To this end, for any matrix $A \in \mathbb{S}^{\ell}$ denote by $\mathcal{E}_{\max}(A)$
the eigenspace of $\lambda_{\max}(A)$. 

\begin{proposition} \label{prp:MaxEigenvalue_DCQuasidiff}
Under the assumptions of Theorem~\ref{thrm:MaxEigenvalue_DC} for any $x \in \mathbb{R}^d$ one has 
\begin{align*}
  \partial g(x) = \co\Big\{ &\sum_{i, j = 1}^{\ell} 
  \Big( (v_i v_j + 1) \partial G_{ij}(x) + (1 - v_i v_j) \partial H_{ij}(x) \Big) \Bigm|
  \\
  &v \in \mathcal{E}_{\max}(F(x)), \enspace |v| = 1 \Big\}
\end{align*}
and $\partial h(x) = \sum_{i, j = 1}^{\ell} (\partial G_{ij}(x) + \partial H_{ij}(x))$, where `$\co$' stands for the
convex hull.
\end{proposition}

\begin{proof}
The expression for $\partial h(x)$ follows directly from the standard rules of subdifferential calculus. Let us prove
the equality for $\partial g(x)$.

Indeed, fix any $x \in \mathbb{R}^d$ and denote by $V(F(x))$ the set of all those $v \in \mathbb{R}^{\ell}$ with 
$|v| \le 1$ for which the maximum in the definition of $g(x)$ is attained. Clearly, $V(F(x))$ is a compact set. With the
use of the theorem on the subdifferential of the supremum of an infinite family of convex functions (see, e.g.
\cite[Thm.~4.2.3]{IoffeTihomirov}) one obtains that
\[
  \partial g(x) = \co\Big\{ \sum_{i, j = 1}^{\ell} (v_i v_j + 1) \partial G_{ij}(x) + (1 - v_i v_j) \partial H_{ij}(x) 
  \Bigm| v \in V(F(x)) \Big\}.
\]
Note that this convex hull is closed as the convex hull of a compact set. Therefore, it remains to show that 
$v \in V(F(x))$ if and only if $v \in \mathcal{E}_{\max}(F(x))$ and $|v| = 1$.

Observe that for any $v \in \mathbb{R}^{\ell}$ one has
\begin{multline*}
  \sum_{i, j = 1}^{\ell} \Big( (v_i v_j + 1) G_{ij}(x) + (1 - v_i v_j) H_{ij}(x) \Big) \\
  = \sum_{i, j = 1}^{\ell} v_i v_j \big( G_{ij}(x) - H_{ij}(x) \big) + h(x) 
  = \langle v, F(x) v \rangle + h(x).
\end{multline*}
Therefore, the maximum over all $v \in \mathbb{R}^{\ell}$ with $|v| \le 1$ of the left-hand side of this equality (which
is equal to $g(x)$) is attained at exactly the same $v$ as the maximum over all $v \in \mathbb{R}^{\ell}$ with 
$|v| \le 1$ of the right-hand side of this equality (which is equal to $\lambda_{\max}(F(x)) + h(x)$). Consequently,
one has
\[
  V(F(x)) = \Big\{ v \in \mathbb{R}^{\ell} \Bigm| |v| \le 1, \: \langle v, F(x) v \rangle = \lambda_{\max}(F(x)) \Big\}.
\]
With the use of the spectral decomposition of the matrix $F(x)$ one can easily verify that 
$\lambda_{\max}(F(x)) = \langle v, F(x) v \rangle$ for some $|v| \le 1$ if and only if $|v| = 1$ and $v$ is an
eigenvector of the matrix $F(x)$ corresponding to its maximal eigenvalue (i.e. $v \in \mathcal{E}_{\max}(F(x))$), which
implies the required result.  
\end{proof}

Thus, if an eigenvector $v$ with $|v| = 1$ of the matrix $F(x)$ corresponding to the maximal eigenvalue
$\lambda_{\max}(F(x))$ is computed, one can easily compute subgradients of DC components of the function
$\lambda_{\max}(F(\cdot))$ at the point $x$ with the use of subgradients of the functions $G_{ij}$ and $H_{ij}$.

\begin{remark}
Let us note once again that one can rewrite nonlinear semidefinite programming problem
\[
  \minimize \enspace f_0(x) \quad \text{subject to} \quad F(x) \preceq 0, \quad x \in Q.	
\]
where $Q$ is a closed convex set, as the following equivalent inequality constrained problem
\begin{equation} \label{prob:SemidefProg}
  \minimize \enspace f_0(x) \quad \text{subject to} \quad \lambda_{\max}(F(x)) \le 0, \quad x \in Q.
\end{equation}
In the case when the function $f_0$ is DC and the map $F$ is componentwise DC, one can easily extend all existing 
results and methods for inequality constrained DC optimization problems to the case of problem \eqref{prob:SemidefProg}
with use of Theorem~\ref{thrm:MaxEigenvalue_DC} and Proposition~\ref{prp:MaxEigenvalue_DCQuasidiff}. For the sake of
shortness, we leave the tedious task of explicitly reformulating existing results and methods in terms of problem
\eqref{prob:SemidefProg} to the interested reader.
\end{remark}

\section{Cone Constrained DC Optimization}
\label{sect:ConeConstrainedDC}

In the previous section, we pointed out how methods and results of DC optimization can be applied to nonlinear
semidefinite optimization problems with componentwise DC constraints. Let us now show how one can extend standard
results from DC optimization to the case when the semidefinite constraint is DC in the order-theoretic sense. Since such
extension does not rely on any particular properties of semidefinite problems (i.e. any properties of matrix-valued
mappings, the L\"{o}wner partial order, etc.) or the finite dimensional nature of the problem, following Lipp and Boyd
\cite{LippBoyd}, below we study optimality conditions for DC semidefinite programming problems in the more general
setting of DC cone constrained problems of the form
\begin{equation*}
\begin{split}
  &\minimize \enspace f_0(x) = g_0(x) - h_0(x), \\
  &\text{subject to} \enspace F(x) = G(x) - H(x) \preceq_K 0, \quad x \in Q.
\end{split}
  \qquad \qquad (\mathcal{P})
\end{equation*}
Here $g_0, h_0$ are real-valued closed convex functions defined on $\mathbb{R}^d$, $K$ is a proper cone
in a real Banach space $Y$ (that is, $K$ is a closed convex cone such that $K \cap (-K) = \{ 0 \}$), $\preceq_K$ is the
partial order induced by the cone $K$, i.e. $x \preceq_K y$ if and only if $y - x \in K$, the mappings 
$G, H \colon \mathbb{R}^d \to Y$ are convex with respect to the cone $K$ (or $K$-convex), that is,
\[
  G(\alpha x_1 + (1 - \alpha) x_2) \preceq_K \alpha G(x_1) + (1 - \alpha) G(x_2)
  \quad \forall \alpha \in [0, 1], \: x_1, x_2 \in \mathbb{R}^d
\]
and the same inequality holds for $H$, and, finally, $Q \subseteq \mathbb{R}^d$ is a closed convex set. Note that the
constraint $F(x) \preceq_K 0$ can be rewritten as $F(x) \in - K$.

Thus, the problem $(\mathcal{P})$ is a cone constrained DC optimization problem that consists in minimizing
the DC objective function $f_0$ subject to the generalized inequality (or cone) constraint that is DC with respect to
the cone $K$. In the case when $Y = \mathbb{S}^{\ell}$ and $K$ is the cone of positive semidefinite matrices, 
the problem $(\mathcal{P})$ becomes a standard nonlinear semidefinite programming problem.

\subsection{Some Properties of Convex Mappings}
\label{sect:ConvexMappings}

Before we proceed to the study of cone constrained DC optimization problems, let us first present two well-known
auxiliary results on convex mappings and convex multifunctions, whose formulations are tailored to our specific setting.
For the sake of completeness, we provide detailed proofs of these results.

We start with the following well-known characterisation of $K$-convex mappings in terms of their derivatives.

\begin{lemma} \label{lem:GenConvexityViaDerivative}
Let $X$ be a real Banach space. A G\^{a}teaux differentiable mapping $\Phi \colon X \to Y$ is $K$-convex if and only if
\begin{equation} \label{eq:GenConvexityViaDerivative}
  \Phi(x_1) - \Phi(x_2) \succeq_K \Phi'(x_2)(x_1 - x_2) \quad \forall x_1, x_2 \in X,
\end{equation}
where $\Phi'(x)$ is the G\^{a}teaux derivative of $\Phi$ at $x$.
\end{lemma}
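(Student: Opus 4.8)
The plan is to reduce this vector-valued statement to the classical scalar characterization of convexity via the gradient inequality, using the dual cone to \emph{scalarize} both the notion of $K$-convexity and the order inequality \eqref{eq:GenConvexityViaDerivative}. Recall that for a closed convex cone $K$ in a Banach space one has the bipolar identity $y \in K$ iff $\langle y^*, y \rangle \ge 0$ for all $y^* \in K^*$, where $K^* = \{ y^* \in Y^* : \langle y^*, y \rangle \ge 0 \ \forall y \in K \}$ is the dual cone and $\langle y^*, y \rangle$ denotes the value of $y^*$ at $y$; this follows from the Hahn--Banach separation theorem applied to the closed convex set $K$. Consequently, for any $a, b \in Y$ one has $a \succeq_K b$ iff $\langle y^*, a \rangle \ge \langle y^*, b \rangle$ for all $y^* \in K^*$.

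Fix an arbitrary $y^* \in K^*$ and introduce the real-valued function $\phi_{y^*}(x) = \langle y^*, \Phi(x) \rangle$, $x \in \mathbb{R}^d$. First I would observe that $\Phi$ is $K$-convex iff $\phi_{y^*}$ is convex for every $y^* \in K^*$: applying the functional $\langle y^*, \cdot \rangle$ to the defining membership $\alpha \Phi(x_1) + (1 - \alpha) \Phi(x_2) - \Phi(\alpha x_1 + (1 - \alpha) x_2) \in K$ and invoking the dual characterisation above turns $K$-convexity into the simultaneous convexity of all the scalar functions $\phi_{y^*}$. Next I would verify that $\phi_{y^*}$ is G\^{a}teaux differentiable with $D \phi_{y^*}(x)(v) = \langle y^*, D\Phi(x)(v) \rangle$; this is immediate from the continuity and linearity of $y^*$, which allow one to pull the functional through the limit defining the directional derivative, i.e. $\langle y^*, t^{-1}(\Phi(x + tv) - \Phi(x)) \rangle \to \langle y^*, D\Phi(x)(v) \rangle$ as $t \to 0$.

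With these two observations in hand the equivalence follows at once from the well-known scalar result: a G\^{a}teaux differentiable real-valued function on $\mathbb{R}^d$ is convex iff it satisfies the gradient inequality. Applying this to each $\phi_{y^*}$ shows that $\phi_{y^*}$ is convex iff $\phi_{y^*}(x_1) - \phi_{y^*}(x_2) \ge \langle y^*, D\Phi(x_2)(x_1 - x_2) \rangle$ for all $x_1, x_2 \in \mathbb{R}^d$. On the other hand, scalarising the order inequality \eqref{eq:GenConvexityViaDerivative} through the dual cone shows that it holds iff this very scalar inequality holds for every $y^* \in K^*$. Chaining the equivalences---$\Phi$ is $K$-convex $\iff$ each $\phi_{y^*}$ is convex $\iff$ each $\phi_{y^*}$ satisfies the gradient inequality $\iff$ \eqref{eq:GenConvexityViaDerivative} holds---completes the argument.

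I do not anticipate a serious obstacle. The only two points that require care are the bipolar characterisation of membership in $K$, for which closedness and convexity of $K$ are precisely what is needed, and the interchange of $y^*$ with the directional-derivative limit, which is justified by the boundedness of $y^*$. The pointedness of $K$ (the condition $K \cap (-K) = \{0\}$) plays no role here, so the lemma in fact holds for any closed convex cone.
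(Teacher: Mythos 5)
Your proof is correct, but it takes a genuinely different route from the paper's. You \emph{scalarize}: using the bipolar characterisation $y \in K \iff \langle y^*, y\rangle \ge 0$ for all $y^* \in K^*$ (Hahn--Banach separation, which is where closedness and convexity of $K$ enter), you reduce both $K$-convexity and the order inequality \eqref{eq:GenConvexityViaDerivative} to the family of scalar functions $\phi_{y^*} = \langle y^*, \Phi(\cdot)\rangle$, and then invoke the classical gradient-inequality characterisation of convexity for each $\phi_{y^*}$. The paper instead runs the classical scalar argument \emph{directly in the vector-valued setting}: for the forward direction it writes the convexity condition as a difference-quotient membership in $K$ and passes to the limit using only the closedness of $K$; for the converse it takes the two derivative inequalities at $x(\alpha) = \alpha x_1 + (1-\alpha)x_2$, multiplies them by $\alpha$ and $1-\alpha$, and adds, using only that $K$ is a convex cone (closed under addition and nonnegative scaling). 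The paper's proof is thus more elementary and self-contained---no dual space or separation theorem is needed, and closedness is used in only one of the two directions, whereas your bipolar step invokes it in both. What your approach buys is modularity: it reuses the known scalar theorem verbatim rather than re-proving it, and it is exactly the scalarization philosophy the paper itself employs for matrix convexity (the functions $G_z(\cdot) = \langle z, G(\cdot)z\rangle$ in the proof of Theorem~\ref{thrm:DCDecompMatrixValued}), there with the rank-one elements of the self-dual PSD cone playing the role of your $y^*$. Your closing observation that pointedness $K \cap (-K) = \{0\}$ is never used is accurate and applies equally to the paper's argument.
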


\begin{proof}
Let $\Phi$ be convex. Then by definition
\[
  \alpha \Phi(x_1) + (1 - \alpha) \Phi(x_2) - \Phi(\alpha x_1 + (1 - \alpha) x_2) \in K
  \quad \forall \alpha \in [0, 1], \: x_1, x_2 \in X.
\]
Since $K$ is a cone, for any $\alpha \in (0, 1]$ one has
\[
  \Phi(x_1) - \Phi(x_2) - \frac{1}{\alpha} \big( \Phi(x_2 + \alpha (x_1 - x_2)) - \Phi(x_2) \big) \in K.
\]
Passing to the limit as $\alpha \to + 0$ and taking into account the fact that the cone $K$ is closed, one obtains that
\[
  \Phi(x_1) - \Phi(x_2) - \Phi'(x_2)(x_1 - x_2) \in K \quad \forall x_1, x_2 \in X
\]
or, equivalently, condition \eqref{eq:GenConvexityViaDerivative} holds true.

Conversely, if condition \eqref{eq:GenConvexityViaDerivative} holds true then for all $x_1, x_2 \in X$ and
for any $\alpha \in [0, 1]$ one has
\begin{align*}
  \Phi(x_1) - \Phi(x(\alpha)) - (1 - \alpha) \Phi'(x(\alpha))(x_1 - x_2) &\in K, 
  \\
  \Phi(x_2) - \Phi(x(\alpha)) - \alpha \Phi'(x(\alpha))(x_2 - x_1) &\in K.
\end{align*}
where $x(\alpha) = \alpha x_1 + (1 - \alpha) x_2$. Multiplying the first expression by $\alpha$ and the second
expression by $1 - \alpha$ and bearing in mind the fact that a convex cone is closed under addition, one obtains that
\[
  \alpha \Phi(x_1) + (1 - \alpha) \Phi(x_2) - \Phi(x(\alpha)) \in K
  \quad \forall \alpha \in [0, 1], \: x_1, x_2 \in X,
\]
that is, $\Phi$ is $K$-convex.  
\end{proof}

Let us also present a lemma on solutions of perturbed convex generalized equations, based on some well-known results on
metric regularity of convex multifunctions (see, e.g. \cite{Robinson76}). For any metric space $(X, \rho)$ and all 
$x \in X$ denote $B(x, r) = \{ x' \in X \mid \rho(x', x) \le r \}$. If $X$ is a normed space, then $B_X = B(0, 1)$.

\begin{lemma} \label{lem:PerturbedSolution}
Let $X$ be a real Banach space and $Z$ be a metric space. Suppose that $M_z \colon X \rightrightarrows Y$, $z \in Z$, is
a family of closed convex multifunctions such that for some $z_* \in Z$ and $\overline{x} \in X$ one has 
$0 \in \interior M_{z_*}(X)$ and $0 \in M_{z_*}(\overline{x})$. Suppose also that the function 
$z \mapsto \dist(0, M_z(\overline{x}))$ is continuous at $z_*$ and for any $\varepsilon > 0$ there exists $\delta > 0$
such that
\[
  M_{z_*}(\overline{x} + B_X) \subseteq M_z(\overline{x} + B_X) + \varepsilon B_Y
  \quad \forall z \in B(z_*, \delta).
\]
Then there exist a neighbourhood $U$ of $z_*$ and a mapping $\xi \colon \mathcal{U} \to X$ such that
$0 \in M_z(\xi(z))$ for all $z \in U$, $\xi(z_*) = \overline{x}$, and $\xi(z) \to \overline{x}$ as $z \to z_*$.
\end{lemma}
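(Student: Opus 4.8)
The plan is to reduce the perturbed inclusion $0 \in M_z(x)$ to the metric regularity of the nominal multifunction $M_{z_*}$, and then to propagate this regularity to $M_z$ using the two continuity hypotheses. First I would apply Robinson's open mapping theorem for closed convex multifunctions \cite{Robinson76}: since $M_{z_*}$ is closed and convex, $0 \in \interior M_{z_*}(X)$, and $0 \in M_{z_*}(\overline{x})$, the multifunction $M_{z_*}$ is linearly open at $(\overline{x}, 0)$. Concretely, there are $a > 0$ and $r_0 \in (0, 1]$ with $B(0, a t) \subseteq M_{z_*}(\overline{x} + t B_X)$ for every $t \in (0, r_0]$; in particular $B(0, a') \subseteq M_{z_*}(\overline{x} + B_X)$ with $a' = a r_0$.

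Next I would transfer an approximate covering to $M_z$. Fix $\varepsilon \in (0, a'/2]$ and let $\delta$ be given by the inclusion hypothesis. For $z \in B(z_*, \delta)$ the chain $B(0, a') \subseteq M_{z_*}(\overline{x} + B_X) \subseteq M_z(\overline{x} + B_X) + \varepsilon B_Y$ holds, and a routine support-function argument (the set $M_z(\overline{x} + B_X)$ is convex) upgrades this to $B(0, a' - \varepsilon) \subseteq \overline{M_z(\overline{x} + B_X)}$. So far this yields solvability only at unit scale; the decisive step is to localize it near $\overline{x}$. Here I would use the second hypothesis: continuity of $z \mapsto \dist(0, M_z(\overline{x}))$ at $z_*$ (where the value is $0$) provides, for $z$ close to $z_*$, a point $w_z \in M_z(\overline{x})$ with $\|w_z\|$ close to $d(z) := \dist(0, M_z(\overline{x})) \to 0$. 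Anchoring the convex graph of $M_z$ at $(\overline{x}, w_z)$ and taking convex combinations with the points realising the covering above, I obtain, for every $\lambda \in (0, 1]$,
\[
  (1 - \lambda) w_z + \lambda B(0, a' - \varepsilon) \subseteq \overline{M_z(\overline{x} + \lambda B_X)}.
\]
This is precisely an approximate linear openness of $M_z$ at $(\overline{x}, w_z)$, with modulus of order $1/(a' - \varepsilon)$, valid at all small scales $\lambda$ --- manufactured from the fixed-scale covering together with the vanishing residual.

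With this small-scale covering in hand I would solve $0 \in M_z(x)$ by successive approximation on the graph. Starting from $(\overline{x}, w_z)$ and applying the previous inclusion at scale $\lambda_k \sim \|w_k\|/(a' - \varepsilon)$, I produce a sequence $(x_k, w_k) \in \operatorname{gph} M_z$ with $\|w_{k+1}\| \le \tfrac{1}{2} \|w_k\|$ and $\|x_{k+1} - x_k\| \le C (a' - \varepsilon)^{-1} \|w_k\|$, absorbing the slack coming from the closures into a summable sequence. The displacements then form a geometric series, so $x_k$ converges to some $\xi(z)$ with $\|\xi(z) - \overline{x}\| \le C' (a' - \varepsilon)^{-1} d(z)$, while $w_k \to 0$; closedness of the graph of $M_z$ yields $0 \in M_z(\xi(z))$. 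Setting $\xi(z_*) = \overline{x}$ and $\mathcal{U} = B(z_*, \delta)$, and noting that $a' - \varepsilon \ge a'/2$ stays bounded away from $0$ while $d(z) \to 0$, gives $\xi(z) \to \overline{x}$, as required.

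The main obstacle, and the crux of the argument, is exactly the passage from the nominal map to the perturbed one: metric regularity is available only for $M_{z_*}$, whereas the inclusion hypothesis is one-sided ($M_{z_*} \subseteq M_z + \varepsilon B_Y$) and holds only at unit scale, so it does not by itself furnish metric regularity of $M_z$ or a solution near $\overline{x}$. The key idea that resolves this is to use the small residual $w_z$ supplied by the distance-continuity hypothesis as an anchor and to exploit convexity of the graph to convert the fixed-scale approximate covering into genuine small-scale openness of $M_z$; this simultaneously localizes the solution and drives the iteration. The remaining care is bookkeeping --- keeping the iterates inside the region where the covering estimate is valid and handling the closures via vanishing slacks so that the limit lies exactly in the graph.
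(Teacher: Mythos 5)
Your proposal is correct and follows the paper's overall skeleton --- Robinson's open mapping theorem applied to $M_{z_*}$, transfer of the resulting covering to $M_z$ through the unit-scale inclusion hypothesis, and then solvability of $0 \in M_z(x)$ at distance of order $\dist(0, M_z(\overline{x}))$ from $\overline{x}$ --- but it executes the second half differently. After the transfer step the paper stays entirely within \cite{Robinson76}: it removes the $\varepsilon$-slack with Robinson's Lemma~2, obtaining a genuine, closure-free covering $\frac{\eta}{2} B_Y \subseteq M_z(\overline{x} + B_X)$, and then invokes Robinson's Theorem~2 as a black box, which yields the error bound $\dist(x, M_z^{-1}(0)) \le \frac{2}{\eta}\big( 1 + \| x - \overline{x} \| \big) \dist(0, M_z(x))$; evaluating at $x = \overline{x}$ immediately produces $\xi(z)$ with $\| \xi(z) - \overline{x} \| \le (4/\eta) \dist(0, M_z(\overline{x})) \to 0$. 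You instead keep the closure (your support-function cancellation only places $B(0, a' - \varepsilon)$ inside the \emph{closure} of $M_z(\overline{x} + B_X)$) and re-derive the needed estimate by hand, via graph-convexity anchoring at $(\overline{x}, w_z)$ and successive approximation; this is essentially an inline proof of Robinson's Theorem~2 (the Robinson--Ursescu iteration). The paper's route buys brevity: both the slack removal and the metric-regularity estimate are citations, and no iteration has to be managed. Your route buys self-containedness --- only the open-mapping statement is quoted --- and it makes the quantitative dependence $\| \xi(z) - \overline{x} \| \le C' (a' - \varepsilon)^{-1} \dist(0, M_z(\overline{x}))$ explicit, which is precisely the paper's $(4/\eta)$-bound. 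The price is the bookkeeping you yourself flag: your displayed inclusion is anchored only at $(\overline{x}, w_z)$, so each iteration step requires re-anchoring the covering at the current iterate $(x_k, w_k)$, with the effective covering radius degrading by a factor $1 + \| x_k - \overline{x} \|$ (hence the need to keep iterates in the unit ball, which holds for $z$ close to $z_*$ since $\dist(0, M_z(\overline{x})) \to 0$), and the closure slacks must be chosen summable so that the limit point lies in the closed graph. These details are routine but must be written out for the argument to be complete.
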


\begin{proof}
Since $0 \in \interior M_{z_*}(X)$ and $0 \in M_{z_*}(\overline{x})$, by \cite[Thm.~1]{Robinson76} there exists 
$\eta > 0$ such that $\eta B_Y \subseteq M_{z_*}(\overline{x} + B_X)$. By our assumption there exists $\delta > 0$ such
that
\[
  \eta B_Y \subseteq M_{z_*}(\overline{x} + B_X) \subseteq M_z(\overline{x} + B_X) + \frac{\eta}{3} B_Y
  \quad \forall z \in B(z_*, \delta),
\]
which by \cite[Thm.~2]{Robinson76} implies that for all $x \in X$ and $z \in B(z_*, \delta)$ one has
\[
  \dist(x, M_z^{-1}(0)) \le \frac{2}{\eta} \big( 1 + \| x - \overline{x} \| \big) \dist(0, M_z(x)).
\]
Putting $x = \overline{x}$ one gets that for any $z \in B(z_*, \delta)$ there exists $\xi(z) \in M_z^{-1}(0)$ such
that $\| \overline{x} - \xi(z) \| \le (4 / \eta) \dist(0, M_z(\overline{x}))$. Note that $\xi(z_*) = \overline{x}$,
since $0 \in M_{z_*}(\overline{x})$. Moreover, from the fact that the function $z \mapsto \dist(0, M_z(\overline{x}))$
is continuous at $z_*$ it follows that $\xi(z) \to \overline{x}$ as $z \to z_*$, which completes the proof.  
\end{proof}

\begin{remark}
Roughly speaking, the previous lemma states that if $0 \in \interior M_{z_*}(X)$ and $0 \in M_{z_*}(\overline{x})$, then
under certain semicontinuity assumptions for any $z$ in a neighbourhood of $z_*$ there exists a solution
$\xi(z)$ of the generalized equation $0 \in M_z(x)$ continuously depending on $z$ and such that 
$\xi(z_*) = \overline{x}$.
\end{remark}

\begin{corollary} \label{crlr:PerturbedSolution}
Let $X$ be a real Banach space, $W \subseteq X$ be a closed convex set, $E \subseteq Y$ be a proper cone, and 
$\Phi, \Psi \colon X \to Y$ be $E$-convex mappings. Suppose that $\Phi$ is continuous on $W$, $\Psi$ is continuously
Fr\'{e}chet differentiable on $W$, and the following constraint qualification holds true
\begin{equation} \label{eq:RegularPointConeConstr}
  0 \in \interior\Big\{ \Phi(x) - \Psi(x_*) - D \Psi(x_*)(x - x_*) + E \Bigm| x \in W \Big\}
\end{equation}
for some $x_* \in W$ such that $\Phi(x_*) - \Psi(x_*) \preceq_E 0$, where $D \Psi(x_*)$ is the Fr\'{e}chet derivative
of $\Psi$ at $x_*$. Then for any $\overline{x} \in W$ such that
\[
  \Phi(\overline{x}) - \Psi(x_*) - D \Psi(x_*)(\overline{x} - x_*) \preceq_E 0
\]
there exists a neighbourhood $\mathcal{U}$ of $x_*$ and a mapping $\xi \colon \mathcal{U} \cap W \to W$ such that
\[
  \Phi(\xi(z)) - \Psi(z) - D \Psi(z)(\xi(z) - z) \preceq_E 0 
  \quad \forall z \in \mathcal{U} \cap W
\]
$\xi(x_*) = \overline{x}$, and $\xi(z) \to \overline{x}$ as $z \to x_*$.
\end{corollary}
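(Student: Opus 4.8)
The plan is to deduce the corollary directly from Lemma~\ref{lem:PerturbedSolution} by encoding the linearized cone constraint as a parametric generalized equation. Concretely, with $Z = \mathcal{A}$ (equipped with the metric of $X$) and $z_* = x_*$, I would define the family of multifunctions $M_z \colon X \to Y$ by
\[
  M_z(x) = \Phi(x) - \Psi(z) - D\Psi(z)(x - z) + \mathcal{K} \quad (x \in \mathcal{A}),
  \qquad M_z(x) = \emptyset \quad (x \notin \mathcal{A}).
\]
Writing $\phi_z(x) = \Phi(x) - \Psi(z) - D\Psi(z)(x - z)$, the relation $0 \in M_z(x)$ holds precisely when $x \in \mathcal{A}$ and $\phi_z(x) \preceq_{\mathcal{K}} 0$, which is exactly the desired linearized constraint. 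The first routine step is to verify that each $M_z$ is a closed convex multifunction: $\phi_z$ is $\mathcal{K}$-convex, being the sum of the $\mathcal{K}$-convex map $\Phi$ and an affine map, so its $\mathcal{K}$-epigraph restricted to the convex set $\mathcal{A}$ has convex graph, and closedness follows from continuity of $\Phi$ on $\mathcal{A}$ together with closedness of $\mathcal{A}$ and $\mathcal{K}$.

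Next I would match the four hypotheses of Lemma~\ref{lem:PerturbedSolution} one by one. The range $M_{z_*}(X) = \{ \phi_{x_*}(x) + \mathcal{K} \mid x \in \mathcal{A} \}$ coincides with the set appearing in the constraint qualification \eqref{eq:RegularPointConeConstr}, so the requirement $0 \in \interior M_{z_*}(X)$ is nothing but \eqref{eq:RegularPointConeConstr}. The condition $0 \in M_{z_*}(\overline{x})$ is precisely the standing assumption $\Phi(\overline{x}) - \Psi(x_*) - D\Psi(x_*)(\overline{x} - x_*) \preceq_{\mathcal{K}} 0$ on $\overline{x}$. For the continuity of $z \mapsto \dist(0, M_z(\overline{x})) = \dist(0, \phi_z(\overline{x}) + \mathcal{K})$ at $z_*$, I would use that $b \mapsto \dist(0, b + \mathcal{K})$ is $1$-Lipschitz and that $z \mapsto \phi_z(\overline{x})$ is continuous, the latter because continuous Fr\'{e}chet differentiability of $\Psi$ yields continuity of $\Psi$ and of $z \mapsto D\Psi(z)$ in the operator norm.

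The crux is the uniform inclusion $M_{z_*}(\overline{x} + B_X) \subseteq M_z(\overline{x} + B_X) + \varepsilon B_Y$ for $z$ near $z_*$. Given $w \in M_{z_*}(\overline{x} + B_X)$, it is represented as $w = \phi_{z_*}(x) + k$ for some $x \in (\overline{x} + B_X) \cap \mathcal{A}$ and $k \in \mathcal{K}$; keeping the same $x$ and $k$ but switching the parameter produces $w' = \phi_z(x) + k \in M_z(\overline{x} + B_X)$, and
\[
  w - w' = \bigl(\Psi(z) - \Psi(z_*)\bigr) + \bigl(D\Psi(z) - D\Psi(z_*)\bigr)(x) - \bigl(D\Psi(z)(z) - D\Psi(z_*)(z_*)\bigr).
\]
The main obstacle, and the only place where genuine work is needed, is to bound this difference uniformly in $x$ over the bounded set $\overline{x} + B_X$. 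I expect to handle it by noting that $\|x\| \le \|\overline{x}\| + 1$ is bounded, so the middle term is dominated by $\|D\Psi(z) - D\Psi(z_*)\|_{\mathrm{op}}(\|\overline{x}\| + 1)$, which tends to $0$ as $z \to z_*$ by operator-norm continuity of the derivative; the remaining two terms tend to $0$ by continuity of $\Psi$ and local boundedness of $D\Psi$ near $z_*$. Thus a single $\delta$ works simultaneously for all admissible $x$, which yields the required inclusion.

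Finally, Lemma~\ref{lem:PerturbedSolution} furnishes a relatively open neighbourhood $\mathcal{U}$ of $x_*$ in $\mathcal{A}$ and a map $\xi$ with $0 \in M_z(\xi(z))$, $\xi(x_*) = \overline{x}$, and $\xi(z) \to \overline{x}$ as $z \to x_*$. Since $0 \in M_z(\xi(z))$ forces $\xi(z) \in \mathcal{A}$ and $\phi_z(\xi(z)) \preceq_{\mathcal{K}} 0$, the map $\xi$ has exactly the asserted properties; writing $\mathcal{U} = \mathcal{U}_X \cap \mathcal{A}$ for an open set $\mathcal{U}_X \subseteq X$ then recovers the precise formulation of the corollary.
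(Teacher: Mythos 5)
Your proposal is correct and follows essentially the same route as the paper's own proof: the same parametric multifunction $M_z(x) = \Phi(x) - \Psi(z) - D\Psi(z)(x-z) + \mathcal{K}$ on $\mathcal{A}$ (empty outside), the same verification of closedness/convexity and of the hypotheses of Lemma~\ref{lem:PerturbedSolution}, and the same continuity estimates exploiting the continuous Fr\'{e}chet differentiability of $\Psi$ on the bounded set $\overline{x} + B_X$. The only differences are cosmetic rearrangements of the perturbation bound and your slightly cleaner use of the $1$-Lipschitz property of $b \mapsto \dist(0, b + \mathcal{K})$ for the continuity of the distance function.
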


\begin{proof}
For any $z \in X$ introduce the $E$-convex function $\Phi_z \colon X \to Y$ defined as 
$\Phi_z(x) = \Phi(x) - \Psi(z) - D \Psi(z)(x - z)$ and the set-valued mapping
\begin{equation} \label{eq:ConeConstrMultifunc}
  M_z(x) = \begin{cases}
    \Phi_z(x) + E, & \text{if } x \in W, \\
    \emptyset, & \text{if } x \notin W.
  \end{cases}
\end{equation}
The multifunction $M_z$ is closed due to the facts that the mapping $\Phi_z(\cdot)$ is continuous and the sets
$W$ and $E$ are closed. Moreover, this multifunction is convex. 

Indeed, by the convexity of $\Phi$ for any $x_1, x_2 \in W$ and all $\alpha \in [0, 1]$ one has
\[
  \alpha \Phi_z(x_1) + (1 - \alpha) \Phi_z(x_2) \in \Phi_z(\alpha x_1 + (1 - \alpha) x_2)) + E,
\]
which due to the convexity of the cone $E$ implies that
\begin{align*}
  \alpha M_z(x_1) + (1 - \alpha) M_z(x_2) 
  &\subseteq \Phi_z(\alpha x_1 + (1 - \alpha) x_2)) + E + \alpha E + (1 - \alpha) E
  \\
  &\subseteq M_z(\alpha x_1 + (1 - \alpha) x_2) 
\end{align*}
for all $x_1, x_2 \in W$ and $\alpha \in [0, 1]$, that is, the graph of $M_z$ is convex. 

Our aim is to apply Lemma~\ref{lem:PerturbedSolution} with $Z = W$ and $z_* = x_*$. Indeed, by definition 
$0 \in M_{z_*}(\overline{x})$, while condition \eqref{eq:RegularPointConeConstr} implies that 
$0 \in \interior M_{z_*}(X)$. 

From the fact that $\Psi$ is continuously Fr\'{e}chet differentiable on $W$ it follows that for any
$\varepsilon > 0$ there exists $\delta < \min\{ 1, \varepsilon/ 3(1 + \| D \Psi(z_*) \|) \}$ such that
\[
  \| \Psi(z) - \Psi(z_*) \| < \frac{\varepsilon}{3}, \quad
  \| D \Psi(z) - D \Psi(z_*) \| < \frac{\varepsilon}{3(2 + \| \overline{x} \| + \| z_* \|)}
\]
for all $z \in B(z_*, \delta) \cap W$. Choose any $y \in M_{z_*}(\overline{x} + B_X)$. By definition there
exist $x \in (\overline{x} + B_X) \cap W$ and $v \in E$ such that $y = \Phi_{z_*}(x) + v$. Observe that
\begin{multline*}
  \| \Phi_z(x) + v - y \| = \| \Phi_z(x) - \Phi_{z_*}(x) \| 
  \\
  \le \| \Psi(z) - \Psi(z_*) \| + \| D \Psi(z) - D \Psi(z_*) \| \| x - z \| + \| D \Psi(z_*) \| \| z - z_* \|  
  < \varepsilon
\end{multline*}
for all $z \in B(z_*, \delta) \cap W$, which implies that
\[
  M_{z_*}(\overline{x} + B_X) \subseteq M_z(\overline{x} + B_X) + \varepsilon B_Y 
  \quad \forall z \in B(z_*, \delta) \cap W.
\]
Thus, it remains to show that the restriction of the function $\dist(0, M_z(\overline{x}))$ to $W$ is
continuous. 

By definition $\dist(0, M_z(\overline{x})) = \dist(\Phi_z(\overline{x}), - E)$ (see \eqref{eq:ConeConstrMultifunc}).
With the use of the fact that $\Psi$ is continuously Fr\'{e}chet differentiable one obtains that for any 
$\varepsilon > 0$ there exists $r < \min\{ 1, \varepsilon/3 (1 + \| D \Psi(z_*) \|) \}$ such that
\[
  \| \Psi(z) - \Psi(z_*) \| < \frac{\varepsilon}{3}, \quad
  \| D \Psi(z) - D \Psi(z_*) \| < \frac{\varepsilon}{3 (\| \overline{x} \| + \| z_* \| + 1)}
\]
for all $z \in B(z_*, r) \cap W$. Therefore for any such $z$ one has
\begin{align*}
  \| \Phi_z(\overline{x}) - \Phi_{z_*}(\overline{x}) \| &\le \| \Psi(z) - \Psi(z_*) \| 
  \\
  &+ \| D \Psi(z) - D \Psi(z_*) \| \| \overline{x} - z \| + \| D \Psi(z_*) \| \| z - z_* \| 
  < \varepsilon,
\end{align*}
which implies that for any $z \in B(x_*, r) \cap W$ the following inequality holds true:
\[
  \dist(0, M_z(\overline{x})) = \dist(\Phi_z(\overline{x}), -E)
  \le \| \Phi_z(\overline{x}) - \Phi_{z_*}(\overline{x}) \| < \varepsilon
\]
(here we used the fact that $\Phi_{z_*}(\overline{x}) \in -E$). Thus, all assumptions of
Lemma~\ref{lem:PerturbedSolution} with $Z = W$ and $z_* = x_*$ are valid, and by this lemma there exists a
required mapping $\xi(z)$.  
\end{proof}

\subsection{Optimality Conditions}
\label{sect:OptimalityConditions}

Let us extend well-known local optimality conditions for constrained DC optimization problems to the case of 
the problem $(\mathcal{P})$. To the best of the author's knowledge, standard subdifferential calculus cannot be extended
to the case of convex matrix-valued mappings and many other $K$-convex vector-valued maps, which makes it very difficult
to deal with subdifferentials of such functions. Therefore, below we suppose that the mapping $H$ (the $K$-concave part
of $F$) is continuously differentiable, but do not impose any smoothness assumptions on the objective function $f_0$.

\begin{theorem} \label{thrm:OptCond}
Let $x_*$ be a locally optimal solution of the problem $(\mathcal{P})$ and the mapping $H$ be Fr\'{e}chet
differentiable at $x_*$. Then for any $v \in \partial h_0(x_*)$ the point $x_*$ is a globally optimal solution of the
following convex programming problem:
\begin{equation} \label{prob:AuxiliaryConvexProb}
\begin{split}
  &\minimize \enspace g_0(x) - h_0(x_*) - \langle v, x - x_* \rangle \\
  &\mathrm{subject~to} \enspace G(x) - H(x_*) - D H(x_*)(x - x_*) \preceq_K 0, \quad x \in Q,
\end{split}
\end{equation}
where $D H(x_*)$ is the Fr\'{e}chet derivative of $H$ at $x_*$.
\end{theorem}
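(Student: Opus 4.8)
The plan is to show that $x_*$ is a local minimizer of the auxiliary convex problem \eqref{prob:AuxiliaryConvexProb} and then invoke the elementary fact that a local minimizer of a convex function over a convex set is automatically a global one. First I would record that problem \eqref{prob:AuxiliaryConvexProb} is genuinely convex. Its objective $\phi(x) = g_0(x) - h_0(x_*) - \langle v, x - x_* \rangle$ is convex, since $g_0$ is convex and the remaining terms are affine in $x$. Its feasible set is convex because the mapping $x \mapsto G(x) - H(x_*) - D H(x_*)(x - x_*)$ is $K$-convex (it is $G$ minus an affine function), so the $K$-sublevel set $\{ x \mid G(x) - H(x_*) - D H(x_*)(x - x_*) \preceq_K 0 \}$ is convex, and intersecting it with the convex set $A$ preserves convexity.

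Two comparisons drive the argument. For the objective, the subgradient inequality for the convex function $h_0$ yields $h_0(x) \ge h_0(x_*) + \langle v, x - x_* \rangle$ for every $v \in \partial h_0(x_*)$, whence $\phi(x) \ge g_0(x) - h_0(x) = f_0(x)$ for all $x$, with equality at $x = x_*$. For the constraint, I would apply Lemma~\ref{lem:GenConvexityViaDerivative} to the $K$-convex function $H$ to obtain $H(x) - H(x_*) \succeq_K D H(x_*)(x - x_*)$, that is, $H(x) - H(x_*) - D H(x_*)(x - x_*) \succeq_K 0$.

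Combining these, I claim that every $x$ feasible for \eqref{prob:AuxiliaryConvexProb} is feasible for $(\mathcal{P})$. Writing
\[
  G(x) - H(x) = \big[ G(x) - H(x_*) - D H(x_*)(x - x_*) \big] - \big[ H(x) - H(x_*) - D H(x_*)(x - x_*) \big],
\]
the first bracket lies in $-K$ by the constraint of \eqref{prob:AuxiliaryConvexProb}, while the second lies in $K$ by the consequence of Lemma~\ref{lem:GenConvexityViaDerivative} noted above; since $K$ is a convex cone closed under addition, the difference lies in $-K$, i.e. $F(x) \preceq_K 0$. One also checks directly that $x_*$ itself is feasible for \eqref{prob:AuxiliaryConvexProb}: at $x = x_*$ the constraint reduces to $G(x_*) - H(x_*) = F(x_*) \preceq_K 0$, which holds because $x_*$ is feasible for $(\mathcal{P})$, and $x_* \in A$.

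The conclusion then follows quickly. Since $x_*$ is locally optimal for $(\mathcal{P})$, there is a neighbourhood $\mathcal{U}$ of $x_*$ on which $f_0(x_*) \le f_0(x)$ for every $x$ feasible for $(\mathcal{P})$. For any $x$ feasible for \eqref{prob:AuxiliaryConvexProb} and lying in $\mathcal{U}$, the feasibility inclusion makes $x$ feasible for $(\mathcal{P})$, so $\phi(x_*) = f_0(x_*) \le f_0(x) \le \phi(x)$; thus $x_*$ is a local, and hence global, minimizer of \eqref{prob:AuxiliaryConvexProb}. The only delicate point is the bookkeeping of the cone order in the feasibility inclusion, namely tracking which bracket lies in $K$ and which in $-K$, while everything else reduces to standard convexity arguments.
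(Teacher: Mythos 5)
Your proof is correct and takes essentially the same route as the paper's: both rest on the majorization $g_0(x) - h_0(x_*) - \langle v, x - x_*\rangle \ge f_0(x)$ with equality at $x_*$ (the subgradient inequality for $h_0$) and on Lemma~\ref{lem:GenConvexityViaDerivative} applied to $H$ to show that every feasible point of \eqref{prob:AuxiliaryConvexProb} is feasible for $(\mathcal{P})$. The only difference is organizational: the paper inlines the ``local minimizer of a convex problem is global'' step as an explicit segment-plus-contradiction argument, whereas you invoke it as a standard fact after verifying local optimality of $x_*$ for \eqref{prob:AuxiliaryConvexProb} directly.
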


\begin{proof}
Denote by $\omega_v(x) = g_0(x) - h_0(x_*) - \langle v, x - x_* \rangle$, $x \in \mathbb{R}^d$, the objective function
of problem \eqref{prob:AuxiliaryConvexProb}. This function is convex. Moreover, taking into account the fact that by 
the definition of subgradient $h_0(x) \ge h_0(x_*) + \langle v, x - x_* \rangle$, one obtains that 
$\omega_v(x) \ge f_0(x)$ for all $x \in \mathbb{R}^d$ and $\omega_v(x_*) = f_0(x_*)$.

By contradiction, suppose that there exists $v \in \partial h_0(x_*)$ such that the point $x_*$ is not a
globally optimal solution of problem \eqref{prob:AuxiliaryConvexProb}, i.e. there exists a feasible point $x$ of this
problem such that $\omega_v(x) < \omega_v(x_*)$. Define $x(\alpha) = \alpha x + (1 - \alpha) x_*$. Then
\begin{equation} \label{eq:StrictDescentDirect}
  f_0(x(\alpha)) \le \omega_v(x(\alpha)) \le \alpha \omega_v(x) + (1 - \alpha) \omega_v(x_*) < \omega_v(x_*) = f_0(x_*)
\end{equation}
for all $\alpha \in (0, 1]$, thanks to the convexity of $\omega_v$.

Let us check that $x(\alpha)$ is a feasible point of the problem $(\mathcal{P})$ for all $\alpha \in [0, 1]$.
Then with the use of \eqref{eq:StrictDescentDirect} one can conclude that $x_*$ is not a locally optimal solution of
the problem $(\mathcal{P})$, which contradicts the assumption of the theorem.

Indeed, by Lemma~\ref{lem:GenConvexityViaDerivative} one has $H(x(\alpha)) - H(x_*) - D H(x_*)(x(\alpha) - x_*) \in K$
for all $\alpha \in [0, 1]$. Adding and subtracting $G(x(\alpha))$, one obtains that 
\[
  - F(x(\alpha)) + G(x(\alpha)) - H(x_*) - D H(x_*)(x(\alpha) - x_*) \in K \quad \forall \alpha \in [0, 1]
\]
or, equivalently, 
\[
  F(x(\alpha)) \preceq_K G(x(\alpha)) - H(x_*) - D H(x_*)(x(\alpha) - x_*) \quad \forall \alpha \in [0, 1]. 
\]
Hence taking into account the fact that the point $x(\alpha)$ is feasible for problem \eqref{prob:AuxiliaryConvexProb}
due to the convexity of this problem, one can conclude that $F(x(\alpha)) \preceq_K 0$. Thus, $x(\alpha)$ is a feasible
point of the problem $(\mathcal{P})$ and the proof is complete.  
\end{proof}

Let us reformulate optimality conditions from the previous theorem. Denote by $\Omega(x_*)$ the feasible region of
problem \eqref{prob:AuxiliaryConvexProb} and for any convex set $V \subseteq \mathbb{R}^d$ and $x \in V$ denote by 
$N_V(x) = \{ v \in \mathbb{R}^d \mid \langle v, z - x \rangle \le 0 \: \forall z \in V \}$ \textit{the normal cone}
to $V$ at $x$.

\begin{corollary} \label{crlr:Criticality}
Let $x_*$ be a locally optimal solution of the problem $(\mathcal{P})$ and the map $H$ be Fr\'{e}chet differentiable at
$x_*$. Then 
\[
  \partial h_0(x_*) \subseteq \partial g_0(x_*) + N_{\Omega(x_*)}(x_*).
\]
\end{corollary}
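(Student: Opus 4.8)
The plan is to combine Theorem~\ref{thrm:OptCond} with the standard first-order optimality condition for minimizing a convex function over a convex set. First I would fix an arbitrary subgradient $v \in \partial h_0(x_*)$. By Theorem~\ref{thrm:OptCond}, the point $x_*$ is a global minimizer over the convex feasible set $\Omega(x_*)$ of the convex objective $\omega_v(x) = g_0(x) - h_0(x_*) - \langle v, x - x_* \rangle$. Since minimizing $\omega_v$ over $\Omega(x_*)$ is the same as minimizing the extended-real-valued convex function $\omega_v + \delta_{\Omega(x_*)}$ over all of $\mathbb{R}^d$, where $\delta_{\Omega(x_*)}$ is the indicator function of $\Omega(x_*)$, this global optimality is equivalent to the inclusion $0 \in \partial(\omega_v + \delta_{\Omega(x_*)})(x_*)$.

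Next, I would apply the Moreau--Rockafellar sum rule to split this subdifferential. Because $g_0$ is a real-valued, hence continuous, convex function on $\mathbb{R}^d$, the function $\omega_v$ is finite and continuous everywhere, so the sum rule holds without any additional constraint qualification and yields $\partial(\omega_v + \delta_{\Omega(x_*)})(x_*) = \partial \omega_v(x_*) + \partial \delta_{\Omega(x_*)}(x_*)$. Here $\partial \delta_{\Omega(x_*)}(x_*) = N_{\Omega(x_*)}(x_*)$ directly from the definition of the normal cone, while $\partial \omega_v(x_*) = \partial g_0(x_*) - v$, since the constant $h_0(x_*)$ drops out and the affine term $-\langle v, \cdot - x_* \rangle$ contributes the constant gradient $-v$.

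Substituting these expressions into $0 \in \partial \omega_v(x_*) + N_{\Omega(x_*)}(x_*)$ gives $0 \in \partial g_0(x_*) - v + N_{\Omega(x_*)}(x_*)$, that is, $v \in \partial g_0(x_*) + N_{\Omega(x_*)}(x_*)$. As $v \in \partial h_0(x_*)$ was arbitrary, the desired inclusion $\partial h_0(x_*) \subseteq \partial g_0(x_*) + N_{\Omega(x_*)}(x_*)$ follows. The only step that would ordinarily require care is the validity of the sum rule, which is precisely where a constraint qualification is normally invoked; here it comes for free because $g_0$ is finite-valued and therefore continuous on all of $\mathbb{R}^d$, so the remaining manipulations are entirely routine.
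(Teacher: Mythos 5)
Your proof is correct and follows essentially the same route as the paper: fix $v \in \partial h_0(x_*)$, invoke Theorem~\ref{thrm:OptCond}, apply the first-order optimality condition $0 \in \partial \omega_v(x_*) + N_{\Omega(x_*)}(x_*)$, and compute $\partial \omega_v(x_*) = \partial g_0(x_*) - v$. The only difference is that the paper cites this optimality condition directly (from Ioffe--Tikhomirov), whereas you derive it via the indicator function and the Moreau--Rockafellar sum rule, which is just an expanded justification of the same step.
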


\begin{proof}
Fix any $v \in \partial h_0(x_*)$. By Theorem~\ref{thrm:OptCond} the point $x_*$ is a globally optimal solution of the
convex problem \eqref{prob:AuxiliaryConvexProb}. Applying standard necessary and sufficient optimality conditions for a
convex function on a convex set (see, e.g. \cite[Thm.~1.1.2']{IoffeTihomirov}), one obtains that
$0 \in \partial \omega_v(x_*) + N_{\Omega(x_*)}(x_*)$, where, as above,
$\omega_v(x) = g_0(x) - h_0(x_*) - \langle v, x - x_* \rangle$ is the objective function of problem
\eqref{prob:AuxiliaryConvexProb}. Since $\partial \omega(x_*) = \partial g_0(x_*) - v$, one gets that
$v \in \partial g_0(x_*) + N_{\Omega(x_*)}(x_*)$, which implies the desired result.  
\end{proof}

In the case when a natural constraint qualification (namely, Slater's condition for problem
\eqref{prob:AuxiliaryConvexProb}) holds at $x_*$, one can show that optimality conditions from
Theorem~\ref{thrm:OptCond} coincide with standard optimality conditions for cone constrained optimization problems (see,
e.g. \cite{BonnansShapiro}). To this end, denote by $Y^*$ the topological dual space of $Y$ and by $\langle \cdot, \cdot
\rangle$ the canonical duality pairing between $Y$ and $Y^*$, that is, $\langle y^*, y \rangle = y^*(y)$ for any $y^*
\in Y^*$ and $y \in Y$. 

Let $K^* = \{ y^* \in Y^* \mid \langle y^*, y \rangle \ge 0 \: \forall y \in K \}$ be \textit{the dual cone} of $K$ and
for any $\lambda \in Y^*$ define $L(x, \lambda) = f_0(x) + \langle \lambda, F(x) \rangle$.

\begin{corollary} \label{crlr:OptimalityCond_Lagrange}
Let $x_*$ be a locally optimal solution of the problem $(\mathcal{P})$ and the mappings $G$ and $H$ be
Fr\'{e}chet differentiable at $x_*$. Suppose also that the following constraint qualification holds true:
\[
  0 \in \interior\Big\{ G(x) - H(x_*) - D H(x_*)(x - x_*) + K \Bigm| x \in Q \Big\}
\]
(if $K$ has nonempty interior, it is sufficient to suppose that there exists $x \in Q$ such that
$G(x) - H(x_*) - D H(x_*)(x - x_*) \in - \interior K$). Then for any $v \in \partial h_0(x_*)$ there exists a multiplier
$\lambda_* \in K^*$ such that $\langle \lambda_*, F(x_*) \rangle = 0$ and
\[
  v \in \partial g_0(x_*) + D \Big( \langle \lambda_*, F(\cdot) \rangle \Big) (x_*) + N_Q(x_*).
\] 
In particular, if both $g_0$ and $h_0$ are differentiable at $x_*$, then there exists $\lambda_* \in K^*$ such that
$\langle \lambda_*, F(x_*) \rangle = 0$ and $\langle D_x L(x_*, \lambda_*), x - x_* \rangle \ge 0$ for all $x \in Q$.
\end{corollary}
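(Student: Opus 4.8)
The plan is to apply the standard Lagrange multiplier theory for convex cone constrained problems to the auxiliary convex problem \eqref{prob:AuxiliaryConvexProb} furnished by Theorem~\ref{thrm:OptCond}. Fix any $v \in \partial h_0(x_*)$ and abbreviate the constraint mapping of \eqref{prob:AuxiliaryConvexProb} by $\widetilde{G}(x) = G(x) - H(x_*) - D H(x_*)(x - x_*)$. Since $G$ is $K$-convex and the remaining terms are affine, $\widetilde{G}$ is $K$-convex, so \eqref{prob:AuxiliaryConvexProb} is a genuine convex cone constrained problem, and by Theorem~\ref{thrm:OptCond} the point $x_*$ is its global minimizer. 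The key observation is that the constraint qualification assumed in the corollary is precisely Robinson's constraint qualification $0 \in \interior\{ \widetilde{G}(x) + K \mid x \in A \}$ for this problem.

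Next I would invoke the multiplier-existence theorem for convex cone constrained problems (see, e.g. \cite{BonnansShapiro}): under Robinson's CQ there exists $\lambda_* \in K^*$ such that $x_*$ minimizes the Lagrangian $\omega_v(x) + \langle \lambda_*, \widetilde{G}(x) \rangle$ over $x \in A$ and the complementary slackness condition $\langle \lambda_*, \widetilde{G}(x_*) \rangle = 0$ holds, where $\omega_v(x) = g_0(x) - h_0(x_*) - \langle v, x - x_* \rangle$ is the objective of \eqref{prob:AuxiliaryConvexProb}. Since $\widetilde{G}(x_*) = G(x_*) - H(x_*) = F(x_*)$, complementary slackness reads $\langle \lambda_*, F(x_*) \rangle = 0$, as required.

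It then remains to convert the statement ``$x_*$ minimizes the convex Lagrangian over $A$'' into the claimed inclusion. Applying the first-order optimality condition for a convex function on a convex set (exactly as in Corollary~\ref{crlr:Criticality}) gives $0 \in \partial\big( \omega_v(\cdot) + \langle \lambda_*, \widetilde{G}(\cdot) \rangle \big)(x_*) + N_A(x_*)$. Because $G$ and $H$ are Fr\'{e}chet differentiable at $x_*$, the term $\langle \lambda_*, \widetilde{G}(\cdot) \rangle$ is differentiable there, and since $D \widetilde{G}(x_*) = D G(x_*) - D H(x_*) = D F(x_*)$ the affine correction in $\widetilde{G}$ does not affect the derivative, so this term contributes $D(\langle \lambda_*, F(\cdot) \rangle)(x_*)$. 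Combining this with $\partial \omega_v(x_*) = \partial g_0(x_*) - v$ via the sum rule and rearranging yields $v \in \partial g_0(x_*) + D(\langle \lambda_*, F(\cdot) \rangle)(x_*) + N_A(x_*)$. For the final assertion, taking $g_0, h_0$ differentiable forces $v = \nabla h_0(x_*)$ and collapses the inclusion to $-D_x L(x_*, \lambda_*) \in N_A(x_*)$, which by the definition of the normal cone is exactly $\langle D_x L(x_*, \lambda_*), x - x_* \rangle \ge 0$ for all $x \in A$.

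The only delicate point I anticipate is invoking the correct abstract multiplier theorem in the Banach space $Y$ under the given Robinson-type CQ, together with verifying the parenthetical sufficient condition: when $\interior K \ne \emptyset$, the existence of $x \in A$ with $G(x) - H(x_*) - D H(x_*)(x - x_*) \in - \interior K$ does imply $0 \in \interior\{ \widetilde{G}(x) + K \mid x \in A \}$. This implication is routine, and the subdifferential sum rule causes no trouble precisely because the Lagrangian's constraint term is differentiable at $x_*$; hence the substance of the proof is the clean reduction to the convex problem \eqref{prob:AuxiliaryConvexProb} already provided by Theorem~\ref{thrm:OptCond}.
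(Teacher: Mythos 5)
Your proof is correct and takes essentially the same route as the paper: the paper's own (two-line) proof also rewrites the auxiliary convex problem \eqref{prob:AuxiliaryConvexProb} furnished by Theorem~\ref{thrm:OptCond} as a convex cone constrained problem and invokes the standard multiplier theorem under Robinson's constraint qualification (citing \cite[Theorem~3.6 and Proposition~2.106]{BonnansShapiro}). Your write-up simply supplies the details the paper leaves implicit -- complementary slackness via $\widetilde{G}(x_*) = F(x_*)$, the subdifferential sum rule using differentiability of the constraint term, and the reduction to the variational inequality in the smooth case -- all of which are carried out correctly.
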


\begin{proof}
Rewriting problem \eqref{prob:AuxiliaryConvexProb} as the convex cone constrained problem
\begin{align*}
  &\minimize \enspace g_0(x) - h_0(x_*) - \langle v, x - x_* \rangle
  \\
  &\text{subject to} \enspace G(x) - H(x_*) - D H(x_*)(x - x_*) \in -K, \quad x \in Q
\end{align*}
and applying standard necessary and sufficient optimality conditions for convex cone constrained optimization problems
(see, for example, \cite[Thm.~3.6 and Prp.~2.106]{BonnansShapiro}), we arrive at the required result.  
\end{proof}

\begin{remark}
In the case of semidefinite programs, i.e. when $Y = \mathbb{S}^{\ell}$ and $K$ is the cone of positive semidefinite
matrices, the dual cone $K^*$ coincides with $K$ (if we identify the dual of $\mathbb{S}^{\ell}$ with the space
$\mathbb{S}^{\ell}$ itself), and thus the multiplier $\lambda_*$ from the previous corollary is a positive semidefinite
matrix. In addition, the constraint qualification from the corollary takes the form: there exists $x \in Q$ such that
the matrix $G(x) - H(x_*) - D H(x_*)(x - x_*)$ is negative definite.
\end{remark}

\section{DCA for Cone Constrained DC Optimization}
\label{sect:DCA}

The optimality conditions from Theorem~\ref{thrm:OptCond} can be applied to a convergence analysis of a method for
solving cone constrained DC optimization problems proposed in \cite{LippBoyd}, which can be viewed as an extension of
the renown DCA \cite{LeThiPhamDinh2014,LeThiPhamDinh2014b,LeThiDinh2018} to the case of DC problems with cone
constraints. A general scheme of this method for the problem $(\mathcal{P})$ is given in Algorithmic
Pattern~\ref{alg:CCP}. Following \cite{AckooijDeOliveira2019}, we use the term \textit{algorithmic pattern}, since
Algorithmic Pattern~\ref{alg:CCP} is not an algorithm per se, but rather a theoretical scheme (a pattern) that can be
used to define a local search method for the problem $(\mathcal{P})$ by specifying a method for solving the convex
subproblem on Step~2 and a stopping criterion.

\begin{algorithm}[ht!]	\label{alg:CCP}
\caption{DCA for Cone Constrained DC Optimization.}

\noindent\textbf{Initialization.} {Choose a feasible initial point $x_0$ and set $n := 0$.}

\noindent\textbf{Step~1.} {Compute $v_n \in \partial h_0(x_n)$ and $D H(x_n)$.}

\noindent\textbf{Step~2.} {Set the value of $x_{n + 1}$ to an optimal solution of the convex problem
\begin{align*}
  &\minimize \enspace g_0(x) - \langle v_n, x \rangle \\
  &\text{subject to} \enspace G(x) - H(x_n) - D H(x_n)(x - x_n) \preceq_K 0, \quad x \in Q.
\end{align*}
If a stopping criterion is met, \textbf{Stop}. Otherwise, put $n := n + 1$ and go to \textbf{Step 1}.
}
\end{algorithm}

A stopping criterion for Algorithmic Pattern~\ref{alg:CCP} is discussed below (see
Remark~\ref{rmrk:CCP_StoppingCriterion}). Here we only impose one assumption. Namely, we suppose that this criterion is
satisfied, if $x_{n + 1} = x_n$, i.e. the method terminates, if it fails to improve the current iterate $x_n$.

Let us also note that the point $x_{n + 1}$ on Step~2 of Algorithmic Pattern~\ref{alg:CCP} is not correctly defined in
the general case, since the corresponding convex problem might not have optimal solutions. One can ensure the existence
of optimal solutions by imposing suitable coercivity/compactness assumptions on the original problem 
(cf.~Lemma~\ref{lem:PenalizedProblem_WellPosedness}). For the sake of shortness, below we always assume that iterations
of Algorithmic Pattern~\ref{alg:CCP} are correctly defined. Finally, it should be mentioned that the convex subproblem
on Step~2 of Algorithmic Pattern~\ref{alg:CCP} can be solved with the use of interior point methods (see, e.g. 
\cite[Sect.~11.6]{BoydVandenberghe} and \cite{NesterovNemirovski,BenTalNemirovski}), augmented Lagrangian methods
\cite{KocvaraStingl,Stingl}, etc.

Our aim is to prove a convergence theorem for Algorithmic Pattern~\ref{alg:CCP}. Clearly, in the nonsmooth case (more
precisely, when $h_0$ is nonsmooth) one cannot expect a sequence $\{ x_n \}$ generated by this algorithm to converge to
a point $x_*$ satisfying optimality conditions from Theorem~\ref{thrm:OptCond} for all $v \in \partial h(x_*)$.
Furthermore, these optimality conditions are often too restrictive for applications, since they require the knowledge of
the entire subdifferential $\partial h(x_*)$, which might make verification of these conditions too computationally
expensive or even impossible. That is why one usually establishes a convergence of DC optimization methods to so-called
\textit{critical} points \cite{LeThiDinh2018,AckooijDeOliveira2019,JokiBagirov2020}. Recall that a point $x_*$ is said
to be \textit{critical} for the problem $(\mathcal{P})$, if the following condition holds true:
\[
  \partial h_0(x_*) \cap \Big( \partial g_0(x_*) + N_{\Omega(x_*)}(x_*) \Big) \ne \emptyset.
\]
Note that this condition is satisfied if and only if \textit{there exists} $v \in \partial h_0(x_*)$ such that $x_*$ is
a globally optimal solution of convex problem \eqref{prob:AuxiliaryConvexProb} (see Theorem~\ref{thrm:OptCond}). Hence,
in particular, if a point $x_n$ on Step~2 of Algorithmic Pattern~\ref{alg:CCP} is not critical for the problem
$(\mathcal{P})$, then $x_n$ is \textit{not} an optimal solution of the corresponding convex subproblem. In other words,
if Algorithmic Pattern~\ref{alg:CCP} terminates on iteration $n \in \mathbb{N}$, then $x_n$ is a critical point for the
problem $(\mathcal{P})$.

The proof of the following theorem was largely inspired by the convergence analysis of an algorithmic pattern for
inequality constrained DC optimization problems from \cite[Section~3.1]{AckooijDeOliveira2019}. However, let us note
that we prove the global convergence of Algorithmic Pattern~\ref{alg:CCP} to a critical point under assumptions that are
different from the ones used in \cite{AckooijDeOliveira2019}.

\begin{theorem} \label{thrm:CCP_Convergence}
Let the function $f_0$ be bounded below on the feasible region of the problem $(\mathcal{P})$, and $\{ x_n \}$ be 
the sequence generated by Algorithmic Pattern~\ref{alg:CCP}. Then the following statements hold true:
\begin{enumerate}
\item{the feasible region $\Omega(x_n)$ of the convex subproblem on Step~2 of the algorithic pattern is nonempty for
all $n \in \mathbb{N}$, and the sequence $\{ x_n \}$ is feasible for the problem $(\mathcal{P})$;
\label{st:feasibility}}

\item{for any $n \in \mathbb{N}$ either $x_n$ is a critical point of the problem $(\mathcal{P})$
and the process terminates at step $n$ or $f_0(x_{n + 1}) < f_0(x_n)$; moreover, if the algorithmic pattern does not
terminate, then the sequence $\{ f_0(x_n) \}$ converges;
\label{st:relaxation}}

\item{if the function $h_0$ is strongly convex with constant $\mu > 0$, then
\begin{equation} \label{eq:StrongRelaxation}
  f_0(x_{n + 1}) \le f_0(x_n) - \frac{\mu}{2} |x_{n + 1} - x_n|^2
\end{equation}
for all $n \in \mathbb{N}$;
\label{st:strong_relaxation}}

\item{if $x_*$ is a limit point of the sequence $\{ x_n \}$ such that
\[
  0 \in \interior\big\{ G(x) - H(x_*) - D H(x_*)(x - x_*) + K \bigm| x \in Q \big\}
\]
(that is, Slater's condition holds for problem \eqref{prob:AuxiliaryConvexProb}), then $x_*$ is a critical point for
the problem $(\mathcal{P})$.
\label{st:convergence}}
\end{enumerate}
\end{theorem}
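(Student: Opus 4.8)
My plan is to dispose of parts~\ref{st:feasibility}--\ref{st:strong_relaxation} by direct computation and to concentrate the effort on the limit-point analysis of part~\ref{st:convergence}. First I would prove part~\ref{st:feasibility} by induction on $n$, establishing at once that $x_n$ is feasible for $(\mathcal{P})$ and that $x_n \in \Omega(x_n)$, so that $\Omega(x_n) \ne \emptyset$. The base case is Step~1, and at the point $x_n$ the linearised constraint collapses to $G(x_n) - H(x_n) = F(x_n) \preceq_K 0$, giving $x_n \in \Omega(x_n)$. For the inductive step I would combine the membership $x_{n+1} \in \Omega(x_n)$, i.e. $G(x_{n+1}) - H(x_n) - DH(x_n)(x_{n+1} - x_n) \preceq_K 0$, with the gradient inequality $H(x_{n+1}) - H(x_n) - DH(x_n)(x_{n+1} - x_n) \in K$ supplied by Lemma~\ref{lem:GenConvexityViaDerivative}; adding and cancelling exactly as in the proof of Theorem~\ref{thrm:OptCond} yields $F(x_{n+1}) \preceq_K 0$.

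For parts~\ref{st:relaxation}--\ref{st:strong_relaxation} I would write $\omega_n(x) = g_0(x) - \langle v_n, x - x_n \rangle$ for the Step~3 objective, so that $\omega_n(x_n) = g_0(x_n)$, and use that $x_n \in \Omega(x_n)$ while $x_{n+1}$ minimises $\omega_n$ over $\Omega(x_n)$ to get $\omega_n(x_{n+1}) \le \omega_n(x_n)$. Feeding in the subgradient inequality $h_0(x_{n+1}) \ge h_0(x_n) + \langle v_n, x_{n+1} - x_n \rangle$ produces the key chain $f_0(x_{n+1}) - f_0(x_n) \le \omega_n(x_{n+1}) - \omega_n(x_n) \le 0$. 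When $x_{n+1} = x_n$ the point $x_n$ minimises $\omega_n$ over $\Omega(x_n)$ with $v_n \in \partial h_0(x_n)$, which is precisely the criticality condition recorded before the theorem; otherwise $x_n$ is not returned as a minimiser, the first inequality above is strict, and $f_0(x_{n+1}) < f_0(x_n)$. Monotonicity plus boundedness below of $f_0$ on the feasible iterates then gives convergence of $\{ f_0(x_n) \}$. Part~\ref{st:strong_relaxation} is the same computation with the strong-convexity refinement $h_0(x_{n+1}) \ge h_0(x_n) + \langle v_n, x_{n+1} - x_n \rangle + \frac{\mu}{2} |x_{n+1} - x_n|^2$ replacing the plain subgradient inequality.

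The substance of the theorem is part~\ref{st:convergence}. Given a subsequence $x_{n_k} \to x_*$, I would first note that the subdifferential of the finite convex function $h_0$ is locally bounded, so $\{ v_{n_k} \}$ is bounded; passing to a further subsequence I may take $v_{n_k} \to v_*$, and the closed graph of $\partial h_0$ forces $v_* \in \partial h_0(x_*)$. The target is to show that $x_*$ solves problem~\eqref{prob:AuxiliaryConvexProb} at $x_*$ for this $v_*$, i.e. that $\omega_*(x_*) \le \omega_*(\hat x)$ for every $\hat x \in \Omega(x_*)$, where $\omega_*(x) = g_0(x) - \langle v_*, x - x_* \rangle$. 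Since $\{ f_0(x_n) \}$ converges, the squeeze $f_0(x_{n+1}) - f_0(x_n) \le \omega_n(x_{n+1}) - g_0(x_n) \le 0$ forces $\omega_{n_k}(x_{n_k + 1}) - g_0(x_{n_k}) \to 0$, and continuity of the finite convex function $g_0$ then gives $\omega_{n_k}(x_{n_k + 1}) \to g_0(x_*) = \omega_*(x_*)$.

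The crux, and the step I expect to be the main obstacle, is transferring an arbitrary competitor $\hat x \in \Omega(x_*)$ back onto the perturbed feasible sets $\Omega(x_{n_k})$. Here I would apply Corollary~\ref{crlr:PerturbedSolution} with $\Phi = G$, $\Psi = H$, $\mathcal{A} = A$, $\mathcal{K} = K$: the Slater-type hypothesis of part~\ref{st:convergence} is exactly its constraint qualification~\eqref{eq:RegularPointConeConstr}, feasibility of $x_*$ for $(\mathcal{P})$ holds because the feasible set of $(\mathcal{P})$ is closed and $x_*$ is a limit of feasible iterates, and $\hat x$ serves as the admissible base point. The corollary yields points $\xi(x_{n_k}) \in \Omega(x_{n_k})$ with $\xi(x_{n_k}) \to \hat x$. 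Optimality of $x_{n_k + 1}$ gives $\omega_{n_k}(x_{n_k + 1}) \le \omega_{n_k}(\xi(x_{n_k}))$; letting $k \to \infty$, the left-hand side tends to $\omega_*(x_*)$ by the previous paragraph and the right-hand side to $\omega_*(\hat x)$ by continuity of $g_0$ together with $v_{n_k} \to v_*$, whence $\omega_*(x_*) \le \omega_*(\hat x)$. Thus $x_*$ minimises~\eqref{prob:AuxiliaryConvexProb} for $v_* \in \partial h_0(x_*)$ and is therefore critical. The delicate points are the verification of the hypotheses of Corollary~\ref{crlr:PerturbedSolution} and the control of $\omega_{n_k}(\xi(x_{n_k}))$ in the limit; this is precisely where Slater's condition is indispensable, in contrast to the purely algebraic parts~\ref{st:feasibility}--\ref{st:strong_relaxation}.
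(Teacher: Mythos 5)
Your proposal is correct, and for parts~\ref{st:feasibility}--\ref{st:strong_relaxation} it coincides with the paper's argument: the same induction using Lemma~\ref{lem:GenConvexityViaDerivative} to pass from $x_{n+1} \in \Omega(x_n)$ to $F(x_{n+1}) \preceq_K 0$, and the same combination of the optimality of $x_{n+1}$ with the (strong) subgradient inequality for $h_0$. Where you genuinely diverge is part~\ref{st:convergence}. The paper argues by contradiction: assuming $x_*$ is not critical, it produces a feasible point $\overline{x}$ of problem~\eqref{prob:AuxiliaryConvexProb} and $\theta > 0$ with $g_0(\overline{x}) - \langle v_*, \overline{x} - x_* \rangle < g_0(x_*) - \theta$, applies Corollary~\ref{crlr:PerturbedSolution} (with the same instantiation $\Phi = G$, $\Psi = H$, $\mathcal{A} = A$, $\mathcal{K} = K$ as yours) to plant competitors $\xi(x_{n_k}) \in \Omega(x_{n_k})$, and deduces $f_0(x_{n_k+1}) \le f_0(x_{n_k}) - \theta/2$ for all large $k$, whence $f_0(x_n) \to -\infty$, contradicting boundedness below. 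You instead run a direct limit argument: convergence of $\{ f_0(x_n) \}$ plus the squeeze $f_0(x_{n+1}) - f_0(x_n) \le \omega_n(x_{n+1}) - \omega_n(x_n) \le 0$ gives $\omega_{n_k}(x_{n_k+1}) \to \omega_*(x_*)$, and passing to the limit in $\omega_{n_k}(x_{n_k+1}) \le \omega_{n_k}(\xi(x_{n_k}))$ yields $\omega_*(x_*) \le \omega_*(\hat{x})$ for every $\hat{x} \in \Omega(x_*)$. The two arguments carry the same mathematical content --- both hinge on the perturbation corollary and on boundedness below of $f_0$ --- but yours avoids the reductio, makes explicit exactly where boundedness below enters (through convergence of the values), and is more careful than the paper in verifying the corollary's hypothesis that $x_*$ itself is feasible (closedness of the feasible region together with part~\ref{st:feasibility}).

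One small caveat in part~\ref{st:relaxation}: your dichotomy is keyed to whether $x_{n+1} = x_n$, and you claim that $x_{n+1} \ne x_n$ forces the strict inequality $\omega_n(x_{n+1}) < \omega_n(x_n)$. That fails if the subproblem has several minimizers, one of which is $x_n$, but the solver returns another. The paper keys the case split to criticality instead: if $x_n$ is \emph{not} critical, then $x_n$ is not a solution of the convex subproblem, so the strict inequality, and hence $f_0(x_{n+1}) < f_0(x_n)$, is automatic. Phrase it that way and the gap disappears; nothing downstream is affected, since part~\ref{st:convergence} only needs monotonicity and convergence of $\{ f_0(x_n) \}$.
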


\begin{proof}
\ref{st:feasibility}. Let us prove this statement by induction in $n$. By our assumption $x_0$ is feasible for 
the problem $(\mathcal{P})$, which implies that $x_0 \in \Omega(x_0)$, that is, the feasible region $\Omega(x_0)$ of
the convex subproblem is nonempty.

\textit{Inductive step.} Suppose that for some $n \in \mathbb{N}$ the point $x_n$ is feasible for 
the problem $(\mathcal{P})$ and $\Omega(x_n)$ is nonempty. Let us prove that $x_{n + 1}$ is feasible for 
the problem $(\mathcal{P})$. Then $x_{n + 1} \in \Omega(x_{n + 1})$, i.e. $\Omega(x_{n + 1}) \ne \emptyset$, and 
the proof of the first statement is complete.

Indeed, by definition the point $x_{n + 1}$ is a globally optimal solution of the convex subproblem on Step~2 of the
algorithmic pattern, which implies that
\[
  G(x_{n + 1}) - H(x_n) - D H(x_n)(x_{n + 1} - x_n) \preceq_K 0, \quad x_{n + 1} \in Q.
\]
By Lemma~\ref{lem:GenConvexityViaDerivative} (see page~\pageref{lem:GenConvexityViaDerivative}) one has
\[
  - H(x_{n + 1}) \preceq_K - H(x_n) - D H(x_n)(x_{n + 1} - x_n).
\]
Therefore $F(x_{n + 1}) = G(x_{n + 1}) - H(x_{n + 1}) \preceq_K 0$, i.e. the point $x_{n + 1}$ is feasible for the
problem $(\mathcal{P})$.

\ref{st:relaxation}. If a point $x_n$ is not critical, then, as was noted above, $x_n$ is not a solution of the convex
subproblem on Step~2 of Algorithmic Pattern~\ref{alg:CCP}, which implies that
\[
  g_0(x_{n + 1}) - \langle v_n, x_{n + 1} - x_n \rangle < g_0(x_n).
\]
Subtracting $h_0(x_n)$ from both sides of this inequality and applying the definition of subgradient, one obtains that
$f_0(x_{n + 1}) < f_0(x_n)$. Hence bearing in mind the facts that the sequence $\{ x_n \}$ is feasible and $f_0$ is
bounded below on the feasible region, one gets that the sequence $\{ f(x_n) \}$ converges.

\ref{st:strong_relaxation}. Fix any $n \in \mathbb{N}$. Due to the strong convexity of $h_0$ one has
\[
  h_0(x_{n + 1}) - h_0(x_n) \ge \langle v_n, x_{n + 1} - x_n \rangle + \frac{\mu}{2} |x_{n + 1} - x_n|^2.
\]
Furthermore, by the definition of $x_{n + 1}$ one has
\[
  g_0(x_n) \ge g_0(x_{n + 1}) - \langle v_n, x_{n + 1} - x_n \rangle.
\]
Summing up these two inequalities one obtains that \eqref{eq:StrongRelaxation} holds true.

\ref{st:convergence}. By our assumption there exists a subsequence $\{ x_{n_k} \}$ converging to $x_*$. The
corresponding sequence $\{ v_{n_k} \}$ of subgradients of the function $h_0$ is bounded, since the subdifferential
mapping of a finite convex function is locally bounded (see, e.g. \cite[Cor.~24.5.1]{Rockafellar}). Therefore,
replacing, if necessary, the sequence $\{ x_{n_k} \}$ with its subsequence one can suppose that the sequence of
subgradients $\{ v_{n_k} \}$ converges to some vector $v_*$ belonging to $\partial h_0(x_*)$ due to the fact that 
the graph of the subdifferential is closed (see, e.g. \cite[Thm.~24.4]{Rockafellar}).

By contradiction, suppose that $x_*$ is not a critical point of the problem $(\mathcal{P})$. Then, in particular, 
\[
  v_* \notin \partial g_0(x_*) + N_{\Omega(x_*)}(x_*),
\]
which by Theorem~\ref{thrm:OptCond} implies that $x_*$ is not a globally optimal solution of the convex problem
\eqref{prob:AuxiliaryConvexProb}. Consequently, there exists a feasible point $\overline{x}$ of this problem and 
$\theta > 0$ such that $g_0(\overline{x}) - \langle v_*, \overline{x} - x_* \rangle < g(x_*) - \theta$. 

Applying Lemma~\ref{lem:PerturbedSolution} with $\Phi = G$, $\Psi = H$, $W = Q$, and $E = K$,
one obtains that for any $z \in Q$ lying in a neighbourhood of $x_*$ one can find a point $\xi(z) \in Q$ such
that $G(\xi(z)) - H(z) - D H(z)(\xi(z) - z) \preceq_K 0$ and $\xi(z) \to \overline{x}$ as 
$z \to x_*$. Hence taking into account the facts that the subsequence $\{ x_{n_k} \}$ converges to $x_*$, while
$\{ v_{n_k} \}$ converges to $v_*$, one obtains that there exists $k_0 \in \mathbb{N}$ such that for all $k \ge k_0$ one
has
\begin{gather*}
  g_0(\xi(x_{n_k})) - \langle v_{n_k}, \xi(x_{n_k}) - x_{n_k} \rangle 
  \le g_0(x_{n_k}) - \frac{\theta}{2}, 
  \\
  G(\xi(x_{n_k})) - H(x_{n_k}) - D H(x_{n_k})(\xi(x_{n_k}) - x_{n_k}) \preceq_K 0.
\end{gather*}
Note that $\xi(x_{n_k})$ is a feasible point of the convex subproblem on Step~2 of Algorithmic Pattern~\ref{alg:CCP} for
any $k \ge k_0$. Consequently, by the definition of $x_{n_k + 1}$ one has
\begin{align*}
  g_0(x_{n_k + 1}) - \langle v_{n_k}, x_{n_k + 1} - x_{n_k} \rangle
  &\le g_0(\xi(x_{n_k})) - \langle v_{n_k}, \xi(x_{n_k}) - x_{n_k} \rangle 
  \\
  &\le g_0(x_{n_k}) - \frac{\theta}{2}
\end{align*}
for all $k \ge k_0$. Subtracting $h_0(x_{n_k})$ from both sides of this inequality and applying the definition of
subgradient, one gets that $f_0(x_{n_k + 1}) \le f_0(x_{n_k}) - \theta / 2$ for any $k \ge k_0$. Hence with the use
of the second part of this theorem one can conclude that $f_0(x_n) \to - \infty$, which contradicts the facts that
$f_0$ is bounded below on the feasible set by our assumption and the sequence $\{ x_n \}$ is feasible by the first part
of the theorem.  
\end{proof}

\begin{remark} \label{rmrk:CCP_StoppingCriterion}
{(i)~Note that the assumption on the strong convexity of the function $h_0$ is not restrictive, since if this assumption
is not satisfied, for any $\mu > 0$ one can replace the DC decomposition $f_0 = g_0 - h_0$ of the objective function
$f_0$ with the following one: 
\[
  f_0(x) = \left( g_0(x) + \frac{\mu}{2} |x|^2 \right) - \left( h_0(x) + \frac{\mu}{2} |x|^2 \right),
  \quad x \in \mathbb{R}^d.
\]
}
\vspace{-3mm}

\noindent{(ii)~Since by the previous theorem the sequence $\{ f(x_n) \}$ converges, one can use the inequalities
$|f_0(x_{n + 1}) - f_0(x_n)| \le \varepsilon$ and/or $\| x_{n + 1} - x_n \| \le \varepsilon$ as a stopping criterion
for Algorithmic Pattern~\ref{alg:CCP}. Note also that by definitions
\begin{align*}
  0 &\le g_0(x_n) - \langle v_n, x_n \rangle - \Big( g_0(x_{n + 1}) - \langle v_n, x_{n + 1} \rangle \Big)
  \\
  &= g_0(x_n) - g_0(x_{n + 1}) + \langle v_n, x_{n + 1} - x_n \rangle
  \\
  &\le g_0(x_n) - g_0(x_{n + 1}) + h_0(x_{n + 1}) - h_0(x_n) \le f_0(x_n) - f_0(x_{n + 1})
\end{align*}
and the first inequality turns into an equality if and only if $x_n$ is a critical point of the problem $(\mathcal{P})$.
Therefore, one can replace the stopping criterion $|f_0(x_{n + 1}) - f_0(x_n)| \le \varepsilon$ with
$|g_0(x_{n + 1}) - \langle v_n, x_{n + 1} \rangle - (g_0(x_n) - \langle v_n, x_n \rangle)| \le \varepsilon$ to avoid
the computation of $h_0(x_n)$ and $h_0(x_{n + 1})$. For a discussion of more elaborate stopping criteria for DC
optimization methods involving approximate optimality conditions see \cite{Strekalovsky_Collect}.
}
\end{remark}

\section{DCA2/The Penalty Convex-Concave Procedure}
\label{sect:PenalizedCCP}

In order to apply Algorithmic Pattern~\ref{alg:CCP}, one needs to find a feasible point of the problem under
consideration. In the case when such point is unknown in advance and is hard to compute, one can use a combination of
the DCA and exact penalty techniques that allows one to start iterations at infeasible points. Such modifications of 
Algorithmic Pattern~\ref{alg:CCP} were discussed in \cite{LippBoyd} (and in \cite{LeThiPhamDinh2014,LeThiPhamDinh2014b}
in the case of inequality constrained problems). Here we present and analyze one such method, which is a slight
modification of the penalty convex-concave procedure \cite[Algorithm~4.2]{LippBoyd}. This method can be viewed as
an extension of DCA2 algorithm from \cite{LeThiPhamDinh2014,LeThiPhamDinh2014b} to the case of cone constrained DC
optimization problems.

A general scheme of DCA2/Penalty CCP for the problem $(\mathcal{P})$ is given in Algorithmic
Pattern~\ref{alg:CCP_penalty}. The only difference between our method and \cite[Algorithm~4.2]{LippBoyd} is the penalty
updates. Namely, in contrast to \cite{LippBoyd}, we increase the penalty parameter, only if the infeasibility measure at
the current iteration exceeds a prespecified threshold. Let us also note that the inequality $t_0 \succ_{K^*} 0$ means
that $t_0 \in K^*$ and $\langle t_0, y \rangle > 0$ for any $y \in K$, $y \ne 0$. Finally, a stopping criterion for
Algorithmic Pattern~\ref{alg:CCP_penalty} is discussed in Remark~\ref{rmrk:PenaltyCCP_StoppinCriterion} below.

\begin{algorithm}[ht!]	\label{alg:CCP_penalty}
\caption{DCA2/Penalty CCP/Exact Penalty DCA.}

\noindent\textbf{Initialization.} {Choose an initial point $x_0 \in Q$, penalty parameter $t_0 \succ_{K^*} 0$, the
maximal norm of the penalty parameter $\tau_{\max} > 0$, $\mu > 1$, infeasibility tolerance $\varkappa \ge 0$, and set
$n := 0$.}

\noindent\textbf{Step~1.} {Compute $v_n \in \partial h_0(x_n)$ and $D H(x_n)$.}

\noindent\textbf{Step~2.} {Set the value of $(x_{n + 1}, s_{n + 1})$ to an optimal solution of the convex problem
\begin{align*}
  &\minimize_{(x, s)} \enspace g_0(x) - \langle v_n, x \rangle + \langle t_n, s \rangle \\
  &\text{subject to} \enspace G(x) - H(x_n) - D H(x_n)(x - x_n) \preceq_K s, \quad s \succeq_K 0, \quad x \in Q.
\end{align*}
If a stopping criterion is satisfied, \textbf{Stop}.
}

\noindent\textbf{Step~3.} {Define
\[
  t_{n + 1} = \begin{cases}
    \mu t_n, & \text{if } \| s_{n + 1} \| \ge \varkappa \text{ and } \mu \| t_n \| \le \tau_{\max},
    \\
    t_n, & \text{otherwise.}
  \end{cases}
\]
Put $n := n + 1$ and go to \textbf{Step 1}.
}
\end{algorithm}

\begin{remark}
Let us point out how the penalized subproblem on Step~2 of Algorithmic Pattern~\ref{alg:CCP_penalty} is connected with
standard exact penalty methods for cone constrained optimization \cite{BonnansShapiro,Auslender}. Following the standard
exact penalty methodology, one can base an exact penalty local search method for the problem $(\mathcal{P})$ on the
function $\Phi_c(\cdot) = f_0(\cdot) + c \dist(F(\cdot), -K)$. Taking into account the fact that the map 
$y \mapsto \dist(y, -K)$ is monotone with respect to the partial order $\preceq_K$ and utilizing the DC structure of the
problem $(\mathcal{P})$ one can define a global convex majorant of the function $\Phi_c$ of the form
\begin{align*}
  \Psi_c(x; x_n; v_n) &= g_0(x) - h_0(x_n) - \langle v_n, x - x_n \rangle
  \\
  &+ c \dist(G(x) - H(x_n) - DH(x_n)(x - x_n), - K),
\end{align*}
and propose a DCA-type method for the problem $(\mathcal{P})$ based on sequential minimization of this function. To be
able to utilize efficient convex cone constrained optimization methods
\cite{NesterovNemirovski,BenTalNemirovski,BoydVandenberghe} and corresponding software, one can rewrite the problem of
minimizing the function $\Psi_c(\cdot; x_n; v_n)$ over the set $Q$ as the equivalent convex cone constrained problem
\begin{equation} \label{prob:DifferentPenalty}
\begin{split}
  &\minimize_{(x, s, \xi)} \enspace g_0(x) - \langle v_0, x \rangle + c \xi 
  \\
  &\text{s.t.} \enspace G(x) - H(x_n) - D H(x_n)(x - x_n) \preceq_K s, 
  \quad (\xi, s) \in \mathcal{K}_L, \quad x \in Q,
\end{split}
\end{equation}
where $\mathcal{K}_L = \{ (\xi, s) \in \mathbb{R} \times Y \mid \xi \ge \| s \| \}$ is the generalized Lorentz (second
order) cone. Alternatively, following the approach of \cite{LippBoyd}, one can consider the penalized
subproblem from Step~2 of Algorithmic Pattern~\ref{alg:CCP_penalty}. This subproblem can be derived in exactly the same
way as problem \eqref{prob:DifferentPenalty}, if one replaces the function $c \dist(F(\cdot), - K)$ with the following
one:
\[
  \varphi_{t_n}(x) = \inf\Big\{ \langle t_n, s \rangle \Bigm| s \succeq_K F(x), \: s \succeq_K 0 \Big\}
\]
(as will be shown below, under some natural assumptions on the cone $K$ and the space $Y$, one has 
$\varphi_{t_n}(\cdot) \ge \tau \dist(F(\cdot), -K)$ for some $\tau > 0$). Let us note that all results below can be
easily extended to the case when the auxiliary subproblem on Step~2 of Algorithmic Pattern~\ref{alg:CCP_penalty} is
replaced by problem \eqref{prob:DifferentPenalty}. For the sake of shortness, we do note present this extension here.
\end{remark}

Let us analyze convergence of Algorithmic Pattern~\ref{alg:CCP_penalty}. Firstly, we show that under some standard
assumptions the penalized convex subproblem on Step~2 of this algorithm is \text{exact}, in the sense that if the norm
of the penalty parameter $t_n$ is sufficiently large, then a solution of the subproblem on Step~2 of
Algorithmic Pattern~\ref{alg:CCP_penalty} coincides with the solution of the corresponding non-penalized problem
\begin{equation} \label{prob:NonPenalizedSequence}
\begin{split}
  &\minimize \enspace g_0(x) - \langle v, x \rangle
  \\
  &\text{subject to} \enspace G(x) - H(x_n) - D H(x_n)(x - x_n) \preceq_K 0, \quad x \in Q,
\end{split}
\end{equation}
provided the feasible region of this problem is nonempty. This result implies, in particular, that if for some 
$n \in \mathbb{N}$ the norm of the penalty parameter $t_n$ exceeds a certain threshold and the feasible region of
problem \eqref{prob:NonPenalizedSequence} is nonempty, then the next point $x_{n + 1}$ is feasible for 
the problem $(\mathcal{P})$ and the rest of the iterations of Algorithmic Pattern~\ref{alg:CCP_penalty} coincide with 
the iterations of Algorithmic Pattern~\ref{alg:CCP}. Thus, in this case one can ensure the convergence of a sequence
generated by
Algorithmic Pattern~\ref{alg:CCP_penalty} to a critical point for the problem $(\mathcal{P})$.

Before we proceed to the proof of the exactness of the subproblem from Step~2 of Algorithmic
Pattern~\ref{alg:CCP_penalty}, let
us first provide simple sufficient conditions for the existence of globally optimal solutions of this problem and the
corresponding non-penalized problem \eqref{prob:NonPenalizedSequence}. To this end, recall that a function 
$\varphi \colon \mathbb{R}^d \to \mathbb{R}$ is called \textit{coercive} on the set $Q$, if $\varphi(x_n) \to + \infty$
as $n \to \infty$ for any sequence $\{ x_n \} \subset Q$ such that $\| x_n \| \to + \infty$ as $n \to \infty$.

\begin{lemma} \label{lem:PenalizedProblem_WellPosedness}
Let the space $Y$ be finite dimensional, the cone $K$ be generating (i.e. $K - K = Y$), and the penalty function 
$\Phi_c(\cdot) = f_0(\cdot) + c \dist(F(\cdot), - K)$ be coercive on $Q$ for some $c > 0$. Then there exists 
$\mu_* \ge 0$ such that for any $\mu \ge \mu_*$ and for all $z \in Q$ and $v \in \partial h(z)$ there exists
a globally optimal solution of the penalized problem
\begin{equation} \label{prob:Penalized}
\begin{split}
  &\minimize_{(x, s)} \enspace g_0(x) - \langle v, x \rangle + \mu \langle t_0, s \rangle \\
  &\mathrm{subject~to} \enspace 
  G(x) - H(z) - D H(z)(x - z) \preceq_K s, \quad s \succeq_K 0,  \quad x \in Q.
\end{split}
\end{equation}
Moreover, if the feasible region of the corresponding non-penalized problem
\begin{equation} \label{prob:NonPenalized}
\begin{split}
  &\minimize \enspace g_0(x) - \langle v, x \rangle \\
  &\mathrm{subject~to} \enspace G(x) - H(z) - D H(z)(x - z) \preceq_K 0, \quad x \in Q
\end{split}
\end{equation}
is nonempty, then this problem has a globally optimal solution as well.
\end{lemma}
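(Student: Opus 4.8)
The plan is to read the penalized problem \eqref{prob:Penalized} as the minimization of a closed convex function over a closed convex set in the \emph{finite-dimensional} space $\mathbb{R}^d \times Y$, and then to invoke the standard Weierstrass-type principle that a lower semicontinuous function which is coercive on a nonempty closed set attains its infimum. Write $\Psi(x,s) = g_0(x) - \langle v, x - \overline{x}\rangle + \mu \langle t_0, s\rangle$ for the objective, $L(x) = G(x) - H(\overline{x}) - D H(\overline{x})(x - \overline{x})$ for the $K$-convex affine majorant of $F$ occurring in the constraint, and $C = \{ (x,s) : L(x) \preceq_K s,\ s \succeq_K 0,\ x \in A \}$ for the feasible set. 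Since $L$ is continuous and $K$, $A$ are closed, $C$ is closed; since $L$ is $K$-convex and $K$, $A$ are convex, $C$ is convex; and since $K$ is generating, every $x \in A$ admits a feasible $s$ (write $L(x) = k_1 - k_2$ with $k_i \in K$ and take $s = k_1$), so $C \ne \emptyset$. As $\Psi$ is closed convex, the whole statement reduces to establishing coercivity of $\Psi$ on $C$ for all large $\mu$.

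The decisive step, which I expect to be the main obstacle, is a lower estimate tying the penalty term $\langle t_0, s\rangle$ to the infeasibility measure $\dist(F(x), -K)$ appearing in the coercive function $\Phi_c$. I would first extract the constant $\beta = \min\{ \langle t_0, s\rangle : s \in K,\ \|s\| = 1 \}$, which is strictly positive because the unit sphere of $K$ is compact (finite dimension) and $t_0 \succ_{K^*} 0$ forces $\langle t_0, \cdot\rangle > 0$ on $K \setminus \{0\}$; this yields $\langle t_0, s\rangle \ge \beta \|s\|$ for every $s \in K$. Next, for any feasible $(x,s)$ the element $-(s - L(x))$ lies in $-K$ and $\|L(x) - (-(s - L(x)))\| = \|s\|$, so $\dist(L(x), -K) \le \|s\|$. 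Finally, since $H$ is $K$-convex, Lemma~\ref{lem:GenConvexityViaDerivative} gives $F(x) \preceq_K L(x)$, and a short argument shows that $y \preceq_K y'$ implies $\dist(y, -K) \le \dist(y', -K)$ (translate the nearest point of $-K$ to $y'$ by the $K$-difference $y' - y$). Chaining these three facts gives $\langle t_0, s\rangle \ge \beta\, \dist(F(x), -K)$ for every feasible $(x,s)$, which is the estimate that connects $\Psi$ to $\Phi_c$.

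With this in hand the remainder is routine. From the subgradient inequality $h_0(x) \ge h_0(\overline{x}) + \langle v, x - \overline{x}\rangle$ I obtain $g_0(x) - \langle v, x - \overline{x}\rangle \ge f_0(x) + h_0(\overline{x})$, so that for $\mu \beta \ge c$,
\[
  \Psi(x,s) \ge f_0(x) + \mu\beta\, \dist(F(x), -K) + h_0(\overline{x}) \ge \Phi_c(x) + h_0(\overline{x}).
\]
Setting $\mu_* = c/\beta \ge 0$, coercivity of $\Phi_c$ on $A$ makes $\Psi$ coercive in the $x$-variable for $\mu \ge \mu_*$, while the cruder bound $\Psi(x,s) \ge \big(g_0(x) - \langle v, x - \overline{x}\rangle\big) + \mu\beta\|s\|$ gives coercivity in $s$ on bounded $x$-sets (there $g_0$, being finite convex hence continuous, is bounded below). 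A two-case subsequence argument (along any sequence in $C$ leaving every bounded set, either $\|x_n\| \to \infty$, handled by the first bound, or $\|x_n\|$ stays bounded and $\|s_n\| \to \infty$, handled by the second) then forces $\Psi \to +\infty$, so $\Psi$ is coercive on $C$ and a minimizer of \eqref{prob:Penalized} exists. For the non-penalized problem \eqref{prob:NonPenalized}, on its (assumed nonempty, closed, convex) feasible region one has $L(x) \in -K$, whence $\dist(F(x), -K) = 0$ and $f_0(x) = \Phi_c(x)$; the objective is therefore bounded below by $\Phi_c(x) + h_0(\overline{x})$ and coercive on that region, so the same lower-semicontinuity-plus-coercivity principle yields a minimizer.
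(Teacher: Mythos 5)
Your proposal is correct and follows essentially the same route as the paper: both arguments reduce existence to coercivity plus closedness (Weierstrass), use the generating property of $K$ for nonemptiness of the penalized feasible set, extract the same constant $\beta=\min\{\langle t_0,s\rangle \mid s\in K,\ \|s\|=1\}>0$, derive the same key estimate $\langle t_0,s\rangle \ge \beta\,\dist(F(x),-K)$ on the feasible set (the paper gets it directly from $s\in F(x)+K$, you via the monotonicity of $\dist(\cdot,-K)$ under $\preceq_K$), and invoke the subgradient inequality to compare with $\Phi_c$, yielding the same threshold $\mu_*=c/\beta$. The only difference is presentational: you prove coercivity directly with a two-case subsequence argument, whereas the paper phrases the identical dichotomy as a reductio ad absurdum.
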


\begin{proof}
Indeed, fix any $z \in Q$. Suppose at first that the feasible region of problem \eqref{prob:NonPenalized} is nonempty.
By Lemma~\ref{lem:GenConvexityViaDerivative} (see page~\pageref{lem:GenConvexityViaDerivative}) one has
\[
  - H(x) \preceq_K - H(z) - D H(z)(x - z) \quad \forall x \in \mathbb{R}^d.
\]
Adding $G(x)$ to both sides of this inequality, one obtains that
\begin{equation} \label{eq:VectorConvexity_DerivTest}
  F(x) \preceq_K G(x) - H(z) - D H(z)(x - z) \quad
  \forall x \in \mathbb{R}^d,
\end{equation}
which implies that the feasible region of problem \eqref{prob:NonPenalized} is contained in the feasible region of
the problem $(\mathcal{P})$. 

From the the coercivity of the function $\Phi_c(\cdot) = f_0(\cdot) + c \dist(F(\cdot), - K)$ on the set $Q$ it
follows that the function $f_0$ is coercive on the feasible region of the problem $(\mathcal{P})$ and, therefore, on the
feasible region of problem \eqref{prob:NonPenalized} as well (recall that $F(x) \preceq_K 0$ if and only if 
$F(x) \in - K$). Hence taking into account the fact that by the definition of subgradient
\[
  g_0(x) - \langle v, x \rangle \ge f_0(x) + h_0(z) - \langle v, z \rangle 
  \quad \forall x \in \mathbb{R}^d,
\]
one gets that the objective function of problem \eqref{prob:NonPenalized} is coercive on the feasible region of this
problem, which is closed by virtue of our assumptions on $G$ and $H$. Consequently, there exists a globally optimal
solution of problem \eqref{prob:NonPenalized}.

Let us now consider problem \eqref{prob:Penalized}. Our assumptions on $G$ and $H$ guarantee that the feasible region of
this problem is closed. Note that a pair $(x, s) \in Q \times K$ is feasible for this problem if and only if
\[
  G(x) - H(z) - D H(z)(x - z) \in s - K.
\]
Hence bearing in mind the fact that the cone $K$ is generating, one gets that the feasible region of
problem \eqref{prob:Penalized} is nonempty.

Let us check that the objective function 
\[
  \omega(x, s) = g_0(x) - \langle v, x \rangle + \mu \langle t_0, s \rangle
\]
of problem \eqref{prob:Penalized} is coercive on the feasible region of this problem, provided $\mu \ge 0$ is
sufficiently large. Then one can conclude that a globally optimal solution of problem \eqref{prob:Penalized} exists as
well.

Indeed, by contradiction, suppose that $\omega$ is not coercive on the feasible region of problem 
\eqref{prob:Penalized}. Then there exist $M > 0$ and a sequence $\{ (x_n, s_n) \}$ of feasible points of
problem \eqref{prob:Penalized} such that $\| x_n \| + \| s_n \| \to + \infty$ as $n \to \infty$, but
$\omega(x_n, s_n) \le M$ for all $n \in \mathbb{N}$. Observe that $F(x_n) \preceq_K s_n$ for all $n \in \mathbb{N}$ due
to \eqref{eq:VectorConvexity_DerivTest}, which implies that
\[
  M \ge \omega(x_n, s_n) \ge \inf\Big\{ \omega(x_n, s) \Bigm| s \succeq_K F(x_n), \: s \succeq_K 0 \Big\} 
  \quad \forall n \in \mathbb{N}.
\]
Let us estimate the infimum on the right-hand side of this inequality. Bearing in mind the facts that $t_0$ is a 
continuous linear functional, $t_0 \succ_{K^*} 0$, and $K$ is a closed subset of a finite dimensional normed space, one
obtains that
\[
  \tau := \min\Big\{ \langle t_0, s \rangle \Bigm| s \in K, \: \| s \| = 1 \Big\} > 0, \quad
  \langle t_0, s \rangle \ge \tau \| s \| \quad \forall s \in K.
\]
Therefore for any $n \in \mathbb{N}$ one has
\begin{align*}
  M \ge \omega(x_n, s_n) &\ge g_0(x_n) - \langle v, x_n \rangle
  + \mu \tau \inf\Big\{ \| s \| \Bigm| s \in F(x_n) + K, \: s \in K \Big\} 
  \\
  &\ge f_0(x_n) + h_0(z) - \langle v, z \rangle + \mu \tau \inf_{y \in K} \| F(x_n) + y \|
  \\
  &= f_0(x_n) + h_0(z) - \langle v, z \rangle + \mu \tau \dist(F(x_n), - K).
\end{align*}
Hence taking into account the fact that by our assumption the penalty function 
$\Phi_c(\cdot) = f_0(\cdot) + c \dist(F(\cdot), - K)$ is coercive on $Q$, one obtains that the sequence $\{ x_n \}$ is
bounded, provided $\mu \ge c / \tau$. Consequently, $\| s_n \| \to + \infty$ as $n \to \infty$, which contradicts the
fact that
\[
  M \ge \omega(x_n, s_n) \ge \min_{\| x \| \le r} \big( g_0(x) - \langle v, x \rangle \big)
  + \mu \tau \| s_n \|
\]
for all $n \in \mathbb{N}$, where $r = \sup_{n \in \mathbb{N}} \| x_n \|$. Thus, the function $\omega$ is coercive
on the feasible region of problem \eqref{prob:Penalized} for any $\mu \ge c / \tau$ and for any such $\mu$ there exists
a globally optimal solution of this problem.  
\end{proof}

\begin{remark}
Note that the assumptions on the space $Y$ and the cone $K$ are not used in the proof of the existence of globally
optimal solutions of the non-penalized problem~\eqref{prob:NonPenalized}.
\end{remark}

\subsection{Exactness of the Convex Subproblem}

Now we can turn to the proof of the exactness of the penalized problem \eqref{prob:Penalized}. Introduce the set 
\[
  \mathscr{D} = \Big\{ \overline{x} \in Q \Bigm| 
  \exists x \in Q \colon G(x) - H(\overline{x}) - DH(\overline{x})(x - \overline{x}) \preceq_K 0 \Big\},
\]
i.e. $\mathscr{D}$ is the set of all those $\overline{x} \in Q$ for which the feasible region of the non-penalized
problem \eqref{prob:NonPenalized} is nonempty. Observe that the feasible region of the problem $(\mathcal{P})$ is
contained in $\mathscr{D}$, but in the general case $\mathscr{D} \ne \mathbb{R}^d$. Denote by
\[
  \mathscr{D}_s = \Big\{ \overline{x} \in Q \Bigm| 
  0 \in \interior\big\{ G(x) - H(\overline{x}) - D H(\overline{x})(x - \overline{x}) + K \bigm| x \in Q \big\} \Big\}.
\]
the set of all those $\overline{x} \in Q$ for which the constraint qualification from
Corollary~\ref{crlr:OptimalityCond_Lagrange} holds true. It should be noted that in the case when the cone $K$ has
nonempty interior, this constraint qualification is satisfied if and only if there exists $x \in Q$ such that
\[
  G(x) - H(\overline{x}) - D H(\overline{x})(x - \overline{x}) \in - \interior K,
\]
that is, if and only if Slater's condition for the non-penalized problem \eqref{prob:NonPenalized} holds true (see, e.g.
\cite[Prop.~2.106]{BonnansShapiro}). Note that by definition $\mathscr{D}_s \subseteq \mathscr{D}$. Thus,
$\mathscr{D}_s$
is the subset of $\mathscr{D}$ consisting of all those $\overline{x}$ for which Slater's condition holds true for 
the non-penalized problem.

Under some natural assumptions one can verify that the set $\mathscr{D}$ is closed (in particular, it is sufficient to
suppose that the feasible region of the problem $(\mathcal{P})$ is bounded, $G$ is continuous, and $H$ is continuously
differentiable), while the set $\mathscr{D}_s$ is open in $Q$. Therefore, there are some degenerate points 
$\overline{x} \in \mathscr{D} \setminus \mathscr{D}_s$ (e.g. the ones that lie on the boundary of $\mathscr{D}$ in $Q$)
for which one must impose some additional assumptions. Our aim is to first provide somewhat cumbersome sufficient
conditions for the exactness of the penalized problem \eqref{prob:Penalized} for the entire set $\mathscr{D}$ or its
arbitrary subset, and then show that these conditions are satisfied for any compact subset of $\mathscr{D}_s$. The
sufficient conditions presented here are based on a uniform local error bound for the non-penalized problem
\eqref{prob:NonPenalized}.

To simplify the formulations and proofs of the statements below, for any $z \in \mathbb{R}^d$ introduce the convex
mapping $F_z(x) = G(x) - H(z) - D H(z)(x - z)$, $x \in \mathbb{R}^d$, and the set-valued mapping
\[
  M_z(x) = \begin{cases}
    G(x) - H(z) - D H(z)(x - z) + K, & \text{if } x \in Q, \\
    \emptyset, & \text{if } x \notin Q.
  \end{cases}
\]
The multifunction $M_z$ is convex and closed, provided the map $G$ is continuous on $Q$. For any metric space 
$(X, \rho)$ denote $B(x, r) = \{ x' \in X \mid \rho(x', x) \le r \}$ for all $x \in X$.

\begin{proposition} \label{prp:ExactPenaltySubproblem}
Let $K$ be finite dimensional and there exist $c \ge 0$ such that the penalty function 
$\Phi_c(\cdot) = f_0(\cdot) + c \dist(F(\cdot), - K)$ is coercive on $Q$. Let also $\mathscr{D}_0 \subseteq \mathscr{D}$
be a nonempty set for which one can find $a > 0$, $L_g > 0$, and $L_h > 0$ such that for any $z \in \mathscr{D}_0$ and
$v \in \partial h_0(z)$ one has $\| v \| \le L_h$ and there exist $r > 0$ and a globally optimal solution $x_*$ of the
non-penalized problem \eqref{prob:NonPenalized} such that $g_0$ is Lipschitz continuous near $x_*$ with Lipschitz
constant $L_g$ and
\begin{equation} \label{eq:UniformErrorBound}
  \dist(F_{z}(x), -K) \ge a \dist(x, M_{z}^{-1}(0)) \quad \forall x \in B(x_*, r) \cap Q.
\end{equation}
Then there exists $\mu_* \ge 0$ such that for all $\mu \ge \mu_*$ and for any $z \in \mathscr{D}_0$ and 
$v \in \partial h_0(z)$ there exists a globally optimal solution of the penalized problem \eqref{prob:Penalized}
and a pair $(x_*, s_*)$ is a solution of this problem if and only if $s_* = 0$ and $x_*$ is a solution of 
the corresponding non-penalized problem \eqref{prob:NonPenalized}.
\end{proposition}

\begin{proof}
Fix any $z \in \mathscr{D}_0$ and $v \in \partial h_0(z)$, and denote by 
\[
  \omega_{\mu}(x, s) = g_0(x) - \langle v, x - z \rangle + \mu \langle t_0, s \rangle.
\]
the objective function of problem \eqref{prob:Penalized}, shifted by the constant $\langle v, z \rangle$ for the sake
of convenience. 

Arguing in the same way as in the proof of Lemma~\ref{lem:PenalizedProblem_WellPosedness}, one can check that there
exists $\tau > 0$ such that $\langle t_0, s \rangle \ge \tau \| s \|$ for all $s \in K$. Therefore, for any feasible
point $(x, s)$ of problem \eqref{prob:Penalized} one has
\begin{align*}
  \omega_{\mu}(x, s) &\ge g_0(x) - \langle v, x - z \rangle 
  + \mu \tau \inf\big\{ \| s \| \bigm| s \succeq_K F_z(x), \: s \succeq_K 0 \big\}
  \\
  &\ge g_0(x) - \langle v, x - z \rangle + \mu \tau \inf\big\{ \| s \| \bigm| s \in F_z(x) + K \big\}
  \\
  &= g_0(x) - \langle v, x - z \rangle + \mu \tau \dist(F_z(x), - K).
\end{align*}
Let $x_*$ be a globally optimal solution of the non-penalized problem \eqref{prob:NonPenalized} (optimal solutions of
this problem exist by Lemma~\ref{lem:PenalizedProblem_WellPosedness}). Observe that by definition the set $M_z^{-1}(0)$
coincides with the feasible region of problem \eqref{prob:NonPenalized}. Therefore, by 
\cite[Prop.~2.7]{DolgopolikExactPenalty} there exists $\delta > 0$ such that
\[
  g_0(x) - \langle v, x - z \rangle \ge
  g_0(x_*) - \langle v, x_* - z \rangle - (L_g + L_h) \dist(x, M_z^{-1}(0))
\]
for all $x \in B(x_*, \delta) \cap Q$. Consequently, applying inequality \eqref{eq:UniformErrorBound}, one obtains that
\[
  \omega_{\mu}(x, s) \ge g_0(x_*) - \langle v, x_* - z \rangle
  + \left( \mu \tau a - L_g - L_h \right) \dist(x, M_z^{-1}(0))
\]
for any feasible point $(x, s)$ of problem \eqref{prob:Penalized} such that $x \in B(x_*, \delta)$. Hence for any
such $(x, s)$ one has
\[
  \omega_{\mu}(x, s) \ge g_0(x_*) - \langle v, x_* - z \rangle = \omega(x_*, 0)
  \quad \forall \mu \ge \mu_* := \frac{L_g + L_h}{\tau a},
\]  
that is, $(x_*, 0)$ is a locally optimal solution of problem \eqref{prob:Penalized} for any 
$\mu \ge \mu_*$. Taking into account the fact that this problem is convex, one gets that for any such
$\mu$ the pair $(x_*, 0)$ is a globally optimal solution of problem \eqref{prob:Penalized}. Furthermore, since
for any other globally optimal solution $\widehat{x}$ of the non-penalized problem \eqref{prob:NonPenalized} one
has $\omega_{\mu}(x_*, 0) = \omega_{\mu}(\widehat{x}, 0)$, one obtains that for \textit{any} globally optimal solution 
$\widehat{x}$ of the non-penalized problem \eqref{prob:NonPenalized} and for all $\mu \ge \mu_*$ the pair 
$(\widehat{x}, 0)$ is a globally optimal solution of the penalized problem \eqref{prob:Penalized}.

Observe that if a pair $(\widehat{x}, 0)$ is a globally optimal solution of the penalized problem
\eqref{prob:Penalized}, then $\widehat{x}$ is necessarily a globally optimal solution of the non-penalized problem
\eqref{prob:NonPenalized}. In addition, for any $x \in Q$ and $s \in K \setminus \{ 0 \}$ one has
\begin{align*}
  \omega_{\mu}(x, s) &= g_0(x) - \langle v, x - z \rangle + \mu \langle t_0, s \rangle
  > g_0(x) - \langle v, x - z \rangle + \mu_* \langle t_0, s \rangle
  \\
  &= \omega_{\mu_*}(x, s) \ge \omega_{\mu_*}(x_*, 0) = \omega_{\mu}(x_*, 0)
\end{align*}
for all $\mu > \mu_*$, that is, for any $\mu > \mu_*$ globally optimal solution of problem
\eqref{prob:Penalized} necessarily have the form $(\widehat{x}, 0)$. Thus, for any $\mu > \mu_*$ a pair 
$(\widehat{x}, \widehat{s})$ is a globally optimal solution of the penalized problem \eqref{prob:Penalized} if and
only if $\widehat{s} = 0$ and $\widehat{x}$ is a solution of the corresponding non-penalized problem. 
Since $z \in \mathscr{D}_0$ and $v \in \partial h_0(z)$ were chosen arbitrarily and $\mu_*$ does not depend on $z$ and
$v$, one can conclude that the statement of the proposition holds true.  
\end{proof}

\begin{corollary} \label{crlr:ExactPenaly_SlaterCond}
Let $K$ be finite dimensional and there exist $c \ge 0$ such that the penalty function 
$\Phi_c(\cdot) = f_0(\cdot) + c \dist(F(\cdot), - K)$ is coercive on $Q$. Then for any compact subset 
$\mathscr{D}_0 \subseteq \mathscr{D}_s$ there exists $\mu_* \ge 0$ such that for all $\mu \ge \mu_*$ and for any 
$z \in \mathscr{D}_0$ and $v \in \partial h_0(z)$ there exists a globally optimal solution of the penalized problem
\eqref{prob:Penalized} and this problem is exact, in the sense that a pair $(x_*, s_*)$ is a solution of this
problem if and only if $s_* = 0$ and $x_*$ is a solution of the corresponding non-penalized problem
\eqref{prob:NonPenalized}.
\end{corollary}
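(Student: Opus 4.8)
The plan is to deduce the corollary directly from Proposition~\ref{prp:ExactPenaltySubproblem} by checking that its hypotheses hold, with constants independent of $z$, on the given compact set $\mathscr{D}_0 \subseteq \mathscr{D}_s$. Concretely, I must produce uniform constants $a > 0$, $L_g > 0$, $L_h > 0$ and, for each $z \in \mathscr{D}_0$ and $v \in \partial h_0(z)$, a radius $r > 0$ and a globally optimal solution $x_*$ of the non-penalized problem \eqref{prob:NonPenalizedExact} satisfying the error bound \eqref{eq:UniformErrorBound}. Since $\mathscr{D}_0 \subseteq \mathscr{D}_s \subseteq \mathscr{D}$, the feasible region of \eqref{prob:NonPenalizedExact} is nonempty for every $z \in \mathscr{D}_0$, and Lemma~\ref{lem:PenalizedProblem_WellPosedness} already guarantees that a globally optimal solution $x_*$ exists; the whole burden is therefore to make the three constants and the radius \emph{uniform} over $\mathscr{D}_0$, after which invoking Proposition~\ref{prp:ExactPenaltySubproblem} finishes the argument.

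The bounds $L_h$ and $L_g$ are the routine part. The subdifferential map of the finite convex function $h_0$ is locally bounded (\cite[Crlr.~24.5.1]{Rockafellar}), so covering the compact set $\mathscr{D}_0$ by finitely many neighbourhoods on which $\partial h_0$ is bounded yields a single $L_h$ with $\|v\| \le L_h$ for all $z \in \mathscr{D}_0$ and $v \in \partial h_0(z)$. For $L_g$ I first confine the optimal solutions $x_*$ to a fixed bounded set. By the subgradient inequality $g_0(x) - \langle v, x - z\rangle \ge f_0(x) + h_0(z)$, and by coercivity of $\Phi_c$ the function $f_0$ is coercive on the feasible region of $(\mathcal{P})$, which contains the feasible region of \eqref{prob:NonPenalizedExact}; thus the objective of \eqref{prob:NonPenalizedExact} is coercive on its feasible region, uniformly in $z \in \mathscr{D}_0$ up to the additive term $h_0(z)$, which is bounded on the compact set $\mathscr{D}_0$. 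A uniform upper bound on the optimal values then comes from Corollary~\ref{crlr:PerturbedSolution} (applied with $\Phi = G$, $\Psi = H$, $\mathcal{A} = A$, $\mathcal{K} = K$): near each point of $\mathscr{D}_s$ it furnishes a feasible point $\xi(z)$ that stays bounded as $z$ ranges over a neighbourhood, and a finite subcover of $\mathscr{D}_0$ makes $\{\xi(z)\}$ uniformly bounded, so the optimal values $g_0(x_*) - \langle v, x_* - z\rangle \le g_0(\xi(z)) + L_h(\|\xi(z)\| + \|z\|)$ are bounded above. Coercivity then confines all minimizers $x_*$ to one bounded set, and since $g_0$ is finite convex it is Lipschitz on a bounded neighbourhood of that set, giving a uniform $L_g$.

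The crux is the uniform local error bound \eqref{eq:UniformErrorBound}, which I would obtain from Robinson's metric regularity of the convex multifunction $M_z$, exactly as in the proof of Lemma~\ref{lem:PerturbedSolution}. For $z \in \mathscr{D}_s$ Slater's condition means $0 \in \interior\{ G(x) - H(z) - DH(z)(x - z) + K \mid x \in A\}$, so \cite[Thrm.~1]{Robinson76} produces a bounded feasible reference point $\overline{x}(z)$ and a modulus $\eta(z) > 0$ with $\eta(z) B_Y \subseteq M_z(\overline{x}(z) + B_X)$, whereupon \cite[Thrm.~2]{Robinson76} gives $\dist(x, M_z^{-1}(0)) \le \frac{2}{\eta(z)}(1 + \|x - \overline{x}(z)\|)\, \dist(F_z(x), -K)$; for a fixed small $r$ the factor $1 + \|x - \overline{x}(z)\|$ is bounded, so this is \eqref{eq:UniformErrorBound} with $a(z)$ proportional to $\eta(z)$. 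The real work is to show $\inf_{z \in \mathscr{D}_0} \eta(z) > 0$. Because $G$ is continuous and $H$ is continuously Fr\'{e}chet differentiable, the map $z \mapsto F_z$ varies continuously (uniformly on bounded sets), so the sets $\{F_z(x) + K\}$ vary continuously in $z$; combined with finite-dimensionality of $K$ and compactness of $\mathscr{D}_0$, a contradiction argument settles it: a sequence $z_n \to z_* \in \mathscr{D}_0$ with $\eta(z_n) \to 0$ would force the interiority in Slater's condition to fail at $z_* \in \mathscr{D}_s$, contradicting $z_* \in \mathscr{D}_s$. I expect this uniform lower bound on the Slater modulus to be the main obstacle, since it is the only place where the openness of $\mathscr{D}_s$ and the continuity of the problem data must be exploited quantitatively.

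Once $a$, $L_g$, $L_h$, and $r$ have been made uniform over $\mathscr{D}_0$, all hypotheses of Proposition~\ref{prp:ExactPenaltySubproblem} are satisfied, and that proposition yields a single $\mu_* \ge 0$ such that for every $\mu \ge \mu_*$, $z \in \mathscr{D}_0$, and $v \in \partial h_0(z)$ the penalized problem \eqref{prob:PenalizedExact} has a globally optimal solution and is exact in the stated sense, which completes the proof.
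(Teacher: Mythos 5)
Your proposal is correct and takes essentially the same route as the paper: the paper likewise reduces the corollary to Proposition~\ref{prp:ExactPenaltySubproblem} by working around each point of $\mathscr{D}_s$, using Robinson's theorems to get a covering estimate, deducing the uniform error bound \eqref{eq:UniformErrorBound}, confining the minimizers to a ball $B(0,R)$ via coercivity and the subgradient inequality (whence $L_g$), and taking $L_h$ from local boundedness of $\partial h_0$, with compactness of $\mathscr{D}_0$ finishing the argument. The only cosmetic differences are that the paper obtains your uniform lower bound on the Slater modulus by a direct perturbation estimate --- Robinson's Lemma~2 applied to the inclusion $\eta B_Y \subseteq M_z(x_0 + B_{\mathbb{R}^d}) \subseteq M_u(x_0 + B_{\mathbb{R}^d}) + \frac{\eta}{3} B_Y$, which yields $\frac{\eta}{2} B_Y \subseteq M_u(x_0 + B_{\mathbb{R}^d})$ for all $u$ near $z$ --- rather than by your compactness-contradiction argument (whose missing stability step is exactly this estimate), and it extracts the uniformly bounded feasible points $x(u) \in x_0 + B_{\mathbb{R}^d}$ directly from that covering inclusion instead of invoking Corollary~\ref{crlr:PerturbedSolution}.
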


\begin{proof}
Let us verify that for any $z \in \mathscr{D}_s$ there exists $r > 0$ such that the assumptions of the previous
proposition are satisfied for $\mathscr{D}_0 = B(z, r) \cap Q$. Then one can easily verify that these assumptions are
satisfied for any compact subset $\mathscr{D}_0 \subseteq \mathscr{D}_s$. 

\textbf{Uniform error bound}. Fix any $z \in \mathscr{D}_s$ and choose some $x_0 \in Q$ such that $0 \in M_z(x_0)$. 
By the definition of the set $\mathscr{D}_s$ one has $0 \in \interior M_z(\mathbb{R}^d)$. Hence by
\cite[Thm.~1]{Robinson76} there exists $\eta > 0$ such that $\eta B_Y \subseteq M_z(x_0 + B(0, 1))$.

From the fact that $H$ is continuously Fr\'{e}chet differentiable it follows that there exists 
$r < \min\{ 1, \eta/9(1 + \| D H(z) \|) \}$ such that
\[
  \| H(u) - H(z) \| \le \frac{\eta}{9}, \quad
  \| D H(u) - D H(z) \| \le \frac{\eta}{9(2 + \| x_0 \| + \| z \|)}
\]
for any $u \in B(z, r) \cap Q$. Choose any $y \in M_{z}(x_0 + B(0, 1))$. By definition there
exist $x \in (x_0 + B(0, 1)) \cap Q$ and $w \in K$ such that $y = F_z(x) + w$. Observe that
\begin{align*}
  \| F_u(x) + w - y \| &= \| F_u(x) - F_z(x) \| 
  \le \| H(u) - H(z) \| 
  \\
  &+ \| D H(u) - D H(z) \| \| x - u \| + \| D H(z) \| \| u - z \|
  \le \frac{\eta}{3}
\end{align*}
for all $u \in B(z, r) \cap Q$, which implies that
\[
  \eta B_Y \subseteq M_z(x_0 + B(0, 1)) \subseteq
  M_u(x_0 + B(0, 1)) + \frac{\eta}{3} B_Y \quad \forall u \in B(z, r) \cap Q.
\]
Consequently, by \cite[Lemma~2]{Robinson76} one has
\begin{equation} \label{eq:UniformCoverging}
  \frac{\eta}{2} B_Y \subseteq M_u(x_0 + B(0, 1)) \quad \forall u \in B(z, r) \cap Q,
\end{equation}
which with the use of \cite[Thm.~2]{Robinson76} yields that 
\begin{equation} \label{eq:UniformMetricReg}
  \dist(x, M_u^{-1}(0)) \le \frac{\eta}{2}\big( 1 + \| x - x_0 \| \big) \dist(0, M_u(x))
\end{equation}
for all $x \in \mathbb{R}^d$ and $u \in B(z, r) \cap Q$.

\textbf{The existence of $L_g$ and $L_h$.}~Let us show that one can find $R > 0$ such that for all 
$u \in B(z, r) \cap Q$ and $v \in \partial h_0(u)$ globally optimal solutions of the problem
\begin{equation} \label{prob:NonPenalized_BoundedSolutions}
  \min \: g_0(x) - \langle v, x \rangle \quad
  \text{s.t.} \quad G(x) - H(u) - D H(u)(x - u) \preceq_K 0, \quad x \in Q
\end{equation}
(which exist by Lemma~\ref{lem:PenalizedProblem_WellPosedness}) lie in the ball $B(0, R)$. Then taking into account 
the fact that by definition $\dist(0, M_u(x)) = \dist(F_u(x), -K)$, one obtains that for all $u \in B(z, r) \cap Q$
and $v \in \partial h_0(u)$, and for any globally optimal solution $x_*$ of problem 
\eqref{prob:NonPenalized_BoundedSolutions} the following inequality holds true:
\[
  \dist(F_z(x), -K) \ge \frac{2}{\eta(2 + R + \| x_0 \|)} \dist(x, M_z^{-1}(0)) 
  \quad \forall x \in B(x_*, 1) \cap Q
\]
(cf. \eqref{eq:UniformErrorBound}). Moreover, one can take as $L_g > 0$ a Lipschitz constant of $g_0$ on the set
$B(0, R + 1)$ (recall that a convex function finite on $\mathbb{R}^d$ is Lipschitz continuous on bounded sets; see,
e.g. \cite[Thm.~10.4]{Rockafellar}), while the existence of $L_h$ such that $\| v \| \le L_h$ for all
$v \in \partial h_0(u)$ and $u \in B(z, r)$ follows from the local boundedness of the subdifferential mapping
\cite[Cor.~24.5.1]{Rockafellar}. Therefore, all assumptions of Proposition~\ref{prp:ExactPenaltySubproblem} are
satisfied for $\mathscr{D}_0 = B(z, r) \cap Q$, and one can conclude that the corollary holds true.

Thus, it remains to prove that globally optimal solutions of problem \eqref{prob:NonPenalized_BoundedSolutions} lie
within some ball $B(0, R)$. 

\textbf{The boundedness of globally optimal solutions.}~Indeed, denote $C_1 := \min\{ h_0(u) \mid u \in B(z, r) \cap Q
\}$. By the definition of subgradient one has
\[
  g_0(x) - \langle v, x - u \rangle \ge g_0(x) - h_0(x) + h_0(u) \ge f_0(x) + C_1.
\]
Furthermore, from inclusion \eqref{eq:UniformCoverging} it follows that for any $u \in B(z, r) \cap Q$ there exists
$x(u) \in x_0 + B(0, 1)$ such that $0 \in M_u(x(u))$, i.e. $x(u)$ is a feasible point of problem
\eqref{prob:NonPenalized_BoundedSolutions}. Finally, as was noted in the proof of
Lemma~\ref{lem:PenalizedProblem_WellPosedness}, the feasible region of problem
\eqref{prob:NonPenalized_BoundedSolutions} is contained in the feasible region of the problem $(\mathcal{P})$, which we
denote by $\Omega$. Therefore globally optimal solutions of problem \eqref{prob:NonPenalized_BoundedSolutions} are
contained in the set $S := \{ x \in \Omega \mid f_0(x) \le |C_1| + C_2 \}$, where
\begin{align*}
  C_2 &:= \sup_{u \in B(z, r) \cap Q} \big( g_0(x(u)) - \langle v, x(u) - u \rangle \big)
  \\0
  &\le \sup_{x \in x_0 + B(0, 1)} g_0(x) + L_h \Big( \| x_0 \| + 1 + \| z \| + r \Big) < + \infty.
\end{align*}
It remains to note that the set $S$ does not depend on $u \in B(z, r) \cap Q$ and $v \in \partial h_0(u)$, and is
contained in some ball $B(0, R)$, since $\Omega = \{ x \in Q \mid F(x) \in - K \}$ and by our assumption the penalty
function $\Phi_c(\cdot) = f_0(\cdot) + c \dist(F(\cdot), - K)$ is coercive on $Q$.  
\end{proof}

\begin{remark} \label{rmrk:ConvergenceViaExactPenalization}
Let a sequence $\{ x_n \}$ be generated by Algorithmic Pattern~\ref{alg:CCP_penalty} and suppose that there exists 
$m \in \mathbb{N}$ such that either the assumptions of Proposition~\ref{prp:ExactPenaltySubproblem} are satisfied for
some set $\mathscr{D}_0 \subseteq \mathscr{D}$ containing the sequence $\{ x_n \}_{n \ge m}$ or this sequence is
contained in a compact subset of the set $\mathscr{D}_s$ (note that since $\mathscr{D}_s$ is an open set, it is
sufficient to suppose that the sequence $x_n$ converges to a point $x_* \in \mathscr{D}_s$). Then there exists a
threshold $\tau_* > 0$ such that if for some $k \ge m$ one has $\| t_k \| \ge \tau_*$, then the sequence 
$\{ x_n \}_{n \ge k + 1}$ is feasible for the problem $(\mathcal{P})$ and coincides with a sequence generated
by Algorithmic Pattern~\ref{alg:CCP} with starting point $x_{k + 1}$. In this case one can apply
Theorem~\ref{thrm:CCP_Convergence} to analyze the behaviour of the sequence $\{ x_n \}_{n \ge k + 1}$ and its
convergence to a critical point for the problem $(\mathcal{P})$. Note that to prove this result one must suppose that
$\tau_{\max} > \tau_*$, i.e. the maximal admissible norm of the penalty parameters $t_n$ is sufficiently large.
\end{remark}

Let us give a simple example illustrating Proposition~\ref{prp:ExactPenaltySubproblem} and
Corollary~\ref{crlr:ExactPenaly_SlaterCond}, as well as behaviour of sequences generated by
Algorithmic Patterns~\ref{alg:CCP} and \ref{alg:CCP_penalty}.

\begin{example} \label{ex:AnalyticalComputDCA}
Let $d = 1$, $Y = \mathbb{R}$, and $K = \mathbb{R}_+$, i.e. $y_1 \preceq_K y_2$ means that $y_1 \le y_2$ for all
$y_1, y_2 \in \mathbb{R}$. Consider the following inequality constrained DC optimization problem:
\begin{equation} \label{prob:OneInequalConstr}
  \min \enspace (x - 0.5)^2 \quad \text{subject to} \quad x^2 - x^4 \le 0.
\end{equation}
We define $g_0(x) = (x - 0.5)^2$, $h_0(x) = 0$, $G(x) = x^2$, and $H(x) = x^4$ for all $x \in \mathbb{R}$.
The feasible region has the form $\Omega = (- \infty, -1] \cup \{ 0 \} \cup [1, + \infty)$. The points $x_* = 1$ and
$x_* = 0$ are globally optimal solutions of problem \eqref{prob:OneInequalConstr}, while the point $x_* = -1$ is a
locally optimal solution. All these points are critical, and one can easily verify that there are no other critical
points of the problem under consideration.

\textbf{Algorithmic Pattern~\ref{alg:CCP}}.~For any $z \in \mathbb{R}$ the linearized convex problem for problem
\eqref{prob:OneInequalConstr} has the form 
\begin{equation} \label{prob:OneInequalConstr_Linearized}
  \min_x \enspace (x-0.5)^2 \quad \text{subject to} \quad x^2 - z^4 - 4 z^3 (x - z) \le 0.
\end{equation}
The inequality constraint can be rewritten as follows:
\[
  \big( x - 2 z^3 \big)^2 - 4 z^4 \left( z^2 - \frac{3}{4} \right) \le 0.
\]
Therefore
\[
  \mathscr{D} = \left( - \infty, - \frac{\sqrt{3}}{2} \right] \cup \{ 0 \} \cup 
  \left[ \frac{\sqrt{3}}{2}, + \infty \right), \quad
  \mathscr{D}_s = \interior \mathscr{D},
\]
that is, the feasible region of problem \eqref{prob:OneInequalConstr_Linearized} is nonempty if and only if 
$z \in \mathscr{D}$, and Slater's condition holds true for this problem if and only if 
$z \in \interior \mathscr{D} = \mathscr{D}_s$. Furthermore, for $z = 0$ the feasible region of problem 
\eqref{prob:OneInequalConstr_Linearized} consists of the single point $x = 0$,
while for any $z \in \mathscr{D}$, $z \ne 0$, the feasible region has the form
\[
  \left[ 2 z^3 - 2 z \sqrt{z^2 - \frac{3}{4}}, 2 z^3 + 2 z \sqrt{z^2 - \frac{3}{4}} \right].
\]
As was noted multiple times above, this set is contained in the feasible region of problem
\eqref{prob:OneInequalConstr}, which implies that for any $z \ge \sqrt{3} / 2$ it is contained in $[1, + \infty)$, while
for any $z \le - \sqrt{3} / 2$ it is contained in $(- \infty, -1]$. Consequently, a sequence $\{ x_n \}$ generated
by Algorithmic Pattern~\ref{alg:CCP}, i.e. $x_{n + 1}$ is defined as an optimal solution of the problem 
\[
  \min_x \enspace (x - 0.5)^2 \quad \text{subject to} \quad x^2 - x_n^4 - 4 x_n^3 (x - x_n) \le 0,
\]
is contained in the set $(- \infty, -1]$, if $x_0 \le - 1$, and in the set $[1, + \infty)$, if $x_0 \ge 1$. In the case
$x_0 = 0$ one has $x_n \equiv 0$. Moreover, one can easily check that all assumptions of
Theorem~\ref{thrm:CCP_Convergence} are satisfied, and $x_{n + 1} > x_n$ for all $n \in \mathbb{N}$, if $x_0 < - 1$,
while $x_{n + 1} < x_n$ for all $n \in \mathbb{N}$, if $x_0 > 1$. Therefore, a sequence $\{ x_n \}$ generated by
Algorithmic Pattern~\ref{alg:CCP} converges to the locally optimal solution $x_* = -1$, if $x_0 \le - 1$, and it
converges to the
globally optimal solution $x_* = 1$, if $x_0 \ge 1$. This example shows that if the feasible region of a problem under
consideration consists of several disjoint convex components, then a sequence generated by Algorithmic
Pattern~\ref{alg:CCP} lies
within the component containing the initial guess $x_0$ and converges to a critical point from this component, i.e. a
sequence generated by Algorithmic Pattern~\ref{alg:CCP} cannot jump from one convex component of the feasible region to
another.
Let us note that one can easily prove this result in the general case.

\textbf{The penalized subproblem.}~Let us now consider Algorithmic Pattern~\ref{alg:CCP_penalty}. To this end, we
first analyze the exactness of the penalized subproblem that has the form
\begin{equation} \label{prob:OneInequalConstr_Penalized}
  \min_{(x, s)} \: (x - 0.5)^2 + \mu t_0 s \quad \text{subject to} \quad x^2 - z^4 - 4 z^3 (x - z) \le s, \quad s \ge 0,
\end{equation}
where $t_0 > 0$. One can easily verify that $(x_*, s_*)$ is a globally optimal solution of this problem if and only if
$s_* = \max\{ x^2 - z^4 - 4 z^3 (x - z), 0 \}$ and $x_*$ is a globally optimal solution of the unconstrained problem
\[
  \min \enspace (x - 0.5)^2 + \mu t_0 \max\{ x^2 - z^4 - 4 z^3 (x - z), 0 \}.
\]
Note that for $z \in \mathscr{D} \setminus \mathscr{D}_s$ this problem takes the form
\[
  \min \enspace (x - 0.5)^2 + \mu t_0 (x - 2 z^3)^2.
\]
Clearly, for any $\mu > 0$ a unique point of global minimum of this problem does not belong to the feasible region of
the corresponding non-penalized problem \eqref{prob:OneInequalConstr_Linearized}, which consists of the single point 
$\{ 2 z^3 \}$. Thus, the penalized problem \eqref{prob:OneInequalConstr_Penalized} is not exact for all 
$z \in \mathscr{D} \setminus \mathscr{D}_s$ (one can verify that this result is connected to the fact that error
bound \eqref{eq:UniformErrorBound} from Proposition~\ref{prp:ExactPenaltySubproblem} is not valid for such $z$). 

By Corollary~\ref{crlr:ExactPenaly_SlaterCond} for any compact subset $\mathscr{D}_0 \subset \mathscr{D}_s$ the
penalized problem \eqref{prob:OneInequalConstr_Penalized} is exact for all $z \in \mathscr{D}_0$, in the sense that
there exists $\mu_* \ge 0$ such that for all $\mu \ge \mu_*$ a pair $(x_*, s_*)$ is a globally optimal solution of
problem \eqref{prob:OneInequalConstr_Penalized} if and only if $s_* = 0$ and $x_*$ is a globally optimal solution of the
non-penalized problem \eqref{prob:OneInequalConstr_Linearized}. Denote the greatest lower bound of all such $\mu_*$ by
$\mu_*(\mathscr{D}_0)$. 

One can verify that problem \eqref{prob:OneInequalConstr_Penalized} is not exact for all $z \in \mathscr{D}_s$
simultaneously, due to the fact that $\mu_*(\{ z \}) \to + \infty$ as $z$ tends to the boundary of $\mathscr{D}_s$.
For the sake of shortness, we do not present a detailed proof of this result and leave it to the interested reader. Here
we only mention that this result can be proved by noting that $\mu_*(\{ z \})$ is equal to the norm of an optimal
solution of the dual problem of \eqref{prob:OneInequalConstr_Linearized} divided by $t_0$.

\textbf{Algorithmic Pattern~\ref{alg:CCP_penalty}}.~Let us now consider the performance of
Algorithmic Pattern~\ref{alg:CCP_penalty}. To this end, put $x_0 = -1$, $t_0 = 1$, $\mu = 2$, $\varkappa = 10^{-6}$, and
$\tau_{\max} = 1024$ in Algorithmic Pattern~\ref{alg:CCP_penalty}. Note that the initial point $x_0$ is critical for
problem \eqref{prob:OneInequalConstr}, but is not a globally optimal solution of this problem. Solving the penalized
problem \eqref{prob:OneInequalConstr_Penalized} with $z = x_0$ one obtains that $x_1 = -0.75$. Thus,
Algorithmic Pattern~\ref{alg:CCP_penalty}, unlike the DCA, managed to escape a convex component of the feasible region
containing the initial guess and, furthermore, to ``jump off'' from a point of local minimum. Numerical simulation
showed that the sequence $\{ x_n \}$ generated by Algorithmic Pattern~\ref{alg:CCP_penalty} converges to the point 
$x_* \approx 0.001$. If one sets $\tau_{\max} = + \infty$ and $\varkappa = 0$, then the sequence converges to the
globally optimal solution $x_* = 0$. However, note that if one chooses $t_0 \ge \mu_*(\{ -1 \}) = 1.5$, then the method
terminates after the first iteration with $x_1 = x_0$.

Thus, it seems advisable to choose $t_0$ with sufficiently small norm (and maybe even perform several iterations before
increasing the penalty parameter), to enable Algorithmic Pattern~\ref{alg:CCP_penalty} to find a better solution (see
the appendix). Moreover, even if a feasible point $x_0$ is known, it is reasonable to use Algorithmic
Pattern~\ref{alg:CCP_penalty} instead of Algorithmic Pattern~\ref{alg:CCP} due to the ability of the penalized method to
escape convex components of the feasible region and find better locally optimal solutions than the original method.
\end{example}

\subsection{Two Approaches to Convergence Analysis}
\label{sect:TwoApproaches}

In the general case, the feasible region of the non-penalized problem \eqref{prob:NonPenalizedSequence} might be empty
for all $n \in \mathbb{N}$. Then a sequence $\{ x_n \}$ generated by Algorithmic Pattern~\ref{alg:CCP_penalty} is
infeasible for the problem $(\mathcal{P})$, and Proposition~\ref{prp:ExactPenaltySubproblem} along with
Corollary~\ref{crlr:ExactPenaly_SlaterCond} do not allow one to say anything about convergence of the method.
Moreover, even if $\tau_{\max} = + \infty$, i.e. the norm of $t_n$ can increase unboundedly, there is no guarantee that
limit points of the sequence $\{ x_n \}$ are feasible for the original problem. 

To avoid such pathological cases, one usually either adopts  an `a priori approach' and supposes that a
suitable regularity assumption on constraints (``constraint qualification'') holds true at all infeasible points (this
approach was widely used, e.g. for convergence analysis of exact penalty methods in \cite{Polak}) or adopts an `a
posteriori approach' and supposes that a sequence generated by the method converges to a point, at which an appropriate
constraint qualification holds true (such approach was used, e.g. for an analysis of trust region methods in
\cite{ConnGouldToint}). For the sake of completeness, we present two convergence theorems for Algorithmic
Pattern~\ref{alg:CCP_penalty}, one of which is based on the a priori approach, while the other one is based on the a
posteriori one and was hinted at in Remark~\ref{rmrk:ConvergenceViaExactPenalization}. Both these theorems ensure the
convergence of Algorithmic Pattern~\ref{alg:CCP_penalty} to a feasible and critical point, provided $\tau_{\max}$ is
sufficiently large.

We start with the a priori approach. To this end we need to introduce the following extension of the definition of
critical point to the case of infeasible points.

\begin{definition} \label{def:GenCriticalPoint}
Let $t \succ_{K^*} 0$ be given. A point $x_* \in Q$ is said to be a \textit{generalized $t$-critical point} of 
the problem $(\mathcal{P})$, if there exist $v_* \in \partial h_0(x_*)$ and $s_* \succeq_K 0$ such that the pair
$(x_*, s_*)$ is a globally optimal solution of the problem
\begin{equation} \label{prob:GenCriticality}
\begin{split}
  &\min_{(x, s)} \enspace g_0(x) - \langle v_*, x \rangle + \langle t, s \rangle \quad
  \\
  &\text{s.t.} \enspace G(x) - H(x_*) - D H(x_*)(x - x_*) \preceq_K s, \quad s \succeq_K 0, \quad x \in Q.
\end{split}
\end{equation}
\end{definition}

Let us give two useful characterizations of the generalized criticality.

\begin{proposition} \label{prp:GeneralizedCriticality}
Let $x_* \in Q$ and $t \succ_{K^*} 0$ be given. The following statements hold true:
\begin{enumerate}
\item{$x_*$ is a generalized $t$-critical point if and only if there exist $v_* \in \partial h_0(x_*)$, 
$s_* \succeq_K 0$, and $\lambda_*, \mu_* \in K^*$ such that $F(x_*) - s_* \preceq_K 0$, $t = \lambda_* + \mu_*$, and
\[
  0 \in \partial_x L(x_*, \lambda_*) + N_Q(x_*), \quad
  \langle \lambda_*, F(x_*) - s_* \rangle = 0, \quad 
  \langle \mu_*, s_* \rangle = 0,
\]
where $L(x, \lambda) = g_0(x) - \langle v_*, x \rangle + \langle \lambda, G(x) - H(x_*) - D H(x_*)(x - x_*) \rangle$
and $\partial_x L(x_*, \lambda_*)$ is the subdifferential of $L(\cdot, \lambda_*)$ at $x_*$ in the sense of convex
analysis;
\label{stat:GenCritical_KKT}}

\item{if $x_*$ is feasible for the problem $(\mathcal{P})$ and is a generalized $t$-critical point, then $x_*$ is a
critical point for the problem $(\mathcal{P})$; conversely, if $x_*$ is a critical point for the problem $(\mathcal{P})$
satisfying optimality conditions from Corollary~\ref{crlr:OptimalityCond_Lagrange} for some $\lambda_* \in K^*$ such
that $t \succeq_{K^*} \lambda_*$, then $x_*$ is a generalized $t$-critical point.
\label{stat:GenCritical_Feasible}}
\end{enumerate}
\end{proposition}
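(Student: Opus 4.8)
The plan is to handle the two statements in order, deriving statement~\ref{stat:GenCritical_KKT} from the standard Karush--Kuhn--Tucker theory applied to the convex problem \eqref{prob:GenCriticality}, and then obtaining statement~\ref{stat:GenCritical_Feasible} from it together with a short direct argument. For statement~\ref{stat:GenCritical_KKT}, recall that by definition $x_*$ is a generalized critical point for $t$ iff there are $v_* \in \partial h_0(x_*)$ and $s_* \succeq_K 0$ for which $(x_*, s_*)$ globally solves the convex problem \eqref{prob:GenCriticality}. Writing $F_{x_*}(x) = G(x) - H(x_*) - DH(x_*)(x - x_*)$, I would attach a multiplier $\lambda_* \in K^*$ to the cone constraint $F_{x_*}(x) - s \preceq_K 0$ and a multiplier $\mu_* \in K^*$ to the constraint $-s \preceq_K 0$, and read off the optimality conditions for convex cone constrained problems (as in \cite[Theorem~3.6 and Proposition~2.106]{BonnansShapiro}, already used in Corollary~\ref{crlr:OptimalityCond_Lagrange}). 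Stationarity in $x$ gives $0 \in \partial_x L(x_*, \lambda_*) + N_A(x_*)$ with $L$ exactly the function in the proposition; stationarity in the free slack variable forces the coefficient of the linear term $\langle t - \lambda_* - \mu_*, s \rangle$ to vanish, i.e. $t = \lambda_* + \mu_*$; and the two complementary slackness conditions are $\langle \lambda_*, F(x_*) - s_* \rangle = 0$ and $\langle \mu_*, s_* \rangle = 0$, with primal feasibility $F(x_*) - s_* \preceq_K 0$, $s_* \succeq_K 0$.

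The sufficiency of this system is the easy half and needs no constraint qualification: the convex Lagrangian $\mathcal{L}(x, s) = g_0(x) - \langle v_*, x - x_* \rangle + \langle t, s \rangle + \langle \lambda_*, F_{x_*}(x) - s \rangle - \langle \mu_*, s \rangle$ is, by the stationarity relations, minimised over $A \times Y$ at $(x_*, s_*)$, while complementary slackness makes $\mathcal{L}(x_*, s_*)$ equal to the objective value there, so that the objective dominates $\mathcal{L}$ on the whole feasible set. The main obstacle is the necessity half (generalized criticality $\Rightarrow$ existence of $\lambda_*, \mu_*$), which does require a regularity condition for \eqref{prob:GenCriticality}. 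Here I would exploit the slack variable: fixing any $x_0 \in A$ and letting $s$ range over $Y$, the set $\{ (F_{x_*}(x) - s, -s) + K \times K \mid x \in A,\ s \in Y \}$ already contains every pair $(y_1, y_2)$ with $y_1 - y_2 - F_{x_*}(x_0) \in K - K$, so under the standing assumption that $K$ is generating this set is all of $Y \times Y$ and Robinson's condition $0 \in \interior\{\cdots\}$ holds automatically (if $K$ is merely solid, a Slater point $(x_0, s)$ with $s, s - F_{x_*}(x_0) \in \interior K$ works). This is the only place a hypothesis on $K$ beyond properness is used.

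For statement~\ref{stat:GenCritical_Feasible}, the forward implication is elementary. If $x_*$ is feasible, then $F(x_*) \preceq_K 0$, hence $(x_*, 0)$ is feasible for \eqref{prob:GenCriticality} with objective value $g_0(x_*)$; comparing with the optimal value $g_0(x_*) + \langle t, s_* \rangle$ attained at $(x_*, s_*)$ gives $\langle t, s_* \rangle \le 0$, and since $t \succ_{K^*} 0$ and $s_* \succeq_K 0$ this forces $s_* = 0$. Restricting the global optimality of $(x_*, 0)$ to the slice $s = 0$, whose constraint is precisely $F_{x_*}(x) \preceq_K 0$, $x \in A$, shows (up to the additive constant $h_0(x_*)$) that $x_*$ globally solves \eqref{prob:AuxiliaryConvexProb} with $v = v_*$, which by Theorem~\ref{thrm:OptCond} and the definition of criticality means $x_*$ is critical for $(\mathcal{P})$.

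For the converse, suppose $x_*$ is critical and satisfies the conclusion of Corollary~\ref{crlr:OptimalityCond_Lagrange} with $\lambda_* \in K^*$, $\langle \lambda_*, F(x_*) \rangle = 0$, and $t \succeq_{K^*} \lambda_*$. The plan is to set $s_* = 0$ and $\mu_* = t - \lambda_*$ (which lies in $K^*$ exactly because $t \succeq_{K^*} \lambda_*$) and verify that $(v_*, s_*, \lambda_*, \mu_*)$ satisfies the KKT system of statement~\ref{stat:GenCritical_KKT}: feasibility $F(x_*) - s_* \preceq_K 0$ holds since criticality forces $x_* \in \Omega(x_*)$, i.e. $F(x_*) \preceq_K 0$; both complementary slackness conditions collapse to $\langle \lambda_*, F(x_*) \rangle = 0$ and $\langle \mu_*, 0 \rangle = 0$; the relation $t = \lambda_* + \mu_*$ is immediate; and, after identifying the $x$-derivative of the linearised term in $L$ with $D(\langle \lambda_*, F(\cdot) \rangle)(x_*) = D(\langle \lambda_*, G(\cdot) \rangle)(x_*) - D(\langle \lambda_*, H(\cdot) \rangle)(x_*)$, the stationarity condition $0 \in \partial_x L(x_*, \lambda_*) + N_A(x_*)$ is seen to coincide with the inclusion $v_* \in \partial g_0(x_*) + D(\langle \lambda_*, F(\cdot) \rangle)(x_*) + N_A(x_*)$ from Corollary~\ref{crlr:OptimalityCond_Lagrange}. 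Invoking the already-established sufficiency part of statement~\ref{stat:GenCritical_KKT} then yields that $x_*$ is generalized critical for $t$, completing the proof.
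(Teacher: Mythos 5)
Your proof is correct and takes essentially the same route as the paper: the paper likewise proves part~\ref{stat:GenCritical_KKT} by recasting \eqref{prob:GenCriticality} as a convex cone constrained problem in the variables $(x, s)$ with constraint cone $K \times K$ and citing \cite[Thrm.~3.6]{BonnansShapiro}, and it proves part~\ref{stat:GenCritical_Feasible} by comparing $(x_*, s_*)$ with $(x_*, 0)$ and, for the converse, by checking the system of part~\ref{stat:GenCritical_KKT} with $s_* = 0$ and $\mu_* = t - \lambda_*$. One discrepancy deserves note: your complementarity condition reads $\langle \lambda_*, F(x_*) - s_* \rangle = 0$, while the proposition as printed has $\langle \lambda_*, F(x_*) + s_* \rangle = 0$; your sign is the correct one, since the constraint active at $(x_*, s_*)$ is $F(x_*) - s_* \preceq_K 0$ and the minus version is also exactly what your no-CQ sufficiency argument via the Lagrangian requires, so the plus sign in the statement is evidently a typo (harmless in part~\ref{stat:GenCritical_Feasible}, where $s_* = 0$ makes the two versions coincide). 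A second, minor difference in your favour: the paper asserts without justification that the qualification $0 \in \interior\{ \widehat{F}(x, s) + K \times K \mid x \in A, \: s \in Y \}$ holds, whereas you observe that the slack variable reduces it to the condition $K - K = Y$ (or to the existence of a Slater point when $\interior K \ne \emptyset$); this hypothesis, which is imposed in the later results where the proposition is actually applied, is genuinely needed, since for a non-generating cone the qualification can fail.
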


\begin{proof}
\ref{stat:GenCritical_KKT}. Problem \eqref{prob:GenCriticality} can be rewritten as a convex cone
constrained optimization problem of the form
\begin{equation} \label{prob:ConeConstrained_Infeasible}
\begin{split}
  &\minimize_{(x, s)} \: g_0(x) - \langle v_*, x \rangle + \langle t, s \rangle \quad
  \text{subject to}  \quad x \in Q,
  \\
  &
  \widehat{F}(x, s) = \begin{pmatrix} G(x) - H(x_*) - D H(x_*)(x - x_*) - s \\ - s \end{pmatrix} \in 
  \begin{pmatrix} -K \\ -K \end{pmatrix}.
\end{split}
\end{equation}
Note that the following constraint qualification holds true for this problem:
\[
  0 \in \interior\Big\{ \widehat{F}(x, s) + K \times K \Bigm| x \in Q, \: s \in Y \Big\}.
\]
Therefore, $x_*$ is a generalized $t$-critical point if and only if there exist $v_* \in \partial h_0(x_*)$ and 
$s_* \succeq_K 0$ such that the pair $(x_*, s_*)$ satisfies the KKT optimality conditions for problem 
\eqref{prob:ConeConstrained_Infeasible} (see, e.g. \cite[Thm.~3.6]{BonnansShapiro}). Rewriting the KKT optimality
conditions in terms of problem \eqref{prob:GenCriticality} we arrive at the required result.

\ref{stat:GenCritical_Feasible}. Let $x_*$ be a generalized $t$-critical point. Then by definition there exist 
$v_* \in \partial h_0(x_*)$ and $s_* \succeq_K 0$ such that the pair $(x_*, s_*)$ is a globally optimal solution of
problem \eqref{prob:GenCriticality}. Since the point $x_*$ is feasible for the problem $(\mathcal{P})$, the pair 
$(x_*, 0)$ is feasible for problem \eqref{prob:GenCriticality}. Moreover, one 
has $g_0(x_*) \le g_0(x_*) + \langle t, s_* \rangle$, since $s_* \succeq_K 0$ and $t \succ_{K^*} 0$. Therefore, the pair
$(x_*, 0)$ is a globally optimal solution of problem \eqref{prob:GenCriticality}, which obviously implies that $x_*$ is
a globally optimal solution of problem \eqref{prob:AuxiliaryConvexProb} or, equivalently, $x_*$ is a critical point for
the problem $(\mathcal{P})$.

Suppose now that $x_*$ is a critical point for the problem $(\mathcal{P})$ satisfying optimality conditions
from Corollary~\ref{crlr:OptimalityCond_Lagrange} for some $\lambda_* \in K^*$ such that $t \succeq_{K^*} \lambda_*$.
Then one can easily verify that the pair $(x_*, 0)$ satisfies optimality conditions from the first part of this
proposition with $\mu_* = t - \lambda_*$, which implies that $x_*$ is a generalized $t$-critical point.  
\end{proof}

\begin{remark}
{(i)~From the proposition above it follows that if $x_*$ is a critical point, but the inequality $t \succeq_K \lambda_*$
is not satisfied for any corresponding Lagrange multiplier $\lambda_*$ (roughly speaking, the penalty parameter is
smaller then the norm of the Lagrange multiplier), then $x_*$ cannot be a generalized $t$-critical point. Indeed, if
$x_*$ is a generalized $t$-critical point, then from the proof of the second part of the proposition it follows that
$(x_*, 0)$ is a globally optimal solution of problem \eqref{prob:ConeConstrained_Infeasible}. Applying the KKT
optimality conditions to this problem, one gets that $t = \lambda_* + \mu_*$ for some $\mu_* \in K^*$ and some Lagrange
multiplier $\lambda_*$. Consequently, $t \succeq_{K^*} \lambda_*$, which is impossible.
}

\noindent{(ii)~With the use of the first part of the previous proposition one can readily verify that a (not necessarily
feasible) point $x_*$ is a generalized $t$-critical point for some $t \in \mathbb{R}^m$ with $t^{(i)} > 0$, 
$i \in I := \{ 1, \ldots, m \}$, of the smooth inequality constrained DC optimization problem
\[
  \min \: f_0(x) = g_0(x) - h_0(x) \quad \text{s.t.} \quad f_i(x) = g_i(x) - h_i(x) \le 0
\]
if and only if for the penalty function $\Phi_t(x) = f_0(x) + \sum_{i = 1}^m t^{(i)} \max\{ 0, f_i(x) \}$ one has
\begin{align*}
  0 \in \partial \Phi_t(x_*) = \nabla f_0(x_*) &+ \sum_{i \in I \colon f_i(x_*) > 0} t^{(i)} \nabla f_i(x_*) 
  \\
  &+ \sum_{i \in I \colon f_i(x_*) = 0} t^{(i)} \co\{ 0, \nabla f_i(x_*) \}
\end{align*}
or, equivalently, if and only if there exists $\lambda_* \in \mathbb{R}^m$ such that
\[
  \nabla f_0(x_*) + \sum_{i = 1}^m \lambda_*^{(i)} \nabla f_i(x_*) = 0, \quad
  t^{(i)} \ge \lambda_*^{(i)} \ge 0 \quad \forall i \in I,
\]
and for all $i \in I$ one has $\lambda_*^{(i)} = 0$ whenever $f_i(x_*) < 0$, while $t^{(i)} = \lambda_*^{(i)}$ whenever
$f_i(x_*) > 0$. With the use of this result one can show that the point $x_*$ is not a generalized $\mu t$-critical
point with $\mu > 1$, provided a suitable constraint qualification holds true at $x_*$. Thus, generalized
$t$-criticality depends on the choice of the penalty parameter $t$ and in many cases its increase or decrease might help
to escape a generalized $t$-critical point.
}

\noindent{(iii)~As was noted above, a generalized $t$-critical point $x_*$ is, in essence, a critical point of 
the penalty function $\Phi_t$, i.e. such point that $0 \in \partial \Phi_t(x_*)$, where $\partial \Phi_t(x)$ is the Dini
subdifferential of $\Phi_t$ at $x$. Various conditions ensuring that there are no infeasible critical points of a
penalty function were studied in detail in
\cite{DolgopolikExactPenalty,DolgopolikExactPenaltyII,DolgopolikExactPenaltyIII}.
}
\end{remark}

Before we proceed to convergence analysis, let us also establish an important property of a sequence generated by
Algorithmic Pattern~\ref{alg:CCP_penalty}, which, in particular, leads to a natural stopping criterion for this method.

\begin{lemma} \label{lem:PenalyFuncDecrease}
Let $\{ (x_n, s_n) \}$ be the sequence generated by Algorithmic Pattern~\ref{alg:CCP_penalty}. Then
\begin{equation} \label{eq:PenaltyFunctionDecay}
  f_0(x_{n + 1}) + \langle t_n, s_{n + 1} \rangle \le f_0(x_n) + \langle t_n, s_n \rangle,
  \quad \forall n \in \mathbb{N}.
\end{equation}
and this inequality is strict, if $x_n$ is not a generalized $t_n$-critical point.
\end{lemma}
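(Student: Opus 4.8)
The plan is to compare the Step~3 minimizer $(x_{n+1},s_{n+1})$ against the admissible point $(x_n,s_n)$ carried over from the previous iteration. Writing the objective of the Step~3 subproblem at iteration $n$ as
\[
  \omega_n(x,s) = g_0(x) - \langle v_n, x - x_n \rangle + \langle t_n, s \rangle,
\]
and using that $(x_{n+1},s_{n+1})$ is a global minimizer, the whole argument reduces to (i) showing that $(x_n,s_n)$ is feasible for this subproblem, and (ii) converting the resulting inequality $\omega_n(x_{n+1},s_{n+1}) \le \omega_n(x_n,s_n)$ into \eqref{eq:PenaltyFunctionDecay} by passing from $g_0$ to $f_0 = g_0 - h_0$ via the subgradient inequality.

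The step I expect to be the main obstacle is the feasibility of $(x_n,s_n)$, which requires propagating the constraint satisfied at the previous iteration through the linearization of $H$. The pair $(x_n,s_n)$ solves the iteration-$(n-1)$ subproblem, so $G(x_n) - H(x_{n-1}) - D H(x_{n-1})(x_n - x_{n-1}) \preceq_K s_n$ with $s_n \succeq_K 0$ and $x_n \in A$. Applying Lemma~\ref{lem:GenConvexityViaDerivative} to the $K$-convex function $H$ at $x_{n-1}$ gives $H(x_n) \succeq_K H(x_{n-1}) + D H(x_{n-1})(x_n - x_{n-1})$, whence
\[
  F(x_n) = G(x_n) - H(x_n) \preceq_K G(x_n) - H(x_{n-1}) - D H(x_{n-1})(x_n - x_{n-1}) \preceq_K s_n.
\]
Because the iteration-$n$ constraint evaluated at $(x_n,s_n)$ is exactly $F(x_n) \preceq_K s_n$ (the linearization term $D H(x_n)(x_n - x_n)$ vanishes), this shows that $(x_n,s_n)$ is feasible for the iteration-$n$ subproblem.

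With feasibility in hand I would then chain three bounds. Optimality of $(x_{n+1},s_{n+1})$ gives $\omega_n(x_{n+1},s_{n+1}) \le \omega_n(x_n,s_n) = g_0(x_n) + \langle t_n, s_n \rangle$; the subgradient inequality $h_0(x_{n+1}) \ge h_0(x_n) + \langle v_n, x_{n+1} - x_n \rangle$ for $v_n \in \partial h_0(x_n)$ gives $f_0(x_{n+1}) + \langle t_n, s_{n+1} \rangle \le \omega_n(x_{n+1},s_{n+1}) - h_0(x_n)$; and subtracting $h_0(x_n)$ throughout turns the right-hand side into $f_0(x_n) + \langle t_n, s_n \rangle$, which is \eqref{eq:PenaltyFunctionDecay}.

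For strictness I would argue by contraposition. If \eqref{eq:PenaltyFunctionDecay} holds with equality, then every inequality in the chain above must be tight, and in particular $\omega_n(x_{n+1},s_{n+1}) = \omega_n(x_n,s_n)$, so the feasible point $(x_n,s_n)$ is itself a global minimizer of the iteration-$n$ subproblem. That subproblem is precisely problem~\eqref{prob:GenCriticality} with $v_* = v_n \in \partial h_0(x_n)$ and $s_* = s_n \succeq_K 0$, hence $x_n$ is a generalized critical point for $t_n$. The contrapositive delivers the asserted strict inequality whenever $x_n$ is not a generalized critical point for $t_n$.
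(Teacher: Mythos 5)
Your proposal is correct and follows essentially the same route as the paper: feasibility of $(x_n,s_n)$ for the iteration-$n$ subproblem via Lemma~\ref{lem:GenConvexityViaDerivative}, then optimality of $(x_{n+1},s_{n+1})$ combined with the subgradient inequality for $h_0$. Your contrapositive treatment of strictness is logically identical to the paper's direct observation that non-criticality of $x_n$ forces the subproblem inequality to be strict.
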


\begin{proof}
By definition $(x_{n + 1}, s_{n + 1})$ is a globally optimal solution of the problem
\begin{equation} \label{prob:PenalizedSubproblem_Decay}
\begin{split}
  &\min_{(x, s)} \: g_0(x) - \langle v_n, x \rangle + \langle t_n, s \rangle
  \\
  &\text{s.t.} \quad 
  G(x) - H(x_n) - D H(x_n)(x - x_n) \preceq_K s, \quad s \succeq_K 0,  \quad x \in Q,
\end{split}
\end{equation}
while the pair $(x_n, s_n)$ satisfies the following conditions:
\[
  G(x_n) - H(x_{n - 1}) - D H(x_{n - 1})(x_n - x_{n - 1}) \preceq_K s_n, \quad
  s_n \succeq_k 0, \quad x_n \in Q.
\]
With the use of Lemma~\ref{lem:GenConvexityViaDerivative} one obtains that $G(x_n) - H(x_n) \preceq_K s_n$, which
implies that $(x_n, s_n)$ is a feasible point of problem \eqref{prob:PenalizedSubproblem_Decay}. Therefore
\begin{equation} \label{eq:LinearizedPenaltyDecay}
  g_0(x_{n + 1}) - \langle v_n, x_{n + 1} - x_n \rangle + \langle t_n, s_{n + 1} \rangle
  \le g_0(x_n) + \langle t_n, s_n \rangle \quad \forall n \in \mathbb{N}.
\end{equation}
Subtracting $h_0(x_n)$ from both sides of this inequality and applying the definition of subgradient, one obtains that
inequality \eqref{eq:PenaltyFunctionDecay} holds true. It remains to note that if $x_n$ is not a generalized
$t_n$-critical point, then by definition the inequality in \eqref{eq:LinearizedPenaltyDecay} is strict, which implies
that inequality \eqref{eq:PenaltyFunctionDecay} is also strict.  
\end{proof}

\begin{remark} \label{rmrk:PenaltyCCP_StoppinCriterion}
From the lemma above it follows that one can use the inequality
\begin{equation} \label{eq:BasicStoppingCriterion}
  \Big| f_0(x_{n + 1}) + \langle t_n, s_{n + 1} \rangle - f_0(x_n) - \langle t_n, s_n \rangle \Big| \le \varepsilon
\end{equation}
along with the inequality $\| s_{n + 1} \| \le \varepsilon$ on the infeasibility measure as a stopping criterion for
Algorithmic Pattern~\ref{alg:CCP_penalty}. Taking into account \eqref{eq:LinearizedPenaltyDecay} one can replace
inequality \eqref{eq:BasicStoppingCriterion} with the following one
\[
  \Big| g_0(x_{n + 1}) - \langle v_n, x_{n + 1} \rangle + \langle t_n, s_{n + 1} \rangle - 
  \Big( g_0(x_n) - \langle v_n, x_n \rangle  + \langle t_n, s_n \rangle \Big) \Big| \le \varepsilon
\]
to avoid the computation of $h_0(x_n)$ and $h_0(x_{n + 1})$ (cf. Remark~\ref{rmrk:CCP_StoppingCriterion}).
\end{remark}

Now we can provide sufficient conditions for the convergence of a sequence generated by Algorithmic
Pattern~\ref{alg:CCP_penalty}
to a feasible and critical point for the problem $(\mathcal{P})$, based on the a priori approach to convergence
analysis.

\begin{theorem}
Let the space $Y$ be finite dimensional, the cone $K$ be generating, and the penalty function 
$\Phi_c(x) = f_0(x) + c \dist(F(x), -K)$ be bounded below on $Q$ for 
$c = \min\{ \langle t_0, s \rangle \mid s \in K, \: \| s \| = 1 \} > 0$. Then all limits points of the sequence
$\{ x_n \}$ generated by Algorithmic Pattern~\ref{alg:CCP_penalty} are generalized $t_*$-critical points of the problem
$(\mathcal{P})$ with $t_* = \lim t_n$. 

Suppose, in addition, that all points from the set 
\[
  \Big\{ x \in Q \Bigm| \dist(F(x), -K) > \varkappa \Big\}
\]
are not generalized $\widehat{t}$-critical points with $\widehat{t} = \mu^p t_0$, where $p \in \mathbb{N}$ is the
largest natural number satisfying the inequality $\| \mu^p t_0 \| \le \tau_{\max}$. Then all limit points $x_*$ of the
sequence $\{ x_n \}$ satisfy the inequality $\dist(F(x_*), -K) \le \varkappa$. In particular, if $\varkappa = 0$, then
all limits points of the sequence $\{ x_n \}$ are feasible and critical for the problem $(\mathcal{P})$.
\end{theorem}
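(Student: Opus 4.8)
The plan is to handle the two assertions in turn, reducing both to the generalized-criticality subproblem \eqref{prob:GenCriticality}. The first thing I would record is that the penalty parameter stabilises: since $\mu>1$ and each increase requires $\mu\|t_n\|\le\tau_{\max}$, multiplication by $\mu$ can occur at most $p$ times, so there is $N_0$ with $t_n=t_*$ for all $n\ge N_0$, where $t_*=\mu^{k_*}t_0$ and $0\le k_*\le p$. (If the algorithm instead stops at $x_n$, then $(x_n,s_{n+1})$ solves \eqref{prob:GenCriticality} at $x_n$ for $t_n$ by construction, so $x_n$ is already generalized critical; hence I may assume an infinite sequence.) Writing $c=\min\{\langle t_0,s\rangle\mid s\in K,\ \|s\|=1\}>0$, one has $\langle t_*,s\rangle\ge c\|s\|$ for every $s\in K$ because $\mu^{k_*}\ge1$. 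As in the proof of Lemma~\ref{lem:PenalyFuncDecrease} one has $F(x_n)\preceq_K s_n$ and $s_n\succeq_K0$, whence $\dist(F(x_n),-K)\le\|s_n\|$; combined with Lemma~\ref{lem:PenalyFuncDecrease} this shows the sequence $\{f_0(x_n)+\langle t_*,s_n\rangle\}_{n\ge N_0}$ is nonincreasing and bounded below by $\Phi_c(x_n)\ge\inf_A\Phi_c>-\infty$, hence converges to some $L$. Along a subsequence $x_{n_k}\to x_*$ the bound $c\|s_{n_k}\|\le\langle t_*,s_{n_k}\rangle\le f_0(x_{N_0})+\langle t_*,s_{N_0}\rangle-f_0(x_{n_k})$ keeps $\{s_{n_k}\}$ bounded, so, refining, $s_{n_k}\to s_*\in K$ and $v_{n_k}\to v_*\in\partial h_0(x_*)$ by local boundedness and closedness of $\partial h_0$; continuity of $f_0=g_0-h_0$ gives $L=f_0(x_*)+\langle t_*,s_*\rangle$.

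The core of the first assertion is to verify that $(x_*,s_*)$ is globally optimal for \eqref{prob:GenCriticality} at $x_*$ with $t_*$. Feasibility follows by passing to the limit in $F(x_{n_k})\preceq_K s_{n_k}$, $s_{n_k}\succeq_K0$, using closedness of $K$ and continuity of $G,H$. For optimality I would fix an arbitrary feasible pair $(\overline x,\overline s)$ of \eqref{prob:GenCriticality} and perturb $\overline s$ into a point feasible for the Step~3 subproblem at $x_{n_k}$. With $q_k=G(\overline x)-H(x_{n_k})-DH(x_{n_k})(\overline x-x_{n_k})$ one has $q_k\to q_*:=G(\overline x)-H(x_*)-DH(x_*)(\overline x-x_*)\preceq_K\overline s$. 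Here I invoke the bounded decomposition property of a generating closed convex cone in finite dimension: there is $\gamma>0$ with every $y\in Y$ splitting as $y=a-b$, $a,b\in K$, $\|a\|,\|b\|\le\gamma\|y\|$ (the compact symmetric convex set $(K\cap B_Y)-(K\cap B_Y)$ spans $Y$, hence contains a ball). Decomposing $q_k-q_*=a_k-b_k$ with $a_k,b_k\to0$ and setting $\tilde s_k=\overline s+a_k$ yields $\tilde s_k\succeq_K0$, $\tilde s_k\succeq_K q_k$, and $\tilde s_k\to\overline s$, so $(\overline x,\tilde s_k)$ is feasible for the subproblem at $x_{n_k}$. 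Optimality of $(x_{n_k+1},s_{n_k+1})$ then gives, for $n_k\ge N_0$,
\begin{equation*}
  g_0(x_{n_k+1})-\langle v_{n_k},x_{n_k+1}-x_{n_k}\rangle+\langle t_*,s_{n_k+1}\rangle
  \le g_0(\overline x)-\langle v_{n_k},\overline x-x_{n_k}\rangle+\langle t_*,\tilde s_k\rangle .
\end{equation*}
On the left I bound below via $-\langle v_{n_k},x_{n_k+1}-x_{n_k}\rangle\ge h_0(x_{n_k})-h_0(x_{n_k+1})$, obtaining $f_0(x_{n_k+1})+\langle t_*,s_{n_k+1}\rangle+h_0(x_{n_k})$, whose $\liminf$ is $L+h_0(x_*)=g_0(x_*)+\langle t_*,s_*\rangle$; the right-hand side converges to $g_0(\overline x)-\langle v_*,\overline x-x_*\rangle+\langle t_*,\overline s\rangle$. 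Thus $(x_*,s_*)$ is optimal and $x_*$ is a generalized critical point for $t_*$. This limit passage — reconciling the moving linearisation and the slack variable through the cone decomposition — is the step I expect to be the main obstacle.

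For the second assertion I would argue by contradiction, supposing a limit point $x_*$ with $\dist(F(x_*),-K)>\varkappa$, and split on whether the penalty saturates. If $t_*=\mu^{k_*}t_0$ with $k_*<p$, then for $n\ge N_0$ the inequality $\mu\|t_n\|\le\mu^p\|t_0\|\le\tau_{\max}$ still holds, so non-increase of the penalty forces $\|s_{n+1}\|<\varkappa$ eventually; combined with $\dist(F(x_{n_k}),-K)\le\|s_{n_k}\|$ and continuity of $x\mapsto\dist(F(x),-K)$, this yields $\dist(F(x_*),-K)\le\varkappa$, a contradiction. Otherwise $t_*=\widehat t=\mu^p t_0$, and by the first part $x_*$ is a generalized critical point for $\widehat t$, contradicting the added hypothesis that no point with $\dist(F(\cdot),-K)>\varkappa$ is such. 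Hence every limit point satisfies $\dist(F(x_*),-K)\le\varkappa$.

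Finally, when $\varkappa=0$ the bound $\dist(F(x_*),-K)=0$ together with closedness of $-K$ gives $F(x_*)\in-K$, so $x_*$ is feasible for $(\mathcal P)$; then the second statement of Proposition~\ref{prp:GeneralizedCriticality} upgrades generalized criticality for $t_*$ to criticality for $(\mathcal P)$, completing the proof.
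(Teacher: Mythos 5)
Your proof is correct, and while its overall skeleton (penalty stabilization, boundedness of the slacks, the two-case analysis for the second assertion, and the use of Proposition~\ref{prp:GeneralizedCriticality} when $\varkappa=0$) matches the paper's, the core step is handled by a genuinely different mechanism. The paper argues by contradiction: assuming $(x_*,s_*)$ is not optimal for \eqref{prob:GenCriticality}, it invokes Corollary~\ref{crlr:PerturbedSolution} --- i.e.\ Robinson's metric regularity theory for convex multifunctions, applied to $\Phi(x,s)=(G(x)-s,-s)$, $\Psi(x,s)=(H(x),0)$, $\mathcal{K}=K\times K$ --- to transport a strictly better feasible point of the limiting problem into feasible points of the subproblems at $x_{n_k}$, producing a uniform descent of $\theta/2$ infinitely often and contradicting the lower boundedness of $\Phi_c$ via Lemma~\ref{lem:PenalyFuncDecrease}. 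You instead avoid Robinson's machinery entirely: you first extract the limit $L$ of the monotone, bounded-below sequence $\{f_0(x_n)+\langle t_*,s_n\rangle\}$, and then, for an \emph{arbitrary} feasible pair $(\overline{x},\overline{s})$ of \eqref{prob:GenCriticality}, you perturb only the slack variable, $\tilde s_k=\overline{s}+a_k$, where $q_k-q_*=a_k-b_k$ is a norm-controlled decomposition in $K-K$; the generating property of the closed cone $K$ in finite dimensions yields the constant $\gamma$ (your absorbing-set argument is the standard and correct one), and feasibility of $(\overline{x},\tilde s_k)$ for the Step~3 subproblem follows since $\tilde s_k-q_k=(\overline{s}-q_*)+b_k\in K$. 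Passing to the limit in the optimality inequality then gives global optimality of $(x_*,s_*)$ directly, with no contradiction. What each approach buys: the paper's route reuses a tool (Corollary~\ref{crlr:PerturbedSolution}) that is needed anyway for Theorem~\ref{thrm:CCP_Convergence}, where the point $x$ itself must be perturbed and no slack is available, so the heavier machinery is unavoidable there; your route exploits the specific structure of the penalized subproblem --- the slack direction is ``free'', so only $s$ needs perturbing --- which makes the proof of this particular theorem self-contained, more elementary, and constructive (it identifies the limit value $L=f_0(x_*)+\langle t_*,s_*\rangle$ and proves optimality against every competitor rather than refuting a single one). Two minor remarks: your treatment of finite termination (the stopping case $x_{n+1}=x_n$) is a welcome addition the paper leaves implicit; and in the display bounding the left-hand side, the $\liminf$ is in fact a limit, though the inequality direction you use is the right one.
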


\begin{proof}
For the sake of convenience we divide the proof of the theorem into several parts.

\textbf{Proof of the second statement.} Suppose that the first part of the theorem holds true, i.e. all limit points of
the sequence $\{ x_n \}$ are generalized $t_*$-critical points with $t_* = \lim t_n$ (note that this limit exists, since
according to Step~3 of Algorithmic Pattern~\ref{alg:CCP_penalty} the penalty parameter can be updated only a finite
number of times). Let us show that the second part of the theorem holds true.

Indeed, let $x_*$ be a limit point of the sequence $\{ x_n \}$, that is, there exists a subsequence $\{ x_{n_k} \}$
converging to $x_*$. Let us consider two cases. Suppose at first that the norm of the penalty parameter $t_n$ does not
reach the upper bound $\tau_{\max}$ (see Step~3 of Algorithmic Pattern~\ref{alg:CCP_penalty}), that is, the penalty
parameter is updated less than $p$ times. Then according to the penalty updating rule on Step~3 of Algorithmic
Pattern~\ref{alg:CCP_penalty} there exists $n_0 \in \mathbb{N}$ such that $\| s_n \| < \varkappa$ for all $n \ge n_0$.
By definition
\[
  G(x_n) - H(x_{n - 1}) - D H(x_{n - 1})(x_n - x_{n - 1}) \preceq_K s_n \quad \forall n \in \mathbb{N},
\]
which thanks to Lemma~\ref{lem:GenConvexityViaDerivative} implies that $F(x_n) \preceq_K s_n$ or, equivalently, one has
$F(x_n) - s_n \in - K$. Therefore $\dist(F(x_n), -K) \le \| s_n \| < \varkappa$ for all $n \ge n_0$. Consequently, 
passing to the limit in the inequality $\dist(F(x_{n_k}), - K) < \varkappa$ with the use of the fact that both $G$ and
$H$ are continuous, one obtains that $\dist(F(x_*), - K) \le \varkappa$.

Suppose now that the norm of $t_n$ reaches the upper bound $\tau_{\max}$ after a finite number of iterations. Then
according to Step~3 of Algorithmic Pattern~\ref{alg:CCP_penalty} there exists $n_0 \in \mathbb{N}$ such that 
$t_n = \mu^p t_0$ for all $n \ge n_0$. By our assumption $x_*$ is a generalized $t_*$-critical point with 
$t_* = \widehat{t} = \mu^p t_0$, which implies that it cannot belong to the set 
$\{ x \in Q \colon \dist(F(x), -K) > \varkappa \}$ by the assumption of the theorem. Therefore, 
$\dist(F(x_*), -K) \le \varkappa$.

Finally, if $\varkappa = 0$, then $\dist(F(x_*), -K) = 0$, that is, $F(x_*) \in - K$, since the cone $K$ is closed.
Consequently, the point $x_*$ is feasible for the problem $(\mathcal{P})$. Hence by the second part of
Proposition~\ref{prp:GeneralizedCriticality} the point $x_*$ is also a critical for 
the problem $(\mathcal{P})$.

Thus, it remains to prove that all limit points of the sequence $\{ x_n \}$ are generalized $t_*$-critical points of
the problem $(\mathcal{P})$. 

\textbf{Proof of the first statement.}~Let a subsequence $\{ x_{n_k} \}$ converge to some point $x_*$. Then the
corresponding sequence $\{ v_{n_k} \}$ of subgradients  of the function $h_0$ is bounded due to the local boundedness of
the subdifferential mapping \cite[Cor.~24.5.1]{Rockafellar}. Therefore, replacing, if necessary, the sequence 
$\{ x_{n_k} \}$ with its subsequence, one can suppose that the sequence $\{ v_{n_k} \}$ converges to some vector $v_*$
belonging to $\partial h_0(x_*)$ by virtue of the fact that the graph of the subdifferential is closed
\cite[Thm.~24.4]{Rockafellar}.

\textbf{Step~1.}~Let us show that the sequence $\{ s_{n_k} \} \subset K$ is bounded. Then taking into account the facts
that the space $Y$ is finite dimensional and the cone $K$ is closed, and replacing, if necessary, the sequence 
$\{ x_{n_k} \}$ with its subsequence, one can suppose that $\{ s_{n_k} \}$ converges to some $s_* \in K$. 

Indeed, since the penalty parameter $t_n$ can be updated only a finite number of times, there exists 
$n_0 \in \mathbb{N}$ such that $t_n = t_{n_0}$ for all $n \ge n_0$. Consequently, by Lemma~\ref{lem:PenalyFuncDecrease}
the sequence $\{ f_0(x_n) + \langle t_{n_0}, s_n \rangle \}_{n \ge n_0}$ is non-increasing and, in particular, bounded
above. Therefore the sequence $\{ f_0(x_{n_k}) + \langle t_0, s_{n_k} \rangle \}$ is bounded above as well.

By contradiction, suppose that the sequence $\{ s_{n_k} \}$ is unbounded. Then taking into account the fact that 
the sequence $\{ f_0(x_{n_k}) \}$ is bounded below, since the sequence $\{ x_{n_k} \}$ converges, and applying 
the inequality
\[
  f_0(x_{n_k}) + \langle t_0, s_{n_k} \rangle \ge f_0(x_{n_k}) + c \| s_{n_k} \|,
\]
one gets that $\limsup_{k \to \infty} (f_0(x_{n_k}) + \langle t_0, s_{n_k} \rangle) = + \infty$, which is impossible.
Thus, without loss of generality one can suppose that the sequence $\{ s_{n_k} \}$ converges to some $s_*$. Note that
from the definition of $(x_n, s_n)$ and Lemma~\ref{lem:GenConvexityViaDerivative} it follows that 
$F(x_n) \preceq_K s_n$. Therefore $F(x_*) \preceq_K s_*$, thanks to the fact that the cone $K$ is closed. 

\textbf{Step~2.}~Now we can turn to the proof of the fact that the point $x_*$ is a generalized $t_*$-critical point.
By contradiction, suppose that this statement is false. Then, in particular, the point $(x_*, s_*)$ is not a globally
optimal solution of problem \eqref{prob:GenCriticality} (see Def.~\ref{def:GenCriticalPoint}). Therefore there exist a
feasible point $(\overline{x}, \overline{s})$ of this problem and $\theta > 0$ such that
\[
  g_0(\overline{x}) - \langle v_*, \overline{x} - x_* \rangle + \langle t_*, \overline{s} \rangle
  < g_0(x_*) + \langle t_*, s_* \rangle - \theta.
\]
Applying Lemma~\ref{lem:PerturbedSolution} with $X = \mathbb{R}^d \times Y$ and
\[
  \Phi(x, s) = \begin{pmatrix} G(x) - s \\ - s \end{pmatrix}, \quad
  \Psi(x, s) = \begin{pmatrix} H(x) \\ 0 \end{pmatrix}, \quad
  W = Q \times Y, \quad 
  E = K \times K,
\]
(it is easy to see that condition \eqref{eq:RegularPointConeConstr} holds true in the case) one obtains that for any 
$z = (x, s) \in Q \times Y$ lying in a neighbourhood of $(x_*, s_*)$ one can find $(\xi(z), \zeta(z)) \in Q \times K$
such that 
\[
  G(\xi(z)) - H(x) - D H(x)(\xi(z) - x) \preceq_K \zeta(z)
\]
and $(\xi(z), \zeta(z)) \to (\overline{x}, \overline{s})$ as $z \to (x_*, s_*)$. Consequently, there exists 
$k_0 \in \mathbb{N}$ such that for any $k \ge k_0$ the point $(\xi(z_{n_k}), \zeta(z_{n_k}))$ with 
$z_{n_k} = (x_{n_k}, s_{n_k})$ is feasible for the problem
\begin{align*}
  &\min_{(x, s)} \: g_0(x) - \langle v_{n_k}, x \rangle + \langle t_{n_k}, s \rangle 
  \\
  &\text{s.t.} \quad G(x) - H(x_{n_k}) - D H(x_{n_k})(x - x_{n_k}) \preceq_K s, \quad s \succeq_K 0, \quad x \in Q.
\end{align*}
(note that one can suppose that $t_{n_k} = t_*$, since the penalty parameter is updated only a finite number of times)
and
\[
  g_0(\xi(z_{n_k})) - \langle v_{n_k}, \xi(z_{n_k}) - x_{n_k} \rangle + 
  \langle t_*, \zeta(z_{n_k}) \rangle
  < g_0(x_{n_k}) + \langle t_*, s_{n_k} \rangle - \frac{\theta}{2}.
\]
Therefore by the definition of $(x_n, s_n)$ for any $k \ge k_0$ one has
\[
  g_0(x_{n_k + 1}) - \langle v_{n_k}, x_{n_k + 1} - x_{n_k} \rangle + \langle t_*, s_{n_k + 1} \rangle 
  < g_0(x_{n_k}) + \langle t_*, s_{n_k} \rangle - \frac{\theta}{2}.
\]
Subtracting $h_0(x_{n_k})$ from both sides of this inequality and applying the definition of subgradient, one obtains
that
\[
  f_0(x_{n_k + 1}) + \langle t_*, s_{n_k + 1} \rangle < f_0(x_{n_k}) + \langle t_*, s_{n_k} \rangle - \frac{\theta}{2}
  \quad \forall k \ge k_0,
\]
which with the use of Lemma~\ref{lem:PenalyFuncDecrease} implies that 
$f_0(x_n) + \langle t_*, s_n \rangle \to - \infty$ as $n \to \infty$ (recall that $t_* = t_n$ for any sufficiently
large $n$, since the penalty parameter can be updated only a finite number of times). On the other hand, as was shown
above (see the proof of Lemma~\ref{lem:PenalizedProblem_WellPosedness}), one has
\[
  f_0(x_n) + \langle t_*, s_n \rangle \ge f_0(x_n) + \langle t_0, s_n \rangle
  \ge f_0(x_n) + c \dist(F(x_n), -K) =: \Phi_c(x_n).
\]
Consequently, $\Phi_c(x_n) \to - \infty$, which contradicts the fact that by our assumption this function is bounded
below on $Q$. Therefore one can conclude that $x_*$ is a generalized $t_*$-critical point.  
\end{proof}

\begin{remark}
In the previous theorem it is sufficient to suppose that the penalty function $\Phi_c$ is bounded below on
$Q$ for $c = \inf\{ \langle t_*, s \rangle \mid s \in K, \: \| s \| = 1 \}$, which is, in the general case, greater
than $c$ from the formulation of the theorem. However, such assumption is inconsistent with the a priori approach, since
it is based on the information about the behaviour of the sequence $\{ t_n \}$, which is not known in advance.
\end{remark}

Finally, let us consider the a posteriori approach to convergence analysis, which allows one to obtain sufficient
conditions for the convergence of Algorithmic Pattern~\ref{alg:CCP_penalty} to a critical point for the problem
$(\mathcal{P})$.

\begin{theorem}
Let $K$ be finite dimensional and there exist $c \ge 0$ such that the penalty function 
$\Phi_c(\cdot) = f_0(\cdot) + c \dist(F(\cdot), - K)$ is coercive on $Q$. Suppose also that the sequence 
$\{ x_n \}$ generated by Algorithmic Pattern~\ref{alg:CCP_penalty} with $\varkappa = 0$ and $\tau_{\max} = + \infty$
converges to a point $x_*$ satisfying the following constraint qualification:
\begin{equation} \label{eq:SlaterCondition_Penalty_LimPoint}
  0 \in \interior\big\{ G(x) - H(x_*) - D H(x_*)(x - x_*) + K \bigm| x \in Q \big\}
\end{equation}
(i.e. $x_* \in \mathscr{D}_s$). Then the sequence $\{ t_n \}$ is bounded, there exists $m \in \mathbb{N}$ such that
for all $n \ge m$ the point $x_n$ is feasible for the problem $(\mathcal{P})$, and the point $x_*$ is
feasible and critical for the problem $(\mathcal{P})$.
\end{theorem}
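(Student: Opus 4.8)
The plan is to reduce the behaviour of Algorithm~\ref{alg:CCP_penalty} in the exact-penalty regime to that of the non-penalized Algorithm~\ref{alg:CCP} and then invoke the convergence theory already established for the latter. First I would exploit the hypothesis $x_n \to x_*$ together with $x_* \in \mathscr{D}_s$ and the openness of $\mathscr{D}_s$ in $A$: choose $r > 0$ so that the compact set $\mathscr{D}_0 = B(x_*, r) \cap A$ is contained in $\mathscr{D}_s$, and pick $m_0 \in \mathbb{N}$ with $x_n \in \mathscr{D}_0$ for all $n \ge m_0$. By Corollary~\ref{crlr:ExactPenaly_SlaterCond}, whose hypotheses (finite dimensionality of $K$, continuity of $G$, continuous differentiability of $H$, and coercivity of $\Phi_c$) are exactly those assumed here, there is a threshold $\mu_* \ge 0$ such that the penalized subproblem \eqref{prob:PenalizedExact} is exact for every $z \in \mathscr{D}_0$ once the penalty exceeds $\mu_*$; equivalently, there is $\tau_* > 0$ as in Remark~\ref{rmrk:ConvergenceViaExactPenalization}.

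Next I would establish boundedness of $\{t_n\}$. Since $\tau_{\max} = +\infty$, each penalty update multiplies $t_n$ by $\mu > 1$, so were the penalty updated infinitely often its norm would eventually exceed $\tau_*$. Fix the first index $k \ge m_0$ with $\|t_k\| \ge \tau_*$. Because $x_k \in \mathscr{D}_0$ and the penalty is above threshold, the exactness asserted by Corollary~\ref{crlr:ExactPenaly_SlaterCond} forces the slack component of the subproblem solution to vanish, $s_{k+1} = 0$; hence $x_{k+1}$ solves the non-penalized problem \eqref{prob:NonPenalizedExact} and, by the nesting of feasible regions used throughout (cf.\ \eqref{eq:VectorConvexity_DerivTest} and Lemma~\ref{lem:GenConvexityViaDerivative}), is feasible for $(\mathcal{P})$. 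Inductively, for every $n \ge k$ the iterate stays in $\mathscr{D}_0$ and the penalty stays above threshold, so $s_{n+1} = 0$ and $\dist(F(x_{n+1}), -K) = 0 \le \varkappa$; consequently the infeasibility condition in Step~4 no longer triggers an increase and $t_n$ stabilises at some $t_*$. Thus $\{t_n\}$ is bounded and there is $m \ge k$ such that $x_n$ is feasible for $(\mathcal{P})$ for all $n \ge m$.

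Finally I would identify the tail of $\{x_n\}$ with a convex-concave sequence and conclude. Once the penalty is frozen at $t_*$ and $s_{n+1} = 0$ for $n \ge m$, the penalized subproblem on Step~3 reduces to the non-penalized subproblem, i.e.\ to the Step~3 subproblem of Algorithm~\ref{alg:CCP}; by Remark~\ref{rmrk:ConvergenceViaExactPenalization} the sequence $\{x_n\}_{n \ge m}$ coincides with a CCP sequence started at $x_m$. Since $x_n \to x_*$ and $x_*$ satisfies the Slater-type constraint qualification \eqref{eq:SlaterCondition_Penalty_LimPoint}, statement~\ref{st:convergence} of Theorem~\ref{thrm:CCP_Convergence} yields that $x_*$ is a critical point of $(\mathcal{P})$; feasibility of $x_*$ follows by passing to the limit in $F(x_n) \in -K$ using continuity of $F$ and closedness of $-K$.

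The step I expect to be the main obstacle is the boundedness of $\{t_n\}$, and more precisely the transfer of the \emph{static} exactness of Corollary~\ref{crlr:ExactPenaly_SlaterCond} (valid for a fixed $z \in \mathscr{D}_0$ and a penalty above a uniform threshold) to the \emph{dynamic} sequence produced by the algorithm: one must ensure that the threshold $\mu_*$ is genuinely uniform over the compact set $\mathscr{D}_0$, that the iterates do not leave $\mathscr{D}_0$ before the penalty crosses $\tau_*$, and that the attainment of exact feasibility $s_{n+1} = 0$ halts the penalty update rather than leaving the parameter to grow without bound. The remaining assertions are then routine given Remark~\ref{rmrk:ConvergenceViaExactPenalization} and Theorem~\ref{thrm:CCP_Convergence}.
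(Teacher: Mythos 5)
Your argument reproduces the paper's treatment of what is, in the paper's own proof, only one of two cases. The shared core is identical: localize around $x_*$ (via the openness of $\mathscr{D}_s$ and Corollary~\ref{crlr:ExactPenaly_SlaterCond}), extract a uniform exactness threshold $\tau_* = \mu_*\|t_0\|$, observe that once $\|t_n\| \ge \tau_*$ and $x_n \in B(x_*, r) \cap A$ every subproblem solution has zero slack, so the penalty freezes, the tail is feasible and coincides with an Algorithm~\ref{alg:CCP} sequence, and statement~\ref{st:convergence} of Theorem~\ref{thrm:CCP_Convergence} yields criticality. The gap is that everything after ``Fix the first index $k \ge m_0$ with $\|t_k\| \ge \tau_*$'' presupposes that such an index exists, i.e.\ that the penalty parameter eventually crosses the exactness threshold. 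Nothing forces this: with $\varkappa = 0$ the parameter is increased only while the slacks are nonzero, so if the subproblem solutions happen to have zero slack from some early iteration on (which is entirely possible, e.g.\ when $\|t_0\|$ already dominates the relevant Lagrange multipliers along the sequence), then $\{t_n\}$ stabilizes at a value whose norm may be strictly smaller than $\tau_*$. In that regime Corollary~\ref{crlr:ExactPenaly_SlaterCond} gives you nothing --- you cannot conclude $s_{n+1} = 0$ from exactness --- and your proof as written establishes boundedness of $\{t_n\}$ (which is trivial there) but neither the eventual feasibility of the iterates nor the criticality of $x_*$.

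This missing case is exactly the second half of the paper's proof. Feasibility is recovered there from the update rule itself: if $t_n$ is updated only finitely many times, then with $\varkappa = 0$ Step~4 forces $s_n = 0$ for all $n$ beyond the last update, whence the tail is feasible by Lemma~\ref{lem:GenConvexityViaDerivative} and $x_*$ is feasible by closedness of the feasible region. For criticality the paper does not reduce to Algorithm~\ref{alg:CCP} in this case (although one could: a zero-slack solution of the penalized subproblem solves the non-penalized subproblem, by comparing objective values over pairs $(x, 0)$ --- a one-line argument, but it is \emph{not} an application of exactness); instead it argues directly, in the spirit of the proof of Theorem~\ref{thrm:CCP_Convergence}: extract $v_{n_k} \to v_* \in \partial h_0(x_*)$, suppose $x_*$ is not critical, construct feasible comparison points $\xi(x_{n_k})$ for the penalized subproblems via Corollary~\ref{crlr:PerturbedSolution}, and combine the resulting uniform decrease $\theta/2$ with the monotonicity of $n \mapsto f_0(x_n) + \langle t_{n_0}, s_n \rangle$ from Lemma~\ref{lem:PenalyFuncDecrease} to force $f_0(x_n) + \langle t_{n_0}, s_n \rangle \to -\infty$, contradicting $x_n \to x_*$. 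Either repair (the one-line transfer argument plus reduction to Algorithm~\ref{alg:CCP}, or the paper's direct argument) completes your proof; without one of them the sub-threshold case, and hence the theorem, is not covered.
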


\begin{proof}
By our assumption $x_* \in \mathscr{D}_s$. Therefore, as was shown in the proof of
Corollary~\ref{crlr:ExactPenaly_SlaterCond}, there exist $r > 0$ and $\mu_* \ge 0$ such that for all $\mu \ge \mu_*$ and
for any $z \in B(x_*, r) \cap Q$ and $v \in \partial h_0(z)$ the penalized problem \eqref{prob:Penalized} is exact.
Define $\tau_* = \mu_* \| t_0 \|$. 

\textbf{Step~1.}~If the penalty parameter $t_n$ is updated only a finite number of times, then the sequence $\{ t_n \}$
is obviously bounded. Moreover, according to Step~3 of Algorithmic Pattern~\ref{alg:CCP_penalty} in this case there
exists 
$m \in \mathbb{N}$ such that $s_n = 0$ for all $n \ge m$, which implies that the sequence $\{ x_n \}_{n \ge m}$ is
feasible for the problem $(\mathcal{P})$. Therefore the point $x_*$ is also feasible for this problem, due to the fact
that under our assumptions the feasible region of the problem $(\mathcal{P})$ is closed.

On the other hand, if the penalty parameter $t_n$ is updated an infinite number of times, then according to Step~3 of
Algorithmic Pattern~\ref{alg:CCP_penalty} there exists $m \in \mathbb{N}$ such that $\| t_n \| \ge \tau_*$ for all 
$n \ge m$. Moreover, increasing $m$, if necessary, one can suppose that $x_n \in B(x_*, r)$ for all $n \ge m$.
Consequently, the penalized subproblem on Step~2 of Algorithmic Pattern~\ref{alg:CCP_penalty} is exact for all $n \ge m$
by
Corollary~\ref{crlr:ExactPenaly_SlaterCond}. Hence by the definition of exactness $s_n = 0$ for all $n \ge m + 1$, which
contradicts our assumption that $t_n$ is updated an infinite number of times.

Thus, the sequence $\{ t_n \}$ is bounded and the point $x_*$ is feasible for the problem $(\mathcal{P})$. It remains to
verify that $x_*$ is a critical point. 

\textbf{Step~2.}~Suppose at first that there exists $m \in \mathbb{N}$ such that $\| t_m \| \ge \tau_*$. Then 
$\| t_n \| \ge \tau_*$ for all $n \ge m$. Increasing $m$, if necessary, one can suppose that $x_n \in B(x_*, r)$ for all
$n \ge m$. Therefore by the definitions of exactness of the penalized problem and Algorithmic Patterns~\ref{alg:CCP} and
\ref{alg:CCP_penalty} the sequence $\{ x_n \}_{n \ge m + 1}$ is feasible for the problem $(\mathcal{P})$ and coincides
with the sequence generated by Algorithmic Pattern~\ref{alg:CCP} with starting point $x_{m + 1}$. Therefore, by
Theorem~\ref{thrm:CCP_Convergence} the point $x_*$ is critical for the problem $(\mathcal{P})$.

\textbf{Step~3.}~Suppose now that $\| t_n \| < \tau_*$ for all $n \in \mathbb{N}$. Then there exists 
$n_0 \in \mathbb{N}$ such that $t_n = t_{n_0}$ for all $n \ge n_0$. Since the sequence $\{ x_n \}$ generated by
Algorithmic Pattern~\ref{alg:CCP_penalty} converges to $x_*$, the corresponding sequence $\{ v_n \}$ of subgradients of
the function $h_0$ is bounded, thanks to the local boundedness of the subdifferential mapping
\cite[Cor.~24.5.1]{Rockafellar}. Consequently, there exists a subsequence $\{ v_{n_k} \}$ converging to some vector
$v_*$, which belongs to $\partial h_0(x_*)$ due to closedness of the graph of the subdifferential
\cite[Thm.~24.4]{Rockafellar}.

By contradiction, suppose that $x_*$ is not a critical point for the problem $(\mathcal{P})$. As was noted several times
above, it implies that $x_*$ is not a globally optimal solution of the problem
\begin{align*}
  &\minimize \enspace g_0(x) - \langle v_*, x \rangle
  \\
  &\text{subject to} \enspace G(x) - H(x_*) - D H(x_*)(x - x_*) \preceq_K 0, \quad x \in Q.
\end{align*}
Thus, there exist $\theta > 0$ and a feasible point $\overline{x}$ of this problem satisfying the inequality
$g_0(\overline{x}) - \langle v_*, \overline{x} - x_* \rangle < g_0(x_*) - \theta$. 

Applying Lemma~\ref{lem:PerturbedSolution} with $\Phi = G$, $\Psi = H$, $W = Q$, and $E = K$,
one obtains that for any $z \in Q$ lying in a neighbourhood of $x_*$ one can find $\xi(z) \in Q$ such that
$G(\xi(z)) - H(z) - D H(z)(\xi(z) - z) \preceq_K 0$ and $\xi(z) \to \overline{x}$ as $z \to x_*$. Hence bearing in mind
the facts that $x_{n_k} \to x_*$ and $v_{n_k} \to v_*$ as $k \to \infty$, one obtains that there exists 
$k_0 \in \mathbb{N}$ such that
\[
  g_0(\xi(x_{n_k})) - \langle v_{n_k}, \xi(x_{n_k}) - x_{n_k} \rangle 
  < g_0(x_{n_k}) - \frac{\theta}{2}	\quad \forall k \ge k_0.
\]
Clearly, one can suppose that $n_{k_0} \ge n_0$.

Recall that by definition $(x_{n_k + 1}, s_{n_k + 1})$ is a globally optimal solution of the penalized problem
\begin{align*}
  &\min_{(x, s)} \: g_0(x) - \langle v_{n_k}, x - x_{n_k} \rangle + \langle t_{n_k}, s \rangle
  \\
  &\text{s.t.} \quad 
  G(x) - H(x_{n_k}) - D H(x_{n_k})(x - x_{n_k}) \preceq_K s, \quad s \succeq_K 0,  \quad x \in Q.
\end{align*}
By definition the point $(\xi(x_{n_k}), 0)$ is feasible for this problem, which implies that
\begin{align*}
  g_0(x_{n_k + 1}) &- \langle v_{n_k}, x_{n_k + 1} - x_{n_k} \rangle + \langle t_{n_k}, s_{n_k + 1} \rangle
  \\
  &\le g_0(\xi(x_{n_k})) - \langle v_{n_k}, \xi(x_{n_k}) - x_{n_k} \rangle
  < g_0(x_{n_k}) - \frac{\theta}{2}
\end{align*}
for all $k \ge k_0$. Subtracting $h_0(x_{n_k})$ from both sides of this inequality and applying the definition of
subgradient and the fact that $t_n = t_{n_0}$ for all $n \ge n_0$, one obtains that
\[
  f_0(x_{n_k + 1}) + \langle t_{n_0}, s_{n_k + 1} \rangle < f_0(x_{n_k}) - \frac{\theta}{2}
  \le f_0(x_{n_k}) + \langle t_{n_0}, s_{n_k} \rangle - \frac{\theta}{2}
\]
for all $k \ge k_0$ (here we used the facts that by definition $s_{n_k} \in K$ and $\langle t_n, s \rangle \ge 0$ for
all $s \in K$ and $n \in \mathbb{N}$). By Lemma~\ref{lem:PenalyFuncDecrease} one has
\[
  f_0(x_{n + 1}) + \langle t_{n_0}, s_{n + 1} \rangle
  \le f_0(x_n) + \langle t_{n_0}, s_n \rangle \quad \forall n \ge n_0.
\]
Consequently, $f_0(x_n) + \langle t_{n_0}, s_n \rangle \to - \infty$ as $n \to \infty$, which contradicts
the facts that $x_n \to x_*$ as $n \to \infty$ and $f_0(x_n) + \langle t_{n_0}, s_n \rangle \ge f_0(x_n)$ 
for all $n \in \mathbb{N}$. Therefore, $x_*$ is a critical point, and the proof is complete.  
\end{proof}

Thus, one can conclude that if a sequence $\{ x_n \}$ generated by either Algorithmic Pattern~\ref{alg:CCP} or
Algorithmic Pattern~\ref{alg:CCP_penalty} converges to a point $x_*$ such that Slater's condition holds true for the
corresponding linearized convex problem, then under some natural assumptions the point $x_*$ is critical for 
the problem $(\mathcal{P})$.

\section{Numerical experiments}
\label{sect:NumerExperiments}

Let us present some results of numerical experiments for Algorithmic Pattern~\ref{alg:CCP_penalty}. We applied it to the
problem of computing compressed modes for variational problems \cite{OzolinsLaiCaflischOsher} and the sphere packing
problem on Grassmannian \cite{AbsilHosseini,DirrHelmkeLageman,ConwayHardinSloane}. Local search methods for the first
problem were considered in \cite{OzolinsLaiCaflischOsher,ChenJiYou,ChenMaSoZhang}, while local search methods for
the second problem were studied in \cite{DirrHelmkeLageman,GrohsHosseini,GrohsHosseini2,ConwayHardinSloane}. For an
interesting application of Algorithmic Pattern~\ref{alg:CCP_penalty} to multi-matrix principal component analysis see
\cite[Sect.~5.4]{LippBoyd}.

Algorithmic Pattern~\ref{alg:CCP_penalty} was implemented in \textsc{Matlab} on a 3.7 GHz Intel(R) Core(TM) i3 machine
with 16 GB of RAM. The parameters of the algorithmic pattern were chosen as follows. The penalty parameter 
$t_0 \succ_{K^*} 0$ was always chosen as the identity matrix $I_k$ of appropriate dimension $k \in \mathbb{N}$, which
means that for any $n \in \mathbb{N}$ one has $t_n = \tau_n I_k$ for $\tau_n = \mu^l$ and some $l \in \mathbb{N}$. We
also set $\tau_{\max} = 10^6 \| t_0 \|$, $\mu = 10$, and $\varkappa = 10^{-4}$, and used the first stopping criterion
from Remark~\ref{rmrk:PenaltyCCP_StoppinCriterion} with $\varepsilon = 10^{-3}$. Finally, we terminated the algorithm,
if the number of iterations exceeded $100$.

To numerically verify the observations made in Example~\ref{ex:AnalyticalComputDCA}, the rule for updating the penalty
parameter on Step~3 of Algorithmic Pattern~\ref{alg:CCP_penalty} was modified as follows:
\begin{equation} \label{eq:ModifiedPenaltyUpdate}
  t_{n + 1} = \begin{cases}
    \mu t_n, & \text{if } n \ge n_{\min} \text{ and } \| s_{n + 1} \| \ge \varkappa \text{ and } 
    \mu \| t_n \| \le \tau_{\max},
    \\
    t_n, & \text{otherwise},
  \end{cases}
\end{equation}
Here $n_{\min} \in \mathbb{N}$ is a parameter that defines the number of iterations, after which the algorithm starts
updating the penalty parameter. We tested 3 different values $n_{\min} = 0$, $n_{\min} = 3$, and $n_{\min} = 10$, to
determine how the value of $n_{\min}$ affects the overall performance of the method. It should be noted that the 
convergence analysis presented in the previous sections is applicable to Algorithmic Pattern~\ref{alg:CCP_penalty}
with the penalty updating rule \eqref{eq:ModifiedPenaltyUpdate}, since in this case the sequence 
$\{ x_n \}_{n \ge n_{\min}}$ coincides with the sequence generated by the original version of Algorithmic
Pattern~\ref{alg:CCP_penalty} with $x_{n_{\min}}$ chosen as the starting point.

Finally, the convex subproblems on Step~2 of the method were solved with the use of \texttt{cvx}, a {\sc Matlab} package
for specifying and solving convex programs \cite{CVXPackage,GrantBoyd_CVX}. This package was used as the inferface to
the SDPT3 solver \cite{TohToddTutuncu,TutuncuTohTodd}, a \textsc{Matlab} software package for semidefinite programming
based on infeasible path-following methods (a class of interior point methods). An attempt was also made to solve the
convex subproblems on Step~2 with the use of PENLAB \cite{Penlab}, an open source \textsc{Matlab} package for solving
nonlinear semidefinite optimization problems based on an augmented Lagrangian method from \cite{Stingl}. However, 
compared to \texttt{cvx}, PENLAB was harder to deploy and did not perform well in our test problems, possibly because
the augmented Lagrangian method implemented in PENLAB cannot efficiently exploit convexity.

\subsection{Compressed modes for variational problems}

In paper \cite{OzolinsLaiCaflischOsher}, the following \textit{nonsmooth} optimization problem for computing spatially
localized (``sparse'') solutions, called compressed modes, to a class of problems in mathematical physics was proposed:
\begin{equation} \label{prob:CompressedModes}
\begin{split}
  &\minimize_{\Psi \in \mathbb{R}^{N \times d}} \enspace f_0(\Psi) = \trace(\Psi^T A \Psi) + \nu \| \Psi \|_1 \\
  &\mathrm{subject~to} \enspace \Psi^T \Psi = I_d,
\end{split}
\end{equation}
Here columns of the matrix $\Psi$ are discretized compressed modes, $\Psi^T$ is the transpose of $\Psi$, $N$ is 
the number of discretization nodes, $\nu \ge 0$ is a parameter influencing the sparsity of solutions, 
$\| \Psi \|_1 = \sum_{i = 1}^N \sum_{j = 1}^d |\Psi_{ij}|$, $\trace(\cdot)$ is the trace of a square matrix, and $A$ is
the discretized Schr\"{o}dinger operator $- \frac{1}{2} \Delta + V(x)$, where $\Delta$ is the Laplace operator and
$V(x)$ is a potential.

Following \cite{OzolinsLaiCaflischOsher}, we consider problem \eqref{prob:CompressedModes} for the Kronig-Penney model
of a periodic one-dimensional crystal \cite{KronigPenney} on segment $[0, 50]$, in which the rectangular wells are
replaced by Gaussian potentials for the sake of simplicity, so that the potential is given by 
\[
  V(x) = - V_0 \sum_{j = 1}^{N_{el}} \exp\Big( - \frac{(x - y_j)^2}{2 \delta^2} \Big) \quad \forall x \in [0, 50].
\]
We chose the same values $V_0 = 1$, $N_{el} = 5$, $\delta = 3$, and $y_j = 10 j$ as in \cite{OzolinsLaiCaflischOsher},
and also discretized the segment $[0, 50]$ with $N = 128$ equally spaced nodes 
$x_1 = 0, x_2 = \sigma, x_3 = 2 \sigma, \ldots, x_{128} = 50$ as was done in \cite{OzolinsLaiCaflischOsher} (here 
$\sigma = 50/127$ is the length of the discretization interval). We also set $d = 5$, which makes the dimension of 
the problem equal to $640$.

For our choice of parameters, the matrix $A$ of the discretized Schr\"{o}dinger operator is neither positive nor
negative semidefinite. However, it can be easily rewritten as the difference of two positive semidefinite matrices
$A = A_{\Delta} - A_V$ with
\[
  A_{\Delta} = \frac{1}{2 \sigma^2} \begin{pmatrix}
    2 & -1 & 0 & 0 & \cdots & 0 & 0 & 0 & -1
    \\
    -1 & 2 & -1 & 0 & \cdots & 0 & 0 & 0 & 0
    \\
    \vdots & \vdots & \vdots & \vdots & \ddots & \vdots & \vdots & \vdots & \vdots
    \\
    0 & 0 & 0 & 0 & \cdots & 0 & -1 & 2 & -1
    \\
    -1 & 0 & 0 & 0 & \cdots & 0 & 0 & -1 & 2
  \end{pmatrix}
\]
and $A_V = \diag(-V(x_1), \ldots, -V(x_N))$, where $A_{\Delta}$ is the matrix of the discretized operator 
$- \frac{1}{2} \Delta$ with periodic boundary conditions and $\diag(\cdot)$ is the diagonal matrix. Thus, one can use
the DC decomposition of the objective function $f_0(\Psi) = g_0(\Psi) - h_0(\Psi)$ with
\[
  g_0(\Psi) = \trace(\Psi^T A_{\Delta} \Psi) + \nu \| \Psi \|_1, \quad h_0(\Psi) = \trace(\Psi^T A_V \Psi).
\]
Finally, following Example~\ref{ex:StiefelManifold}, the orthogonality constraint $\Psi^T \Psi = I_d$ was rewritten as
two semidefinite constraints
\[
  G(\Psi) \preceq \mathbb{O}_d, \quad - G(\Psi) \preceq \mathbb{O}_d, \quad
  G(\Psi) := \Psi^T \Psi - I_d,
\]
with convex mapping $G$ (it should be mentioned that this strategy precludes constraint qualification
\eqref{eq:SlaterCondition_Penalty_LimPoint} from holding). Here $\mathbb{O}_d$ is the zero matrix of order $d$ and
$\preceq$ is the L\"{o}wner partial order on the space $\mathbb{S}^d$ of all real symmetric matrices of order 
$d \in \mathbb{N}$, i.e. $A \preceq B$ for $A, B \in \mathbb{S}^d$ if and only if $B - A$ is positive semidefinite.

\begin{remark}
It should be noted that since \texttt{cvx} package cannot solve problems with nonlinear semidefinite constraints
(even if they are convex), we transformed the nonlinear convex constraint $G(\Psi) \preceq s$ with 
$s \succeq \mathbb{O}_d$, arising on Step~2 of Algorithmic Pattern~\ref{alg:CCP_penalty}, into the following equivalent
\textit{linear} matrix inequality
\[
  \begin{pmatrix} I_d + s & \Psi^T \\ \Psi & I_N \end{pmatrix} \preceq \mathbb{O}_{d + N}
\]
with the use of the Schur Complement Lemma (see, e.g. \cite[Sect.~A.5.5]{BoydVandenberghe}).
\end{remark}

\begin{figure}[t] 
\includegraphics[width=0.5\textwidth]{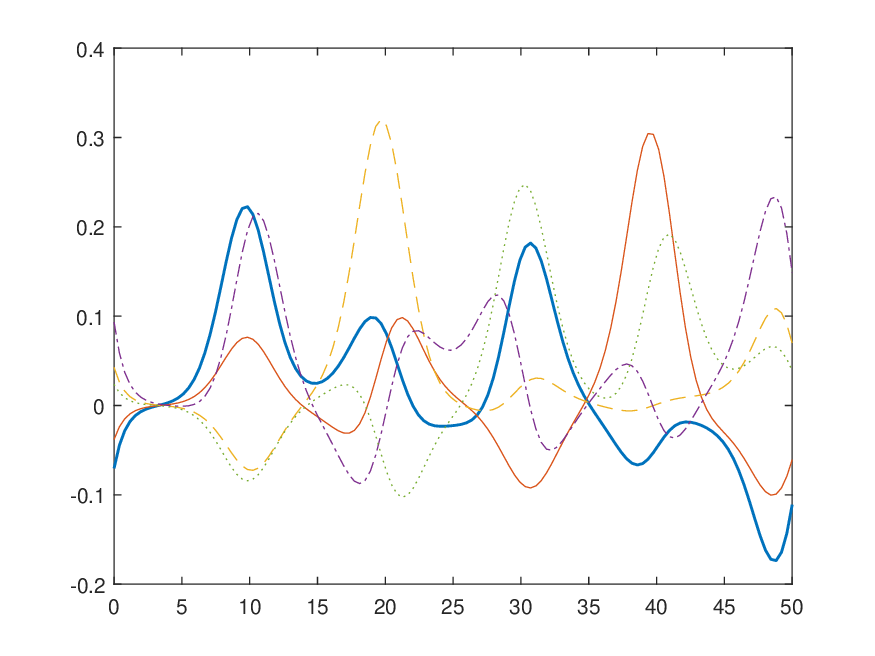}
\includegraphics[width=0.5\textwidth]{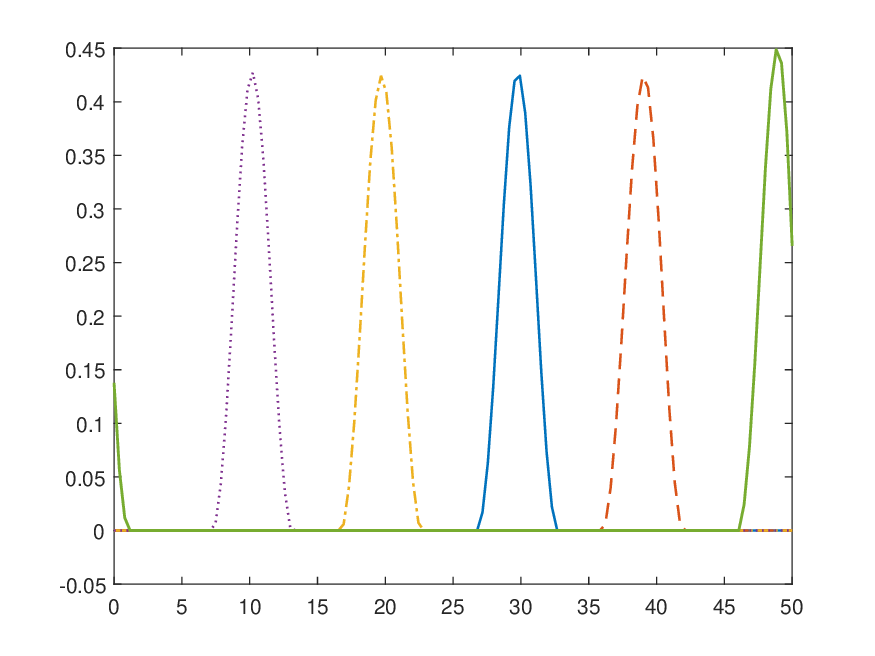}
\caption{The solutions of problem \eqref{prob:CompressedModes} (the columns of matrix $\Psi$) with 
$\nu = 0$ (left figure) and $\nu = 0.2$ (right figure) corresponding to the best value of the objective function
computed in our numerical experiments.}
\label{fig:CompressedModes}
\end{figure}

\begin{table} [ht!]
\caption{The results of numerical experiments for $10$ randomly generated starting points in the case $\nu = 0$.}
\begin{tabular}{| c | c | c | c | c |} 
  \hline
  {} & $time_{av}$ & $f_{best}$ & $n_{av}$ & $\tau_{av}$ \\
  \hline
  $n_{\min} = 0$ & 22.48 & -1.5919 & 9 & $10^6$ \\
  \hline
  $n_{\min} = 3$ & 30.76 & -3.816 & 11 & $10^5$ \\
  \hline
  $n_{\min} = 10$ & 53.91 & -4.1411 & 16.4 & $10^4$ \\
  \hline
\end{tabular}
\label{tab:CompressedModes_MuZero}
\end{table}

\begin{table} [ht!]
\caption{The results of numerical experiments for $10$ randomly generated starting points in the case $\nu = 0.2$.}
\begin{tabular}{| c | c | c | c | c |} 
  \hline
  {} & $time_{av}$ & $f_{best}$ & $n_{av}$ & $\tau_{av}$ \\
  \hline
  $n_{\min} = 0$ & 20.48 & 7.3279 & 9 & $10^6$ \\
  \hline
  $n_{\min} = 3$ & 234.51 & 0.3624 & 100 & $10^3$ \\
  \hline
  $n_{\min} = 10$ & 77.02 & -0.646 & 32.1 & 46 \\
  \hline
\end{tabular}
\label{tab:CompressedModes_MuPoint2}
\end{table}

We applied Algorithmic Pattern~\ref{alg:CCP_penalty} to problem \eqref{prob:CompressedModes} with two different values
of parameter $\nu$: $\nu = 0$ and $\nu = 0.2$. The method was initialized at 10 different starting points, randomly
generated with the use of the standard \textsc{Matlab} routine \texttt{rand}. The results of numerical experiments  
for each value of $\nu \in \{ 0, 0.2 \}$ and each $n_{\min} \in \{ 0, 3, 10 \}$ (see \eqref{eq:ModifiedPenaltyUpdate}) 
are given in Tables~\ref{tab:CompressedModes_MuZero} and \ref{tab:CompressedModes_MuPoint2}. We denote by $time_{av}$
the average run-time of the method in seconds, $f_{best}$ is the smallest value of the objective function for the given
choice of parameters, $n_{av}$ is the average number of iterations of the method, and $\tau_{av}$ is the average value
of the penalty parameter. The critical points of problem \eqref{prob:CompressedModes} with $\nu = 0$ and $\nu = 0.2$
corresponding to the best value of the objective function computed in our numerical experiments are depicted on
Fig.~\ref{fig:CompressedModes}, which clearly demonstrates the effect of parameter $\nu$ on the sparsity of solutions.

Let us note that in all cases for $\nu = 0.2$ and in most cases for $\nu = 0$, the best overall value of the objective
function for $n_{\min} = 0$ was greater than the values of the objective function at the points satisfying the stopping
criterion for $n_{\min} = 3$ (that is, the ``worst'' value for $n_{\min} = 3$ was better than the best value for 
$n_{\min} = 0$). Similarly, the best overall value of the objective function for $n_{\min} = 3$ was greater than the
values of the objective function at the points satisfying the stopping criterion in the case $n_{\min} = 10$. Thus, our
numerical experiments showed that an increase of the parameter $n_{\min}$ (i.e. the number of iterations during which
the method does not update the penalty parameter) allows the method to find a critical point with a better value of
the objective function, in accordance with the observation made in Example~\ref{ex:AnalyticalComputDCA}. Moreover, an
increase of $n_{\min}$ also reduces the final value of the penalty parameter used by the method. However, an increase of
$n_{\min}$ also increases computation time.

\subsection{Sphere packing on Grassmannian}

Recall that the real Grassmann manifold (or Grassmannian) is the smooth manifold of $k$-dimensional linear subspaces of
$\mathbb{R}^{\ell}$ for any given $k, \ell \in \mathbb{N}$ such that $k \le \ell$. Identifying a subspace with the
orhogonal projector onto this subspace, we can identify the Grassmannian with the set
\[
  \mathbf{Gr}(\ell, k) = \Big\{ P \in \mathbb{S}^{\ell} \Bigm| P^2 = P, \enspace \trace(P) = k \Big\}.
\]
The \textit{chordal distance} $\dist(P_1, P_2)$ on $\mathbf{Gr}(\ell, k)$ is defined by 
$\dist(P_1, P_2) = \sqrt{2} \| P_1 - P_2 \|_F$ (see, e.g. \cite{DirrHelmkeLageman,ConwayHardinSloane}).

The sphere packing problem on Grassmannian consists in finding $m \in \mathbb{N}$, $m > 1$, identical, non-overlapping
balls 
\[
  B_r(P_i) = \big\{ P \in \mathbf{Gr}(\ell, k) \mid \dist(P_i, P) < r \big\}, \quad
  P_i \in \mathbf{Gr}(\ell, k), \quad i \in \mathcal{I},
\]
where $\mathcal{I} = \{ 1, \ldots, m \}$, such that their radius is maximized. This problem can be formulated as the
following nonsmooth optimization problem:
\[
  \maximize_{(P_1, \ldots, P_m)} \min_{1 \le i < j \le m} \| P_i - P_j \|_F
  \quad \text{subject to} \quad P_i \in \textbf{Gr}(\ell, k), \quad i \in \mathcal{I}.
\]
To apply Algorithmic Pattern~\ref{alg:CCP_penalty} to this problem, we rewrite it as the following equivalent
minimization problem:
\begin{equation} \label{prob:SpherePacking}
\begin{split}
  &\minimize_{(P_1, \ldots, P_m) \in X} \enspace 
  f_0(P_1, \ldots, P_m) := \max_{1 \le i < j \le m} \Big( - \| P_i - P_j \|_F \Big)
  \\
  &\mathrm{s.t.} \enspace G_i(P_i) \preceq \mathbb{O}_{\ell}, \quad - G_i(P_i) \preceq \mathbb{O}_{\ell},
  \quad i \in \mathcal{I}, \quad (P_1, \ldots, P_m) \in Q, 
\end{split}
\end{equation}
where $X$ is the Cartesian product of $m$ copies of $\mathbb{S}^{\ell}$, $G_i(P_i) = P_i^2 - P_i$, and
\[
  Q = \Bigm\{ (P_1, \ldots, P_m) \in X \Bigm| \trace(P_i) = k, \: i \in \mathcal{I} \Big\}.
\]
One can readily check that the matrix-valued mappings $G_i$ are convex (in the order-theoretic sense), while the
objective function $f_0$ is DC and one can use the DC decomposition $f_0 = g_0 - h_0$ of this function with
\begin{align*}
  g_0(P_1, \ldots, P_m) &= \max_{1 \le i < j \le m} \big( h_0(P_1, \ldots, P_m) - \| P_i - P_j \|_F \big), 
  \\
  h_0(P_1, \ldots, P_m) &= \sum_{1 \le i < j \le m} \| P_i - P_j \|_F.
\end{align*}
Note, however, that even for relatively small $m$ the computation of values of the functions $g_0$ and $h_0$, as well
as their subgradients, is very expensive. Therefore, we also considered the following equivalent reformulation of
problem \eqref{prob:SpherePacking}, in which the max-function is replaced by the corresponding inequality constraints:
\begin{equation} \label{prob:SpherePackingInequal}
\begin{split}
  &\minimize_{(P_1, \ldots, P_m, r) \in X \times \mathbb{R}} \enspace r
  \quad \mathrm{subject~to} \enspace - h_{ij}(P_i, P_j, r) \le 0, \quad 1 \le i < j \le m,
  \\
  &G_i(P_i) \preceq \mathbb{O}_{\ell}, \quad - G_i(P_i) \preceq \mathbb{O}_{\ell},
  \quad i \in \mathcal{I}, \quad (P_1, \ldots, P_m) \in Q, 
\end{split}
\end{equation}
where $h_{ij}(P_i, P_j, r) := \| P_i - P_j \| + r$. Note that $h_{ij}$ are convex functions.

\begin{remark}
As in the case of the problem of computing compressed modes for variational problems, we transformed the nonlinear
convex constraints $G_i(P_i) \preceq s$ with $s \succeq \mathbb{O}_{\ell}$, arising on Step~2 of
Algorithmic Pattern~\ref{alg:CCP_penalty}, into the following equivalent linear matrix inequalities
\[
  \begin{pmatrix} P_i + s & P_i \\ P_i & I_{\ell} \end{pmatrix} \preceq \mathbb{O}_{2 \ell}
\]
with the use of the Schur Complement Lemma.
\end{remark}

\begin{table} [ht!]
\caption{The results of numerical experiments in the case $\ell = 10$, $k = 2$, and $m = 4$. Here $n$ is the number of
iterations and $\tau$ is the value of the penalty parameter.}
\begin{tabular}{| c | c | c | c | c |} 
  \hline
  {} & $time$ & $f_{best}$ & $n$ & $\tau$ \\
  \hline
  problem \eqref{prob:SpherePacking}, $n_{\min} = 0$ & 61.1 & -1.4044 & 9 & $10^5$ \\
  \hline
  problem \eqref{prob:SpherePackingInequal}, $n_{\min} = 0$ & 37.1 & -1.4036 & 7 & $10^5$ \\
  \hline
  problem \eqref{prob:SpherePacking}, $n_{\min} = 3$ & 58.1 & -1.4191 & 7 & 100 \\
  \hline
  problem \eqref{prob:SpherePackingInequal}, $n_{\min} = 3$ & 44.1 & -1.4186 & 8 & 1000 \\
  \hline
  problem \eqref{prob:SpherePacking}, $n_{\min} = 10$ & 106.9 & -1.5158 & 14 & 100 \\
  \hline
  problem \eqref{prob:SpherePackingInequal}, $n_{\min} = 10$ & 74.5 & -1.524 & 14 & 100 \\
  \hline
\end{tabular}
\label{tab:SpherePacking_Example}
\end{table}

\begin{table} [ht!]
\caption{The results of numerical experiments for the sphere packing problem on $\mathbf{Gr}(2k, k)$ with $m = 10$.}
\begin{tabular}{| c | c | c | c | c | c | c | c | c | c | c |} 
  \hline
  k & 3 & 4 & 5 & 6 & 7 & 8 & 9 & 10 & 11 & 12 \\
  \hline
  $time_{av}$ & 16.8 & 15.7 & 21.6 & 34.9 & 55.4 & 94.2 & 133.9 & 205.2 & 303.6 & 445.9 \\
  \hline
  $f_{best}$ & -1.5497 & -1.8672 & -2.1487 & -2.3887 & -2.6208 & -2.8084 & -3.007 & -3.1747 & -3.3497 & -3.4954 \\
  \hline
\end{tabular}
\label{tab:SpherePacking_Best}
\end{table}

We applied Algorithmic Pattern~\ref{alg:CCP_penalty} with the same randomly generated starting point and three
different values of $n_{\min} \in \{ 0, 3, 10 \}$ to two different problem formulations in order to determine the most
efficient way to apply Algorithmic Pattern~\ref{alg:CCP_penalty} to the sphere packing problem on Grassmannian. 
The results of a large number of numerical experiments showed that, when initialized at the same point, Algorithmic
Pattern~\ref{alg:CCP_penalty} in virtually all cases either (i) converges to the point with the same (up to the
tolerance specified in the stopping criterion) value of the objective function for both problem formulations or (ii) in
the case of the second problem formulation finds a point with the better value of the objective function than in the
case of the first one. Moreover, the run-time of Algorithmic Pattern~\ref{alg:CCP_penalty} for problem
\eqref{prob:SpherePackingInequal} was significantly smaller than its run-time for problem \eqref{prob:SpherePacking} and
the difference between the run-times grows very rapidly as $m$ increases. The results of numerical experiments in the
case $\ell = 10$, $k = 2$, and $m = 4$ (the dimension of the problem in this case is $220$), which are qualitatively the
same as results of numerical experiments for other values of parameters, are presented in
Table~\ref{tab:SpherePacking_Example}.

It should be noted that as in the case of the problem of computing compressed modes, the results of numerical
experiments showed that an increase of $n_{\min}$ always allows the method to find a critical point with the better
value of the objective function and reduces the value of the penalty parameter needed to find a point satisfying the
constraints up to a prespecified tolerance. However, an increase of $n_{\min}$ also increases the run-time of the
method.

Finally, to test the overall efficiency of the method, we applied Algorithmic Pattern~\ref{alg:CCP_penalty} with
$n_{\min} = 10$ to problem \eqref{prob:SpherePackingInequal} with $\ell = 2k$, $k \in \{ 3, 4, \ldots, 12 \}$, and 
$m = 10$. In all cases the penalty parameter was increased only once (i.e. the final value $t_* = 10 I_k$) and 
the method required between 10 and 15 iterations to find a point satisfying the stopping criterion. The results of
numerical experiments for $10$ randomly generated starting points are given in Table~\ref{tab:SpherePacking_Best}. 

\begin{remark}
{(i)~Let us note that the best value of the objective function for the sphere packing problem on $\mathbf{Gr}(16, 8)$
with $m = 10$ computed in our experiments is significantly better than the one given in
\cite[Fig.~2]{DirrHelmkeLageman}.
}

\noindent{(ii)~Let us point out that since the space $\mathbb{S}^{\ell}$ can be identified with 
$\mathbb{R}^{(\ell + 1)\ell / 2}$, the dimension of problem \eqref{prob:SpherePackingInequal} is equal to
$m (\ell + 1) \ell / 2 + 1$. In particular, in the case $\ell = 2 k$, $k = 12$, and $m = 10$, corresponding to the
largest problem that we solved, the dimension of problem \eqref{prob:SpherePackingInequal} is equal to $3001$. Note also
that the dimension of the convex subproblem on Step~2 of Algorithmic Pattern~\ref{alg:CCP_penalty} in this case is equal
to $9001$ due to the presence of the variables $s$ that are added to the right-hand side of matrix inequality
constraints in accordance with the theoretical scheme of Algorithmic Pattern~\ref{alg:CCP_penalty}.
}
\end{remark}

\section{Conclusions}

In this paper we developed a general theory of cone constrained DC optimization problems. The first part of the paper
was devoted to analysis of DC semidefinite programming problems. We studied two definition of DC matrix-valued mappings
(abstract and componentwise) and their interconnections. We proved that any DC matrix-valued map is componentwise DC and
demonstrated how one can compute a DC decomposition of several nonlinear semidefinite constraints appearing in
applications. We also constructed a DC decomposition of the maximal eigenvalue function, which allows one to apply
standard results and methods of inequality constrained DC optimization to problems with smooth and nonsmooth
componentwise DC semidefinite constraints. In the case of general cone constrained DC optimization problems, we obtained
local optimality conditions.

In the second part of the paper, we presented a detailed convergence analysis of the DCA for cone constrained DC
programs and its penalized version proposed in \cite{LippBoyd} (see also \cite{LeThiPhamDinh2014,LeThiPhamDinh2014b})
under the assumption that the concave part of the constraints is smooth. In particular, we obtained sufficient
conditions for the exactness of the penalty subproblem of the penalized version of the method and analyzed two types of
sufficient conditions for the convergence of this method to a feasible and critical point of a cone constrained DC
optimization problem from an infeasible starting point. The first type of sufficient conditions is the so-called a
priori conditions, which are based on general assumptions on the problem under consideration, while the second type is
the a posteriori conditions, which rely on some assumptions on a limit point of a sequence generated by an optimization
method. 

We also presented a simple example demonstrating that even if a feasible starting point is known, it might be reasonable
to use the penalized version of the method, since it is sometimes capable of finding deeper local minimum than the
standard method. 

In Section~\ref{sect:NumerExperiments}, applications of the exact penalty DCA to the problem of computing compressed
modes for variational problems and the sphere packing problem on Grassmannian were presented. The results of numerical
experiments confirmed the observations about the method made in Example~\ref{ex:AnalyticalComputDCA}. In particular,
they showed that it is advisable to let the exact penalty DCA to perform a certain number of iterations without
updating the penalty parameter to enable it to find a critical point with the better value of the objective function
(see the appendix).

The main results of our study pave the way for applications of DC optimization methods to various nonlinear
semidefinite programming problems and other nonconvex cone constrained optimization problems (such as nonconvex second
order cone and semi-infinite programming problems), as well as some nonlinear and nonsmooth optimization problems on
Riemannian manifolds (e.g. the Stiefel and the Grassmann manifolds).

%%%
%	Bibliography
%%%

\bibliographystyle{abbrv}  
\bibliography{Dolgopolik_bibl}

\section*{Appendix. A modification of Algorithmic Pattern~\ref{alg:CCP_penalty}}

The results of our numerical experiments clearly demonstrate that the increase of the number $n_{\min}$ of iterations
during which the method does not update the penalty parameter allows the exact penalty DCA to find a deeper local
minimum or a critical point with the better value of the objective function. Pushing this idea to the extreme and
following the theoretical scheme of primal-dual penalty methods \cite{BurachikKayaPrice,Dolgopolik_MultidimPen}, one can
propose the following modification of Algorithmic Pattern~\ref{alg:CCP_penalty} corresponding to the case 
$n_{\min} = \infty$, whose general scheme is given in Algorithmic Pattern~\ref{alg:PrimalDual}.

\begin{algorithm}[ht!]	\label{alg:PrimalDual}
\caption{Primal-dual penalty DCA}

\noindent\textbf{Initialization.} {Choose an initial point $x_0 \in Q$, penalty parameter $t_0 \succ_{K^*} 0$, the
maximal norm of the penalty parameter $\tau_{\max} > 0$, $\mu > 1$, infeasibility tolerance $\varkappa \ge 0$, and set
$n := 0$ and $k := 0$.}

\noindent\textbf{Step~1.} {Compute $v_n \in \partial h_0(x_n)$ and $D H(x_n)$.}

\noindent\textbf{Step~2.} {Set the value of $(x_{n + 1}, s_{n + 1})$ to a solution of the convex problem
\begin{align*}
  &\minimize_{(x, s)} \enspace g_0(x) - \langle v_n, x \rangle + \langle t_k, s \rangle \\
  &\text{subject to} \enspace G(x) - H(x_n) - D H(x_n)(x - x_n) \preceq_K s, \quad s \succeq_K 0, \quad x \in Q.
\end{align*}
If a stopping criterion is satisfied, put $y_k := x_{n + 1}$ and go to \textbf{Step 3}. Otherwise, put $n := n + 1$ and
go to \textbf{Step 1}.
}

\noindent\textbf{Step~3.} {If $\| s_{n + 1} \| \le \varkappa$ or $\mu \| t_k \| \ge \tau_{\max}$, \textbf{Stop}.
Otherwise, define $t_{k + 1} = \mu t_k$, put $x_0 := y_k$, $n := 0$, $k := k + 1$, and go to \textbf{Step~1}.
}
\end{algorithm}

The idea of the modified method consists in applying Algorithmic Pattern~\ref{alg:CCP_penalty} with a fixed value of the
penalty parameter and updating the penalty parameter only after the method has found a point satisfying a stopping
criterion, which is similar in spirit to primal-dual augmented Lagrangian methods. The results of numerical experiments
presented in Section~\ref{sect:NumerExperiments} indicate that it is likely that Algorithmic
Pattern~\ref{alg:PrimalDual} significantly outperforms Algorithmic Pattern~\ref{alg:CCP_penalty} in terms of the quality
of computed local solutions (i.e. it is likely to be able to find critical points with better values of the objective
function than Algorithmic Pattern~\ref{alg:CCP_penalty}), but at the cost of much greater run-time.

It seems possible to extend the convergence analysis of Algorithmic Pattern~\ref{alg:CCP_penalty} presented in this
paper to the case of Algorithmic Pattern~\ref{alg:PrimalDual}. Such an extension, as well as numerical evaluation of
Algorithmic Pattern~\ref{alg:PrimalDual}, is an interesting subject for future research.

\end{document}